\numberwithin{equation}{section}
\newcommand{\CC}{\mathbb{C}}
\newcommand{\RR}{\mathbb{R}}
\newcommand{\ZZ}{\mathbb{Z}}
\newtheorem{prop}{Proposition}[section]
\newtheorem{theo}[prop]{Theorem}
\newtheorem{lemm}[prop]{Lemma}
\newtheorem{coro}[prop]{Corollary}
\newtheorem{defi}[prop]{Definition}
\newtheorem{conj}[prop]{Conjecture}
\newcommand{\la}{\langle}
\newcommand{\ra}{\rangle}
\newcommand{\nin}{\setminus}
\newcommand{\pa}{\partial}
\newcommand{\vv}{\vert\vert}
\newcommand{\ep}{\epsilon}
\newcommand{\lam}{\lambda}
\newcommand{\vol}{\text{vol}}
\title{An Asymptotic Faber-Krahn Inequality for the Combinatorial Laplacian on $\ZZ^2$}
\author{Yakov Shlapentokh-Rothman}
\begin{document}

\maketitle
\section{Introduction}
    \subsection{Statement of Theorem and Outline of Proof}

    Spectral graph theory studies graphs by mimicking ideas and techniques from the spectral theory of the Laplacian and other elliptic differential operators. For any graph, a real symmetric ``combinatorial Laplacian'' matrix is defined, and one relates the eigenvalues to properties of the graphs. One source of problems concerns taking theorems in spectral geometry and examining the extent to which their analogous statements hold in spectral graph theory.

    The Faber-Krahn inequality is a natural candidate for this program, and much work has been done exploring similar statements for graphs. The reader who is unfamiliar with the Faber-Krahn inequality and/or the spectral theory of the Laplacian might wish to first read the next section where a brief summary is given.

    We will consider a discrete asymptotic Faber-Krahn inequality for the combinatorial Laplcian on subgraphs of $\ZZ^2$. Informally speaking, we will show that as the areas of subgraphs go to infinity, the subgraphs with minimum first Dirichlet eigenvalue became ``circular.'' Before making a precise statement, we need some definitions: Suppose that $G$ is a finite subgraph of $\ZZ^2$. We will always assume that these subgraphs are induced, i.e. if any edge in $\ZZ^2$ connects two vertices of $G$, then this edge is in $G$. We denote the number of vertices by $|G|$. Number the vertices of $G$ arbitrarily from $1$, $\cdots$, $|G|$. If $i$ and $j$ are connected by an edge, we write $i \sim j$. Then the adjacency matrix is the $|G|\times |G|$ matrix $A$ defined by
    \[A_{ii} \equiv 0\]
    and for $i \neq j$
    \begin{equation*}
    A_{ij} \equiv A_{ji} \equiv \left\{
    \begin{array}{rl}
    1 & \text{if } i \sim j\\
    0 & \text{if } i \not\sim j
    \end{array} \right.
    \end{equation*}
    The combinatorial Dirichlet Laplacian on $\ZZ^2$ is the $|G|\times |G|$ matrix given by
    \[L_D \equiv 4I - A\]
    where $I$ denotes the $|G|\times |G|$ identity matrix. Motivation for this definition is provided in later sections. This is a real symmetric matrix and hence has $|G|$ real eigenvalues. We will show later that these do not depend on the ordering of the vertices. The lowest eigenvalue is denoted $\lam_D(G)$. We define

    \begin{equation*}
    \lam_D^{(n)} \equiv \inf_{G\subset \ZZ^2 \text{ with }|G| = n}\lam_D(G)
    \end{equation*}

    Later, a simple argument will show that this infimum is achieved for each $n$. Any subgraph $G_n$ whit $n$ vertices and $\lam_D(G_n) = \lam_D^{(n)}$ will be referred to as a ``minimizing subgraph.'' The goal of this paper is characterize the ``shape'' of minimizing subgraphs as $n\to \infty$. To make precise statements about the shape of a subgraph, it is useful to associate the subgraph with a domain in $\RR^2$ by taking the interior of the union of closed unit squares centered at each vertex. Here is an example of a subgraph along with the squares which constitute its associated domain.
    \begin{center}
    \begin{tikzpicture}
        [interior/.style={circle,draw=black,fill=black, inner sep=0pt,minimum size = 2.5mm},
        boundary/.style={circle,draw=black,fill=black!60, inner sep=0pt,minimum size = 2.5mm},
        exterior/.style={circle,draw=black,fill=white, inner sep=0pt,minimum size = 2.5mm},
        highlight/.style = {circle,draw= red, fill = red, inner sep=0pt, minimum size = 2.5mm}]
        \draw[step=.5cm] (-1.49,-1.49) grid (1.49,1.49);
        \draw (0,1.5) node {y};
        \draw (1.5,0) node {x};
        \draw (0,0) node [interior]{};
        \draw (0,0) node[inner xsep =.25cm,inner ysep = .25cm, draw] {};
        \draw (.5,0) node [interior]{};
        \draw (.5,0) node[inner xsep =.25cm,inner ysep = .25cm, draw] {};
        \draw (-.5,0) node [interior]{};
        \draw (-.5,0) node[inner xsep =.25cm,inner ysep = .25cm, draw] {};
        \draw (0,-.5) node [interior]{};
        \draw (0,-.5) node[inner xsep =.25cm,inner ysep = .25cm, draw] {};
        \draw (0,.5) node [interior]{};
        \draw (0,.5) node[inner xsep =.25cm,inner ysep = .25cm, draw] {};
        \draw (.5,.5) node [interior]{};
        \draw (.5,.5) node[inner xsep =.25cm,inner ysep = .25cm, draw] {};
    \end{tikzpicture}
\end{center}
    For a subgraph $G$, this associated domain is denoted by $\mathbf{G}$. Then we set
    \[\mathbf{G}^* \equiv \frac{1}{\sqrt{|G|}}\mathbf{G}\]
    This scales $\mathbf{G}$ so that it has area $1$. Now we are ready to state our main result:
    \begin{theo}\label{mainTheo}Let $\{G_n\}$ be any sequence of subgraphs in $\ZZ^2$ such that $|G_n| = n$ and $\lam_D(G_n) = \lam_D^{(n)}$. Let $D \subset \RR^2$ denote the unit disk. Then, after possibly translating the $G_n$, the measure of the symmetric difference of $\mathbf{G_n}^*$ and $D$ converges to $0$ as $n\to\infty$.
    \end{theo}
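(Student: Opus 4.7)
The plan is to reduce Theorem~\ref{mainTheo} to the classical Faber--Krahn inequality in $\RR^2$ via a discrete-to-continuous limit procedure. Since $L_D = 4I - A$ is the standard five-point stencil discretization of $-\Delta$ at meshsize $1$, one expects, for a ``nice'' subgraph $G$, that $\lam_D(G) \approx \lam_1(\mathbf{G}) = |G|\upmo\lam_1(\mathbf{G}^*)$, where $\lam_1$ denotes the first Dirichlet eigenvalue of the continuous Laplacian on a planar domain. Theorem~\ref{mainTheo} should therefore follow from establishing that $n\lam_D^{(n)} \to \lam_1(D)$ (where from here on I interpret $D$ as the disk of area $1$) and that minimizing subgraphs saturate continuous Faber--Krahn in the limit.

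For the upper bound $\limsup n\lam_D^{(n)} \leq \lam_1(D)$, I would take $H_n$ to be the lattice points lying in a Euclidean disk of radius $R_n \sim \sqrt{n/\pi}$, chosen so that $|H_n| = n$. Using the first Dirichlet eigenfunction $\phi_1$ of the appropriately rescaled disk, restricted to the vertices of $H_n$, as a test vector for the Rayleigh quotient $\la L_D \cdot, \cdot\ra / \la \cdot, \cdot\ra$, standard Taylor-expansion estimates for the five-point Laplacian yield $n\lam_D(H_n) = \lam_1(D) + o(1)$.

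For the lower bound and shape convergence, fix a minimizing sequence $G_n$ and let $u_n \geq 0$ be a first eigenfunction. Extend $u_n$ by zero outside $G_n$, take a piecewise-linear interpolation on a lattice triangulation, and rescale to $\mathbf{G_n}^*$ to obtain a function $\tilde u_n \in H^1(\RR^2)$ normalized so that $\|\tilde u_n\|_{L^2} = 1$. A direct computation shows $\|\nabla \tilde u_n\|_{L^2}^2 \leq n\lam_D(G_n) + o(1) \leq \lam_1(D) + o(1)$. After translating the $G_n$ so that the centroid of $\tilde u_n^2$ sits at the origin, Rellich compactness produces a subsequence with $\tilde u_n \to u_\infty$ strongly in $L^2(\RR^2)$ and weakly in $H^1(\RR^2)$. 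The limit $u_\infty$ is supported in some measurable set $\Omega$ with $|\Omega| \leq 1$, lies in $H_0^1(\Omega)$, and passing to the limit in the eigenvalue equation identifies it as a Dirichlet eigenfunction of $-\Delta$ on $\Omega$ with eigenvalue $\lam_\infty \leq \lam_1(D)$. The classical Faber--Krahn inequality, applied to $\Omega$, then forces $|\Omega| = 1$, $\lam_\infty = \lam_1(D)$, and $\Omega$ a translate of $D$. The symmetric-difference convergence $|\mathbf{G_n}^* \triangle D| \to 0$ follows because both sets have area $1$, $\tilde u_n^2$ concentrates its mass on $\Omega = D$ in the limit, and $\tilde u_n$ is supported on (a slight thickening of) $\mathbf{G_n}^*$; alternatively one may invoke a quantitative Faber--Krahn stability theorem (as proved by Brasco--De Philippis--Velichkov and others) applied directly to $\mathbf{G_n}^*$.

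The main obstacle will be ruling out pathological geometries for the minimizers before passing to the limit: a priori the $G_n$ could have long thin filaments, several disconnected components of comparable size, or most of their mass collected on a length scale much smaller than $\sqrt{n}$, any of which would obstruct the Rellich extraction and the identification of the limit $\Omega$. Controlling this requires a priori bounds on the diameter and topology of $G_n$, most plausibly obtained by comparing $\lam_D(G_n)$ to the sharp eigenvalue of a digital disk and deducing that each such pathology would contradict optimality. A related technical nuisance is the discretization boundary layer of width $O(1/\sqrt{n})$ where $\tilde u_n$ does not vanish but the continuous limit must; showing that this layer contributes negligibly to both the numerator and denominator of the Rayleigh quotient is the quantitative heart of the discrete-to-continuous consistency step.
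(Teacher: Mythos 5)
Your overall architecture --- compare $\lam_D^{(n)}$ to a digital disk for the upper bound, transfer the discrete eigenvalue to a continuous one via interpolation of eigenfunctions, and finish with Faber--Krahn (stability) --- is the same as the paper's parts three and four (Theorem \ref{approxTheo}, Appendix I, and Melas' theorem). But there is a genuine gap at exactly the point you flag as ``the main obstacle'': you propose to rule out long thin filaments, diameter blow-up, and related pathologies ``by comparing $\lam_D(G_n)$ to the sharp eigenvalue of a digital disk and deducing that each such pathology would contradict optimality.'' This does not work. The first Dirichlet eigenvalue is insensitive to attaching a long thin tube of arbitrarily small area (and, discretely, of $o(n)$ vertices): a ball with such a tail has eigenvalue within $o(1/n)$ of the ball's, so near-optimality of $\lam_D(G_n)$ alone does not bound the diameter of $G_n$, control its perimeter, or force simple connectivity. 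The paper itself emphasizes this ``balls with long thin tails'' issue in its concluding remarks. Consequently your Rellich extraction (which needs uniformly bounded supports after translation), your identification of the limit set $\Omega$, and --- more importantly --- the quantitative ingredients needed to invoke a stability theorem for $\mathbf{G_n}^*$ (a perimeter bound so that the $O(1/\sqrt{n})$ boundary layer has vanishing area, a uniform bound on the continuous eigenvalues of the thickened domains, and simple connectivity if one uses Melas' version) are all left unsupported.

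The paper fills this gap with an entirely combinatorial mechanism that your proposal omits: discrete Steiner symmetrization in the horizontal, vertical, and diagonal directions, together with \emph{strict} decrease results (Theorems \ref{symmTheoStrict} and \ref{symmTheoWall}) showing that a minimizer must be strongly connected and ``walled in.'' From this one derives the diameter bound $D(\mathbf{G_n}) \leq C\sqrt{n}$ (Proposition \ref{diamBound}, which crucially uses the \emph{diagonal} symmetrization), simple connectivity of $\mathbf{G_n}$, and the perimeter control used in the final two lemmas. These symmetrization results are the heart of the proof and cannot be replaced by an eigenvalue-comparison argument. A secondary, smaller issue: deducing $|\mathbf{G_n}^* \triangle D| \to 0$ from strong $L^2$ convergence of the interpolated eigenfunctions is not immediate, since the support of the limiting eigenfunction does not control the measure of the approximating domains without additional uniform $L^\infty$ or capacity estimates; your alternative route through a quantitative Faber--Krahn stability theorem is the correct one, but it presupposes the two-sided eigenvalue comparison with controlled error terms, which again rests on the missing geometric bounds.
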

    Next we give an informal outline of the proof: For a measurable set $U$ in $\RR^n$, let $|U|$ denote the Lebesgue measure of $U$. For a bounded open set $\Omega \subset \RR^n$, let $\lam(\Omega)$ denote the first eigenvalue of the Laplacian as a differential operator with Dirichlet boundary conditions.

    The techniques used naturally divide the proof into four different parts. The first part is purely combinatorial and involves considerations of discrete versions of Steiner symmetrization. Steiner symmetrization takes an open domain $\Omega \subset \RR^n$ with smooth boundary and produces a new domain $\Omega^{\star}$ in the following fashion: For each $x = (x_1,\cdots,x_{n-1},0)$, let $l_x$ denote the line $y = x+te_n$, where $e_n$ denotes the $n$th standard basis vector. Then we partition $\Omega$ into ``slices,'' $S_x \equiv \Omega \cap l_x$.  To symmetrize, we replace each $S_x$ with an interval in $l_x$, symmetric about $\{x_n = 0\}$, and of size $|S_x|$. The symmetrization $\Omega^{\star}$ is the union of these centered intervals. This new domain is now symmetric about $\{x_n = 0\}$. Some key facts are $|\Omega^{\star}| = |\Omega|$ and $\lam(\Omega^{\star}) \leq \lam(\Omega)$. The second property opens the door for applications to eigenvalue minimization problems. Of course there is nothing special about the hyperplane $\{x_n = 0\}$. If we wish to symmetrize about another hyperplane $l$, we just change coordinates so that $l = \{x_n = 0\}$.

    We will consider two different types of discrete Steiner symmetrization. The first type will produce graphs that are ``almost symmetric'' with respect to either the $x$ or $y$ axis. The second type of symmetrization will produce graphs that are ``almost symmetric'' with respect to the lines $y=x$ or $y=-x$. Both procedures mimic regular Steiner symmetrization by partitioning the subgraph into ``slices'' and then modifying the slices to make them as symmetric as possible. We will show that neither form of symmetrization increases $\lam_D$. Furthermore, we will characterize certain subgraphs where symmetrization strictly lowers $\lam_D$. The upshot is that these subgraphs cannot be minimizing subgraphs.

    In the second part of the proof, we explore the geometry of minimizing subgraphs. The symmetrization results of the previous part successfully encapsulate much of the combinatorics. Hence, everything in the second section is a formal geometric consequences of facts from the first section. The most important result is that there exists $C > 0$ such that any minimizing graph on $n$ vertices is contained in a square with side length $C\sqrt{n}$. Equivalently, for any minimizing subgraph $G$, the diameter of $\mathbf{G}^*$ is less than $C'$ for some universal constant $C' > 0$. We will also establish that for any minimizing subgraph $G$, $\mathbf{G}$ is simply connected.

    The third part of the proof is analytical. For any domain $\Omega \subset \RR^2$ and $\ep > 0$, we define $B^{\ell_1}_{\ep}(\Omega)$ to be the interior of the set of all points with ${\ell}_1$\footnote{For $x \in \RR^n$, $\vv x\vv_{{\ell}_1} = \sum_{i=1}^n |x_i|$}distance less than $\ep$ to $\overline{\Omega}$. For subgraphs $G$ with $n$ vertices we will prove
    \begin{theo}\label{approxTheo}For some universal constant $C > 0$
    \[\frac{\lam\left(B^{\ell_1}_{2/\sqrt{n}}\left(\mathbf{G}^*\right)\right)}{n + C\lam\left(B^{\ell_1}_{2/\sqrt{n}}\left(\mathbf{G}^*\right)\right)} \leq \lam_D(G) \leq \frac{\lam\left(\mathbf{G}^*\right)}{n - C\lam\left(\mathbf{G}^*\right)}\]
    \end{theo}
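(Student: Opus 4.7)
The plan is to prove each inequality by constructing a trial function and bounding its Rayleigh quotient. I work at the unit scale of $\mathbf{G}$ and rescale at the end using $\lam(c \Omega) = c^{-2} \lam(\Omega)$; in particular $\lam(\mathbf{G}) = \lam(\mathbf{G}^*)/n$ and $\lam(B^{\ell_1}_2(\mathbf{G})) = \lam(B^{\ell_1}_{2/\sqrt n}(\mathbf{G}^*))/n$. I also use the elementary rewriting of the discrete Rayleigh quotient as $\lam_D(G) = \inf_f \sum_{\{i,j\}}(f(i)-f(j))^2/\sum_i f(i)^2$, where the sum in the numerator is over $\ZZ^2$-edges with at least one endpoint in $G$ and $f$ is extended by zero outside $G$. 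At the unit scale, the theorem is equivalent to $\lam(B^{\ell_1}_2(\mathbf G))/(1+C\lam(B^{\ell_1}_2(\mathbf G))) \leq \lam_D(G) \leq \lam(\mathbf G)/(1 - C \lam(\mathbf G))$.

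For the upper bound, let $u \in H^1_0(\mathbf G)$ be a first eigenfunction normalized by $\int u^2 = 1$ and extended by zero to $\RR^2$. After mollifying on a scale much smaller than $1$ (to define pointwise values and handle boundary regularity near corners of $\mathbf G$), set $f(i) = u(i)$ for $i \in G$; note $f \equiv 0$ off $G$ automatically because $i \notin G$ implies $i \notin \mathbf G$. For an edge $\{i,j\}$ with both endpoints in $G$, Cauchy--Schwarz along the unit segment $\overline{ij} \subset \mathbf G$ gives $(u(i)-u(j))^2 \leq \int_{\overline{ij}} |\partial_t u|^2 \, dt$; for an edge with $j \notin G$, the midpoint lies on $\partial \mathbf G$ (where $u=0$), so the same estimate on the half-segment applies. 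Summing and using Fubini bounds the numerator by $\int_{\mathbf G} |\nabla u|^2 = \lam(\mathbf G)$. For the denominator, expanding $u(x)^2 - u(i)^2 = 2u(i)(u(x)-u(i)) + (u(x)-u(i))^2$ on each $Q_i$ and using Cauchy--Schwarz together with a Poincar\'e inequality on the unit square gives $|\sum_i u(i)^2 - \int_{\mathbf G} u^2| \leq C \lam(\mathbf G)$. The resulting Rayleigh-quotient bound $\lam_D(G) \leq \lam(\mathbf G)/(1 - C\lam(\mathbf G))$ rescales to the claimed upper bound.

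For the lower bound, let $f$ be a normalized discrete first eigenfunction on $G$, extended by zero, and let $\tilde f$ be its bilinear interpolation on faces of the dual grid (each unit square whose four corners are adjacent lattice points). Then $\supp \tilde f$ is a union of faces each of which has some corner in $G$, which sits inside $B^{\ell_1}_1(\mathbf G) \subset B^{\ell_1}_2(\mathbf G)$, so $\tilde f \in H^1_0(B^{\ell_1}_2(\mathbf G))$. A direct calculation on a face $R$ with corner values $(f_{00},f_{10},f_{01},f_{11})$ gives $\int_R |\nabla \tilde f|^2 \leq \tfrac12 \sum_{\{i,j\} \text{ side of } R}(f(i)-f(j))^2$; since each edge is a side of at most two faces, summing yields $\int |\nabla \tilde f|^2 \leq \sum_{\{i,j\}}(f(i)-f(j))^2 = \lam_D(G)$. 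For the mass, the bilinear-element mass matrix on the unit square has $(1,1,1,1)$ as an eigenvector with eigenvalue $1/4$, and its defect from $\tfrac14 I$ is supported on the orthogonal complement of the constants, whose magnitude on each face is controlled by the squared side differences. Summing gives $|\int \tilde f^2 - \sum_i f(i)^2| \leq C \lam_D(G)$, hence $\int \tilde f^2 \geq 1 - C\lam_D(G)$. This yields $\lam(B^{\ell_1}_2(\mathbf G)) \leq \lam_D(G)/(1 - C\lam_D(G))$, which rearranges to the stated lower bound and rescales accordingly.

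I expect the main obstacle to be the mass estimate in the lower bound: a naive eigenvalue comparison of the bilinear mass matrix only gives $\int \tilde f^2 \geq c \sum_i f(i)^2$ with $c$ strictly less than $1$, which is far too weak. The crucial observation is that the mass defect lies entirely in the non-constant modes of the interpolant on each face, and these modes are precisely controlled by the edge differences that appear in $\lam_D(G)$, giving the required linear-in-$\lam_D$ error. Additional routine care is needed in the upper bound for the boundary treatment, where corners of the polygonal domain $\mathbf G$ may cause mild regularity issues for the continuous eigenfunction (handled by mollification) and where half-segments crossing $\partial \mathbf G$ use the trace vanishing of $u$.
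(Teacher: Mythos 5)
Your lower bound is essentially sound and is a mild but genuine variant of the paper's argument: the paper adds diagonal edges and uses piecewise \emph{linear} interpolation on the resulting triangulation, computes the element mass integrals exactly, and absorbs the diagonal differences into the axis-parallel ones by AM--GM, whereas you use bilinear interpolation on the lattice squares and observe that the mass defect of each element lives in the modes orthogonal to the constants and is therefore controlled by the squared side differences. Both routes give $\int \tilde f^2 \geq \sum_i f(i)^2 - C\sum_{i\sim j}(f(i)-f(j))^2$, which is exactly what is needed, and your remark that a naive eigenvalue comparison of the mass matrix is far too weak identifies the correct subtlety.

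The upper bound, however, has a genuine gap, located precisely where you chose point values $f(i)=u(i)$ instead of cell averages. First, the numerator: summing $(u(i)-u(j))^2\leq\int_{\overline{ij}}|\partial_t u|^2\,dt$ over all edges produces a sum of one-dimensional integrals of $|\nabla u|^2$ over the grid lines, a set of planar measure zero, and no Fubini argument bounds this sum by the two-dimensional integral $\int_{\mathbf G}|\nabla u|^2$; if $|\nabla u|^2$ concentrates near the grid lines, the sum of these traces exceeds the area integral by an arbitrary factor, and mollification does not repair this uniformly in the mollification parameter. Second, the denominator: with point values the cross term $2u(i)\int_{Q_i}\left(u-u(i)\right)$ does not vanish, and Cauchy--Schwarz over $i$ yields only an error of order $\bigl(\sum_i u(i)^2\bigr)^{1/2}\bigl(\sum_i\|u-u(i)\|_{L^2(Q_i)}^2\bigr)^{1/2}\sim\sqrt{\lam(\mathbf G)}$ (or, after Young's inequality, a loss of a multiplicative constant in front of $\sum_i u(i)^2$); either outcome ruins the required denominator $n-C\lam(\mathbf G^*)$, since after rescaling $\sqrt{\lam(\mathbf G^*)/n}\gg\lam(\mathbf G^*)/n$. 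Moreover, the Poincar\'e inequality on the unit square controls deviation from the \emph{mean} on $Q_i$, not from the center value, which for an $H^1$ function in two dimensions is not even defined without mollification. Both defects are cured simultaneously by the paper's choice $v(g)=n\int_{C_g}u$: the cross term then vanishes identically, Poincar\'e applies verbatim with constant $1/\pi^2$, and the difference $v(g+he_1)-v(g)$ becomes a two-dimensional weighted average of $\partial_x u$ against a bump function, so that Cauchy--Schwarz in two variables bounds the discrete energy by $\int|\nabla u|^2$; this is the content of Lemma \ref{annoyingLemm}. You should replace point evaluation by cell averaging throughout the upper bound.
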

    Fix some sequence of minimizing subgraphs $\{G_n\}$. In the final part of the proof we will establish the following two lemmas:
    \begin{lemm}The symmetric difference of $B^{\ell_1}_{2/\sqrt{n}}\left(\mathbf{G_n}^*\right)$ and $\mathbf{G_n}^*$ converges to $0$ as $n \to\infty$.
    \end{lemm}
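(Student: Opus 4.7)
The strategy is to bound the area of the symmetric difference by a Minkowski-content estimate involving the perimeter of $\mathbf{G_n}^*$, and then to control that perimeter using the structural information about minimizing subgraphs supplied by Parts~1 and~2. Since $\mathbf{G_n}^*\subset B^{\ell_1}_{2/\sqrt n}(\mathbf{G_n}^*)$, the symmetric difference equals the shell $B^{\ell_1}_{2/\sqrt n}(\mathbf{G_n}^*)\setminus \mathbf{G_n}^*$. Using $B^{\ell_1}_\ep\subset B^{\ell_2}_\ep$ and the fact that $\mathbf{G_n}^*$ has polygonal (hence rectifiable) boundary, a standard tube estimate gives
\[
\left|B^{\ell_1}_{2/\sqrt n}(\mathbf{G_n}^*)\setminus \mathbf{G_n}^*\right| \;\leq\; \frac{C_1}{\sqrt n}\,P(\mathbf{G_n}^*),
\]
where $P$ denotes ordinary perimeter in $\RR^2$. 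Since scaling by $1/\sqrt n$ divides perimeter by $\sqrt n$, the right-hand side equals $C_1\,P(\mathbf{G_n})/n$, and the lemma reduces to showing $P(\mathbf{G_n})=o(n)$.

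I would then establish the stronger bound $P(\mathbf{G_n})=O(\sqrt n)$ as follows. By the symmetrization results of Part~1, a minimizing $G_n$ must be, up to translation, a fixed point of each of the four discrete Steiner-type symmetrizations; otherwise the characterization of when symmetrization strictly lowers $\lam_D$ would contradict minimality. The fixed points of the horizontal and vertical symmetrizations are (up to translation) HV-convex polyominoes, i.e.\ those for which every row and every column is a single consecutive run of vertices. For an HV-convex polyomino contained in an $a\times b$ bounding box, the perimeter is exactly $2(a+b)$. Combining this with the Part~2 result that a minimizing $G_n$ fits inside a square of side $\leq C\sqrt n$ gives $P(\mathbf{G_n})\leq 4C\sqrt n$. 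Substituting into the shell estimate,
\[
\left|B^{\ell_1}_{2/\sqrt n}(\mathbf{G_n}^*)\setminus \mathbf{G_n}^*\right| \;\leq\; \frac{4CC_1}{\sqrt n}\;\longrightarrow\;0,
\]
which is the lemma.

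The main obstacle is justifying that the fixed-point characterization from Part~1 really forces exact HV-convexity --- rather than only some approximate version --- and that the conclusions from the two diagonal symmetrizations are compatible with this. If only ``almost'' HV-convexity is available, I would fall back on bounding the perimeter by summing, over each row and each column, the number of maximal runs of cells, and use the simple-connectedness of $\mathbf{G_n}$ (Part~2) together with the $O(\sqrt n)$ diameter bound to show that the total is still $o(n)$, which is all that the Minkowski-content reduction actually requires.
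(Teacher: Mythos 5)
Your proposal is correct and takes essentially the same route as the paper: the shell $B^{\ell_1}_{2/\sqrt n}(\mathbf{G_n}^*)\setminus\mathbf{G_n}^*$ is bounded by the perimeter times $O(1/\sqrt n)$, and the perimeter of $\mathbf{G_n}^*$ is $O(1)$ because every horizontal and vertical slice of a minimizer is a single run and the bounding box has side $O(\sqrt n)$. The ``main obstacle'' you flag is not one --- exact HV-convexity is precisely the paper's Proposition \ref{stronglyConnected} (a consequence of Theorem \ref{symmTheoStrict}, not of being a fixed point of symmetrization), so no approximate fallback is needed.
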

    \begin{lemm}The sequence $\left\{\lam\left(B^{\ell_1}_{2/\sqrt{n}}\left(\mathbf{G_n}^*\right)\right)\right\}$ is uniformly bounded.
    \end{lemm}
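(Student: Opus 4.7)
The plan is to combine the left-hand inequality of Theorem~\ref{approxTheo} with an a priori upper bound $\lam_D^{(n)} = O(1/n)$ obtained from a simple test subgraph. Setting $\mu_n \equiv \lam(B^{\ell_1}_{2/\sqrt{n}}(\mathbf{G_n}^*))$, Theorem~\ref{approxTheo} applied to $G_n$ reads
\[
\frac{\mu_n}{n + C\mu_n} \;\leq\; \lam_D(G_n) \;=\; \lam_D^{(n)},
\]
which rearranges to $\mu_n(1 - C\lam_D^{(n)}) \leq n\,\lam_D^{(n)}$. Once we know $\lam_D^{(n)} \leq C_0/n$ for some universal constant $C_0$, this immediately gives $\mu_n \leq 2C_0$ for all $n$ large enough that $CC_0/n < 1/2$, while the finitely many earlier terms of the sequence are trivially bounded. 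So the entire lemma reduces to establishing $\lam_D^{(n)} = O(1/n)$.

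For this, I would use the $k \times k$ square grid subgraph $H_k \subset \ZZ^2$. Its combinatorial Dirichlet Laplacian $L_D = 4I - A$ factors as the tensor sum of the $1$D combinatorial Laplacian on a path of length $k$, whose eigenvalues $2 - 2\cos(j\pi/(k+1))$ are classical. The minimum eigenvalue of $L_D$ on $H_k$ is therefore
\[
\lam_D(H_k) \;=\; 4 - 4\cos\!\Bigl(\frac{\pi}{k+1}\Bigr) \;\leq\; \frac{2\pi^2}{(k+1)^2}.
\]
In parallel, $\lam_D^{(n)}$ is nonincreasing in $n$: given any induced subgraph $G \subset \ZZ^2$ with $|G| = n$, we may attach a new lattice vertex $v$ adjacent to $G$ to form an induced subgraph $G' \supset G$ with $|G'| = n+1$; extending a first eigenfunction of $G$ by zero on $v$ gives a test function on $G'$ with the same Rayleigh quotient, so $\lam_D^{(n+1)} \leq \lam_D^{(n)}$. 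Choosing $k = \lfloor\sqrt{n}\rfloor$ so that $k^2 \leq n$ and $(k+1)^2 > n$, we conclude
\[
\lam_D^{(n)} \;\leq\; \lam_D^{(k^2)} \;\leq\; \lam_D(H_k) \;\leq\; \frac{2\pi^2}{(k+1)^2} \;<\; \frac{2\pi^2}{n},
\]
which furnishes the required bound with $C_0 = 2\pi^2$.

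I do not anticipate a serious obstacle: the substantial analytical work is already absorbed in Theorem~\ref{approxTheo}, and the remaining ingredients---the explicit spectrum of a finite path graph, weak monotonicity of $\lam_D^{(n)}$, and elementary algebra---are all standard. The one point of mild care is verifying that attaching the new vertex $v$ preserves the induced-subgraph property, which is immediate since the pre-existing edges of $G$ are by definition all the edges of $\ZZ^2$ between its vertices, so adding $v$ introduces no new edges inside $G$ and the extension-by-zero computation is valid.
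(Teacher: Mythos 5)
Your proof is correct, but it takes a genuinely different route from the paper. The paper reduces to bounding $\lam\left(\mathbf{G_n}^*\right)$ via domain monotonicity and then argues by contradiction using geometry: if $\lam\left(\mathbf{G_{n_k}}^*\right)\to\infty$, then (by the explicit first eigenvalue of a square) $\mathbf{G_{n_k}}^*$ eventually contains no square of side $\delta$; combining this with the walled-in property (Proposition \ref{walled-in}) and the uniform diameter bound (Proposition \ref{diamBound}) forces $1=|\mathbf{G_{n_k}}^*|\leq 4\delta\cdot D(\mathbf{G_{n_k}}^*)$, which is absurd as $\delta\to 0$. You instead run the lower bound of Theorem \ref{approxTheo} backwards against an a priori estimate $\lam_D^{(n)}=O(1/n)$, which you obtain from the exact spectrum of the $k\times k$ grid (tensor-sum of path Laplacians) together with monotonicity of $\lam_D^{(n)}$ --- the latter is already Proposition \ref{eigenDecrease}, which you could simply cite rather than reprove. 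Your grid computation sharpens the paper's Proposition \ref{eigenConverge}, which only gives $\lam_D^{(n)}=O(1/\sqrt{n})$ via the constant test function; the sharper $O(1/n)$ rate is essential to your argument and is morally what the paper's Appendix I (the discrete disks $D_n$) supplies in the final synthesis. The trade-off: your argument is quantitative, shorter, and bypasses the geometric structure theory of minimizing subgraphs entirely (using minimality only through $\lam_D(G_n)=\lam_D^{(n)}$), whereas the paper's argument avoids any exact spectral computation and stays at the level of the geometric results it has already established. Both are valid; there is no gap in yours.
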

    Assuming these lemmas we will use the following theorem of Melas from the end of \cite{n6} to piece everything together.
    \begin{theo}\label{faberKrahnStability}Let $\Omega \subset \RR^2$ be a bounded simply connected open domain and $B$ be a disk with the same area as $\Omega$. Suppose that $\lam(\Omega) \leq (1+\ep)\lam(B)$ for sufficiently small $\ep > 0$. Then there exists a disk $D_1 \subset \Omega$ such that
    \[|D_1| \geq (1-C\ep^{1/4})|\Omega|\]
    \end{theo}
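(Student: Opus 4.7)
The plan is to exploit the Pólya--Szegő symmetrization of the first eigenfunction together with a quantitative planar isoperimetric inequality. Let $u \geq 0$ be the first Dirichlet eigenfunction on $\Omega$ normalized so that $\int_\Omega u^2 = 1$, and let $u^\star$ be its decreasing radial rearrangement, viewed as a function on $B$. By Pólya--Szegő, $\int_B |\nabla u^\star|^2 \leq \int_\Omega |\nabla u|^2 = \lam(\Omega)$, and by the Rayleigh quotient, $\lam(B) \leq \int_B |\nabla u^\star|^2$. So the hypothesis translates directly into the eigenvalue-deficit bound
\[\int_\Omega |\nabla u|^2 - \int_B |\nabla u^\star|^2 \leq \ep\,\lam(B).\]

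The core step is a quantitative Pólya--Szegő inequality. Setting $\mu(t) = |\{u > t\}|$ and using the coarea formula, $\int_\Omega |\nabla u|^2 = \int_0^{\|u\|_\infty}\int_{\{u=t\}}|\nabla u|\,d\mathcal{H}^1\,dt$, Cauchy--Schwarz on each level set gives $\int_{\{u=t\}}|\nabla u|\,d\mathcal{H}^1 \geq \mathcal{H}^1(\{u=t\})^2/(-\mu'(t))$, with equality when the level set is a circle, which it is for $u^\star$. Combining with the sharpened planar isoperimetric inequality $\mathcal{H}^1(\{u=t\}) \geq 2\sqrt{\pi\mu(t)} + c\,\delta(t)$, where $\delta(t) \geq 0$ is a Bonnesen-type defect measuring how far $\{u > t\}$ is from a disk, subtracting the corresponding identities for $u^\star$ yields a lower bound of the form
\[\int_\Omega |\nabla u|^2 - \int_B |\nabla u^\star|^2 \;\geq\; c\int_0^{\|u\|_\infty}\frac{\delta(t)\sqrt{\mu(t)}}{-\mu'(t)}\,dt.\]
Thus smallness of the eigenvalue deficit forces $\delta(t)$ to be small on a large set of $t$. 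Because $\Omega$ is simply connected and $u$ is continuous and positive in $\Omega$, each super-level set $\{u > t\}$ is also simply connected, so Bonnesen's inequality applies and small $\delta(t)$ produces a disk inscribed in $\{u > t\}$ of nearly full measure.

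To extract the final disk $D_1$, pick a threshold $t_0$ of order $\ep^\alpha$: on the one hand $\int u^2 = 1$ together with an easy bound on $\|u\|_\infty$ forces $\mu(t_0) \geq (1 - C\ep^\alpha)|\Omega|$, on the other hand the deficit integral above, together with a Chebyshev-style argument on the set of bad $t$, lets one choose $t_0$ with $\delta(t_0) \leq C\ep^\beta$. Bonnesen then gives an inscribed disk $D_1 \subset \{u > t_0\} \subset \Omega$ with $|D_1| \geq (1 - C\ep^\gamma)\mu(t_0)$, and optimizing $\alpha,\beta,\gamma$ subject to the constraints produces $|D_1| \geq (1 - C\ep^{1/4})|\Omega|$. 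The main obstacle is precisely this calibration: thresholds $t_0$ near $0$ approximate $\Omega$ well in measure but receive weak deficit bounds since $-\mu'(t_0)$ can be large there, while thresholds $t_0$ near $\|u\|_\infty$ enjoy sharper deficit estimates but capture less mass. The exponent $1/4$ emerges from balancing these effects against the quadratic nature of Bonnesen's inequality, which converts a perimeter-deficit bound of order $\ep^\beta$ into a geometric closeness bound of order $\ep^{\beta/2}$.
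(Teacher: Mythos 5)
The paper does not prove this statement: it is quoted directly from Melas \cite{n6} and used as a black box, so the only question is whether your sketch stands on its own. Your overall strategy (level sets of the first eigenfunction, coarea formula with Cauchy--Schwarz, a Bonnesen-type isoperimetric deficit, and a Chebyshev-style selection of a good level $t_0$) is in fact the strategy of Melas' actual proof, and the exponent $\ep^{1/4}$ does arise from the balancing you describe. But two steps, as written, do not work. The first is the claim that ``$\int u^2=1$ together with an easy bound on $\|u\|_\infty$ forces $\mu(t_0)\geq(1-C\ep^{\alpha})|\Omega|$.'' The normalization alone gives only $1\leq \|u\|_\infty^2\mu(t_0)+t_0^2|\Omega|$, hence $\mu(t_0)\geq(1-t_0^2|\Omega|)\|u\|_\infty^{-2}$, and $\|u\|_\infty^{-2}$ is in general much smaller than $|\Omega|$; nothing about the $L^2$ normalization prevents $u$ from concentrating its mass on half of $\Omega$. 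To get the bound you must invoke near-minimality a second time: since $(u-t)^+\in H_0^1(\{u>t\})$, one has $\lam(\{u>t\})\leq\lam(\Omega)/\int((u-t)^+)^2\leq(1+\ep)\lam(B)/(1-2t|\Omega|^{1/2})$, while Faber--Krahn gives $\lam(\{u>t\})\geq\lam(B)\,|\Omega|/\mu(t)$; combining the two yields $\mu(t)\geq|\Omega|(1-2t|\Omega|^{1/2})/(1+\ep)$, which is the estimate you actually need.

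The second problem is the assertion that each superlevel set $\{u>t\}$ is simply connected ``because $\Omega$ is simply connected and $u$ is continuous and positive.'' Absence of holes is true, but for a different reason: $u$ is superharmonic ($-\Delta u=\lam u>0$), so a component of $\{u\leq t\}$ compactly contained in $\{u>t\}$ would force $u\equiv t$ there by the minimum principle, which is impossible for a real-analytic nonconstant function. What does not follow at all is connectedness of $\{u>t\}$: first eigenfunctions of simply connected domains can have several local maxima, so $\{u>t\}$ may have several components, and Bonnesen's inequality $rP\geq A+\pi r^2$ is a statement about a single Jordan domain. You must either restrict attention to the largest component or observe that two components of comparable area force an isoperimetric deficit of order $\sqrt{\mu(t)}$, which is incompatible with the deficit integral being $O(\ep)$. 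With these two repairs (plus the routine care about the critical set $\{\nabla u=0\}$ in the coarea/Cauchy--Schwarz step, harmless here since $u$ is real-analytic in $\Omega$), your outline becomes a correct account of the known proof.
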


    In appendix I, a sequence of subgraphs $\{D_n\}$ is constructed such that $|D_n| = n$ and $\lam(\mathbf{D_n}^*) \to \lam(D)$ as $n \to \infty$. Since each $G_n$ is a minimizing subgraph, we must have
    \[\lam_D(G_n) \leq \lam_D(D_n)\]
    From Theorem \ref{approxTheo} we get
    \[\frac{\lam\left(B^{\ell_1}_{2/\sqrt{n}}\left(\mathbf{G_n}^*\right)\right)}{n + C\lam\left(B^{\ell_1}_{2/\sqrt{n}}\left(\mathbf{G_n}^*\right)\right)} \leq \frac{\lam\left(\mathbf{D^*_n}\right)}{n - C\lam\left(\mathbf{D^*_n}\right)}\]
    Since the $\lam\left(B^{\ell_1}_{2/\sqrt{n}}\left(\mathbf{G_n}^*\right)\right)$'s are uniformly bounded, multiplying both sides by $n$ and taking $n \to \infty$ implies that
    \[\lim\sup_{n\to\infty}\lam\left(B^{\ell_1}_{2/\sqrt{n}}\left(\mathbf{G_n}^*\right)\right) \leq \lam(D)\]
    This allows for an application of  Melas' theorem. This produces disks $B_n \subset B^{\ell_1}_{2/\sqrt{n}}\left(\mathbf{G_n}^*\right)$ such that
    \[|B_n| \geq (1-C\ep(n)^{1/4})|B^{\ell_1}_{2/\sqrt{n}}\left(\mathbf{G_n}^*\right)|\]
    where $\ep(n) \to 0$ as $n \to\infty$. Since the symmetric difference of $\mathbf{G_n}^*$ and $B^{\ell_1}_{2/\sqrt{n}}\left(\mathbf{G_n}^*\right)$ converges to $0$ as $n \to \infty$, Theorem \ref{mainTheo} immediately follows.
    \subsection{Spectral Theory Background}

    Though this paper is mainly concerned with graph theory, the primary motivation comes from the spectral theory of the Laplacian. Hence, we will briefly summarize important results from the spectral theory of the Laplacian with Dirichlet boundary conditions. Proofs more than a few lines will generally be omitted, and references will be provided. No results in this section will be used directly, but will instead provide context for ideas introduced later.

    The Laplacian is the differential operator given by
    \[-\sum_{i=1}^n\frac{\pa^2}{\pa x_i^2}\]
    This is commonly denoted by $\Delta$. Suppose $\Omega \subset \RR^n$ is a bounded open domain. Let $C^{\infty}_c(\Omega)$ denote the vector space of infinitely differentiable complex valued functions with compact support in $\Omega$. Then the Laplacian maps $C_c^{\infty}(\Omega)$ to $C_c^{\infty}(\Omega)$. This restriction to functions of compact support is referred to as ``Dirichlet boundary conditions.'' Classically, a non-zero $u \in C_c^{\infty}(\Omega)$ is an eigenfunction for the Laplacian if
    \[\Delta u = \lambda u\text{ for }\lam \in \CC\]
    We refer to $\lambda$ as the eigenvalue of $u$. It turns out that the spectral theory of the Laplacian is much richer if we relax our notion of eigenfunctions.

    For any $u \in C_c^{\infty}(\Omega)$ integration by parts gives us
    \begin{align}
    (\Delta - \lambda)u &= 0 \Leftrightarrow\\
    \int_{\Omega}[(\Delta - \lam)u]v\ dx &= 0\ \forall v \in C_c^{\infty}(\Omega)\Leftrightarrow \\\label{weak}
    \int_{\Omega} \nabla u\cdot\nabla v\ dx - \int_{\Omega}\lam uv\ dx &= 0\ \forall v \in C_c^{\infty}(\Omega)
    \end{align}
    Note that the last expression makes sense for $u \in C^1(\Omega)$, the vector space of continuously differentiable complex valued functions. We say $u \in C^1(\Omega)$ is a ``weak solution'' of $\Delta-\lam$ if (\ref{weak}) holds. We can push this idea further. Suppose $f \in L^2(\Omega)$. If the reader is unfamiliar with $L^2$, define it to be the completion of $C_c^{\infty}(\Omega)$ with respect to the inner product
    \[\la u,v\ra = \int_{\Omega}u\overline{v}\ dx\]
    Unless $f$ lies in the image of the natural embedding $C^1(\Omega) \hookrightarrow L^2(\Omega)$, there is no a priori notion of $\frac{\pa f}{\pa x_i}$. Motivated by the calculation above we make the following definition
    \begin{defi}Let $f \in L^2(\Omega)$. We say that $\frac{\pa f}{\pa x_i} = u \in L^2(\Omega)$ if
    \[\int_{\Omega} uv\ dx = -\int_{\Omega} f\frac{\pa v}{\pa x_i}\ dx\ \forall v \in C_c^{\infty}(\Omega)\]
    \end{defi}

    In section 5.2 of \cite{n12} it is shown that if this ``weak derivative'' exists, it is uniquely defined up to a set of measure zero. If $f$ is in the image of $C_c^{\infty}(\Omega)$, then integration by parts implies that both the regular derivative and this weak derivative agree. Furthermore, many properties of derivatives hold for weak derivatives. See \cite{n12} for the specifics. Now we are ready to define the Sobolev space $H^1$.
    \begin{defi}
    \[H^1(\Omega) = \left\{f \in L^2(\Omega): \text{ such that }\frac{\pa f}{\pa x_i}\text{ exists weakly for } i = 1,\ \cdots,\ n\right\}\]
    \end{defi}
    This space comes equipped with an inner product
    \[\la u,v\ra_{H^1} = \int_{\Omega}u\overline{v}\ dx + \int_{\Omega}\nabla u\cdot\overline{\nabla v}\ dx\]
    In section 5.2 of \cite{n12} it is shown that this inner product makes $H^1$ into a Hilbert space, i.e. it is complete with respect to the norm
    \[\vv u\vv_{H^1} = \sqrt{\la u,v\ra}\]
    Note that $C_c^{\infty}(\Omega)$ is easily seen to lie in $H^1$. We define $H_0^1(\Omega)$ to be the closure of $C_c^{\infty}(\Omega)$ in $H^1(\Omega)$. We say that $u \in H_0^1(\Omega)$ is an eigenfunction of $\Delta$ with eigenvalue $\lambda$ if
    \[\int_{\Omega} \nabla u\cdot\nabla v\ dx - \int_{\Omega}\lam uv\ dx = 0\ \forall v \in C_c^{\infty}(\Omega)\]
    Thus we have managed to reformulate our eigenvalue problem over $H_0^1(\Omega)$ which is a Hilbert space. This allows for many techniques of real and functional analysis to be applied.

    We group some key results into one theorem.
    \begin{theo}\label{spec}The set of eigenfunctions form a countable set $\{\varphi_i\}_{i=1}^{\infty}$ with real monotonically increasing positive eigenvalues $\{\lam_i\}_{i=1}^{\infty}$ such that
    \begin{enumerate}
    \item The $\{\varphi_i\}$ form an orthonormal basis of both $L^2(\Omega)$ and $H_0^1(\Omega)$.
    \item $\lam_i \to \infty$ as $i \to \infty$.
    \item The $\lam_i$ obey the following ``minimax principle''
    \[\lam_k = \max_{S \in V_{k-1}}\ \min_{u \in S^{\perp}\text{ and } u \neq 0}\frac{\int_{\Omega} |\nabla u|^2\ dx}{\int_{\Omega} |u|^2\ dx},\]
    where $V_{k-1}$ denotes the set of $k-1$ dimensional subspaces of $H_0^1(\Omega)$.

    \item $\varphi_1$ is either strictly positive or strictly negative in the interior of $\Omega$.
    \end{enumerate}
    \end{theo}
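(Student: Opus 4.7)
The plan is to reduce the eigenvalue problem for $\Delta$ on $H_0^1(\Omega)$ to the spectral theory of a compact self-adjoint operator on $L^2(\Omega)$, via the Dirichlet Green's operator $T = \Delta^{-1}$, and then read off each of the four conclusions in turn.

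First I would construct $T\colon L^2(\Omega)\to H_0^1(\Omega)$ by sending $f$ to the unique weak solution $u$ of $\Delta u = f$ with $u\in H_0^1(\Omega)$. Existence and uniqueness come from Lax--Milgram applied to the bilinear form $B(u,v) = \int_\Omega \nabla u\cdot\overline{\nabla v}\,dx$ on $H_0^1(\Omega)$: continuity is immediate, and coercivity is exactly the Poincar\'e inequality $\vv u\vv_{L^2}\le C(\Omega)\vv\nabla u\vv_{L^2}$ on $H_0^1(\Omega)$, which uses boundedness of $\Omega$. Symmetry of $B$ makes $T$ self-adjoint on $L^2$, and $B(u,u)\ge 0$ makes $T$ positive. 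The crucial step is compactness: composing with the inclusion $H_0^1\hookrightarrow L^2$, I would invoke the Rellich--Kondrachov theorem, which gives compactness of that embedding for bounded $\Omega$, hence compactness of $T\colon L^2\to L^2$.

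Next I would apply the spectral theorem for compact self-adjoint positive operators on a separable Hilbert space: $T$ admits an $L^2$-orthonormal basis $\{\varphi_i\}_{i=1}^\infty$ of eigenfunctions with eigenvalues $\mu_i>0$ and $\mu_i\to 0$. Setting $\lam_i = 1/\mu_i$, each $\varphi_i$ is then an eigenfunction of $\Delta$ in the weak sense of the preceding definition, with $\lam_i\to\infty$, giving parts (1) and (2) on the $L^2$ side. That $\{\varphi_i\}$ is also a basis of $H_0^1(\Omega)$ follows from the identity $\la\varphi_i,\varphi_j\ra_{H^1} = (1+\lam_i)\delta_{ij}$ and a density argument using that $C_c^\infty$ is dense in $H_0^1$ and $T$ maps $L^2$ onto its dense range. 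For part (3), the Courant--Fischer min-max theorem for the compact self-adjoint operator $T$ applied to the eigenvalues $\mu_k$, translated via $\lam_k = 1/\mu_k$, yields exactly the stated Rayleigh-quotient min-max formula, since $B(u,u)/\vv u\vv_{L^2}^2$ is precisely the Rayleigh quotient $\int|\nabla u|^2/\int|u|^2$.

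Finally, for part (4), I would use that $u\in H_0^1(\Omega)$ implies $|u|\in H_0^1(\Omega)$ with $|\nabla|u||=|\nabla u|$ a.e. Consequently $|\varphi_1|$ realizes the same Rayleigh quotient as $\varphi_1$ and is therefore itself a weak first eigenfunction. By elliptic regularity (interior smoothness of weak eigenfunctions of $\Delta$), $|\varphi_1|$ is smooth in the interior of $\Omega$, non-negative, and satisfies $\Delta |\varphi_1| = \lam_1|\varphi_1|$. The strong maximum principle (or Harnack's inequality) then forces $|\varphi_1|$ to be either identically zero or strictly positive in the interior; the former is impossible since $\vv\varphi_1\vv_{L^2}=1$, so $\varphi_1$ is of constant sign.

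The main obstacle is compactness of $T$: the other steps are essentially bookkeeping once one has a compact self-adjoint Green's operator. Rellich--Kondrachov is the analytic workhorse that converts the $H^1$ eigenvalue problem into a spectral problem for a compact operator and thereby supplies the discreteness, the accumulation only at infinity, and the orthonormal basis property all at once. The positivity of $\varphi_1$ is the only step requiring input beyond functional analysis, namely interior regularity together with the strong maximum principle.
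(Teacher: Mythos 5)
The paper does not give its own proof of this theorem; it simply refers the reader to Chapter~6.5 of Evans \cite{n12}, and your outline is exactly the standard argument found there (Lax--Milgram with Poincar\'e to build the Green's operator, compactness via Rellich--Kondrachov, the spectral theorem for compact self-adjoint operators, Courant--Fischer for the min-max, and the $|\varphi_1|$-trick combined with interior elliptic regularity and the strong maximum principle for part~(4)). Your proposal is correct and matches the cited source's approach, with the usual implicit caveat that part~(4) requires $\Omega$ to be connected, an assumption the statement itself also leaves tacit.
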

    See chapter 6.5 of \cite{n12} for more background and proofs.

    The map $R: H_0^1(\Omega) \to \RR$ given by
    \[u \mapsto \frac{\int_{\Omega} |\nabla u|^2\ dx}{\int_{\Omega} |u|^2\ dx}\]
    is called the Rayleigh quotient. Let $\lam(\Omega)$ denote the first eigenvalue of the Laplacian on $\Omega$. Theorem \ref{spec} gives a variational formulation of $\lam(\Omega)$.
    \begin{theo}\label{variational}
    \[\lam(\Omega) = \inf_{u \in H_0^1(\Omega)\text{ and }u\neq 0} R(u)\]
    Furthermore, on page 356 of \cite{n12} it is shown that this infimum is uniquely achieved by constant multiples of $\varphi_1$.
    \end{theo}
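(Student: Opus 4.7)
The plan is to read the variational formula off of Theorem \ref{spec}, which provides both an orthonormal basis of eigenfunctions and the minimax principle. The cleanest derivation is to specialize the minimax principle to $k=1$: the only $(k-1)$-dimensional subspace in $V_0$ is $\{0\}$, so $S^{\perp} = H_0^1(\Omega)$ and the minimax formula collapses to $\lam_1 = \min_{u \in H_0^1(\Omega),\ u \neq 0} R(u)$, which is exactly the claim.

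If one prefers a self-contained argument not invoking the full minimax machinery, I would expand any nonzero $u \in H_0^1(\Omega)$ in the $L^2$-orthonormal basis $\{\varphi_i\}$ as $u = \sum_i c_i \varphi_i$. Testing the weak eigenvalue equation for $\varphi_i$ against $\varphi_j$ yields $\int_\Omega \nabla \varphi_i \cdot \overline{\nabla \varphi_j}\,dx = \lam_i \delta_{ij}$, so
\[
R(u) = \frac{\sum_i \lam_i |c_i|^2}{\sum_i |c_i|^2} \geq \lam_1,
\]
with equality if and only if $c_i = 0$ for every $i$ with $\lam_i > \lam_1$, i.e., if and only if $u$ lies in the $\lam_1$-eigenspace. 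Since $\varphi_1 \in H_0^1(\Omega)$ satisfies $R(\varphi_1) = \lam_1$, the infimum is attained and equals $\lam(\Omega)$.

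The above already shows that the set of minimizers is exactly the $\lam_1$-eigenspace, so the uniqueness assertion reduces to the simplicity of $\lam_1$. To prove simplicity, I would use item 4 of Theorem \ref{spec}, which states that $\varphi_1$ is of one sign in the interior of $\Omega$. Suppose, for contradiction, that the $\lam_1$-eigenspace has dimension $\geq 2$; then there exists a unit-norm eigenfunction $\psi$ with $\psi \perp \varphi_1$ in $L^2$. Using the Sobolev identity $|\nabla |\psi|| = |\nabla \psi|$ a.e., we have $R(|\psi|) = R(\psi) = \lam_1$, so $|\psi|$ is also a minimizer and hence a $\lam_1$-eigenfunction; by elliptic regularity and the strong maximum principle it is either identically zero or strictly positive. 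Consequently $\psi$ is of constant sign, contradicting orthogonality to the strictly positive $\varphi_1$.

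The main obstacle is the simplicity argument, which is the only place genuinely analytic input beyond Theorem \ref{spec} is required (the strong maximum principle together with the Stampacchia-type identity for weak derivatives). The variational formula itself is an essentially formal consequence of the spectral decomposition already in hand, which is why the author can afford to cite the uniqueness to \cite{n12} and move on.
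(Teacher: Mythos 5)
Your first paragraph is exactly the paper's argument: the paper obtains the variational formula by specializing the minimax principle of Theorem \ref{spec} to $k=1$, and it does not prove the uniqueness assertion at all, instead citing page 356 of \cite{n12}. Your orthonormal-expansion derivation and your simplicity argument (equality forces $u$ into the $\lam_1$-eigenspace, and $R(|\psi|)=R(\psi)$ together with the strong maximum principle rules out a second independent eigenfunction) are correct and standard; they simply supply the details the paper outsources to Evans.
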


    For $\Omega \subset \RR^2$ we have a physical interpretation of $\lam(\Omega)$. Namely, $\lam(\Omega)$ corresponds to the deepest bass note of a drum whose skin is in the shape of $\Omega$. Based on physical evidence, Rayleigh made the following conjecture for $n=2$.

    \begin{theo}\label{faberKrahn}(Faber-Krahn Inequality)
        Let $D \subset \RR^n$ be the ball of volume $1$ about the origin. Then
        \[\lam(D) = \min\{\lam(\Omega) : \Omega \subset \RR^n\text{ is a bounded open set of volume }1\}\]
    \end{theo}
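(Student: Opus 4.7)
My plan is to prove the Faber-Krahn inequality by combining the variational characterization of $\lam(\Omega)$ from Theorem \ref{variational} with the classical technique of Schwarz symmetrization, the latter reducing the problem to the isoperimetric inequality in $\RR^n$. The strategy is: given any bounded open $\Omega \subset \RR^n$ with $|\Omega| = 1$, I produce a non-negative function $\varphi_1^{\star} \in H_0^1(D)$ with the same $L^2$ norm as the first Dirichlet eigenfunction $\varphi_1$ of $\Omega$ but with Dirichlet energy no larger than that of $\varphi_1$, so that the Rayleigh quotient satisfies $R(\varphi_1^{\star}) \leq R(\varphi_1) = \lam(\Omega)$, and Theorem \ref{variational} applied on $D$ then gives $\lam(D) \leq R(\varphi_1^{\star}) \leq \lam(\Omega)$.

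First I fix $\Omega$ and take $\varphi_1$ to be its first Dirichlet eigenfunction, which by part 4 of Theorem \ref{spec} may be chosen strictly positive on the interior. I define the Schwarz symmetrization $\varphi_1^{\star}$ as the unique non-negative, radially symmetric, radially non-increasing function on $\RR^n$ whose super-level sets are open balls centered at the origin satisfying $|\{\varphi_1^{\star} > t\}| = |\{\varphi_1 > t\}|$ for every $t > 0$. Three properties should then be checked: (i) equimeasurability of the super-level sets together with the layer-cake formula yields $\int |\varphi_1^{\star}|^2\, dx = \int |\varphi_1|^2\, dx$; (ii) since $|\Omega| = |D| = 1$ and $\varphi_1$ vanishes outside $\Omega$, the support of $\varphi_1^{\star}$ is contained in $D$, and a routine approximation argument places $\varphi_1^{\star}$ in $H_0^1(D)$; (iii) the P\'olya-Szeg\H{o} inequality $\int |\nabla \varphi_1^{\star}|^2\, dx \leq \int |\nabla \varphi_1|^2\, dx$.

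The main obstacle is property (iii), and this is where the isoperimetric inequality enters. I would prove it via the coarea formula. Setting $\mu(t) := |\{\varphi_1 > t\}|$, coarea gives
\[\int |\nabla \varphi_1|^2\, dx = \int_0^{\infty} \left(\int_{\{\varphi_1 = t\}} |\nabla \varphi_1|\, dS\right) dt, \qquad -\mu'(t) = \int_{\{\varphi_1 = t\}} \frac{dS}{|\nabla \varphi_1|}.\]
Applying Cauchy-Schwarz on each level set to the pair $|\nabla \varphi_1|^{1/2}$ and $|\nabla \varphi_1|^{-1/2}$ yields $\int_{\{\varphi_1 = t\}} |\nabla \varphi_1|\, dS \geq P(\{\varphi_1 > t\})^2 / (-\mu'(t))$, where $P$ denotes the perimeter. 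The classical isoperimetric inequality in $\RR^n$ gives $P(\{\varphi_1 > t\}) \geq P(\{\varphi_1^{\star} > t\})$, since the latter super-level set is the ball of the same volume. On the other hand, because $\varphi_1^{\star}$ is radial, $|\nabla \varphi_1^{\star}|$ is constant on each of its level sets, so the analogous Cauchy-Schwarz step is an equality and $\int_{\{\varphi_1^{\star} = t\}} |\nabla \varphi_1^{\star}|\, dS = P(\{\varphi_1^{\star} > t\})^2 / (-\mu'(t))$ exactly. Combining these and integrating over $t$ gives $\int |\nabla \varphi_1|^2\, dx \geq \int |\nabla \varphi_1^{\star}|^2\, dx$, which is (iii). (For the coarea argument to be fully rigorous I would either first mollify $\varphi_1$ and pass to a limit, or invoke Sard-type results that make $\{\varphi_1 = t\}$ smooth for a.e.\ $t$.)

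Combining (i), (ii), and (iii) one obtains $R(\varphi_1^{\star}) \leq R(\varphi_1) = \lam(\Omega)$, and since $\varphi_1^{\star} \in H_0^1(D)$, Theorem \ref{variational} applied on $D$ gives $\lam(D) \leq R(\varphi_1^{\star}) \leq \lam(\Omega)$. As $\Omega$ was an arbitrary bounded open set of volume $1$, this finishes the inequality, and the minimum is attained at $D$ itself, since in that case $\varphi_1$ is already radial and the Schwarz symmetrization leaves it unchanged.
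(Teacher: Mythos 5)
Your proposal is correct and follows essentially the same route as the paper: symmetrize the first eigenfunction, use equimeasurability together with the P\'olya--Szeg\H{o} inequality, and compare Rayleigh quotients via Theorem \ref{variational}. The only difference is that you sketch the coarea/isoperimetric proof of property (iii), which the paper simply quotes as inequality (\ref{p2}) with a reference to \cite{n13}.
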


    The key technique involved in the proof is radial decreasing rearrangements. For any bounded open set $O$ in $\RR^n$ we let $O^*$ denote the ball centered at the origin with the same volume. Let $\Omega$ be a bounded open set of volume $1$ and $u \in C_c^{\infty}(\Omega)$ with $u \geq 0$. We define
    \[\Omega(c) = \{x \in \Omega: u(x) \geq c\}\]
    and set
    \[u^{\star}(x) = \sup\{c \in \RR : x \in \Omega(c)^*\}\]
    This same procedure can be carried out for $u \in H_0^1(\Omega)$ but the definitions need to be slightly modified to reflect the fact that elements of $H_0^1(\Omega)$ are equivalence classes of functions.
    The two key properties that allow for a proof of the Theorem \ref{faberKrahn} are
    \begin{equation}\label{p1}
    \int_D |u^{\star}|^2\ dx = \int_{\Omega} |u|^2\ dx
    \end{equation}
and
    \begin{equation}\label{p2}
    \int_D |\nabla u^{\star}|^2\ dx \leq \int_{\Omega} |\nabla u|^2\ dx
    \end{equation}

    For a thorough discussion of rearrangements see \cite{n13}.

    Given these properties, the proof of Theorem \ref{faberKrahn} is easy. Choose any open domain $\Omega$ of volume $1$. Let $u$ be the eigenfunction corresponding to $\lam(\Omega)$. Then
    \begin{align*}
    \lam(D) &\leq \frac{\int_D |\nabla u^{\star}|^2\ dx}{\int_D |u^{\star}|^2\ dx}\text{ by Theorem \ref{variational}}\\
    &\leq \frac{\int_D |\nabla u|^2\ dx}{\int_D |u|^2\ dx}\text{ by (\ref{p1}) and (\ref{p2})}\\
    &= \lam(\Omega)
    \end{align*}

    Following this, a natural question is: To what extent is $D$ is the unique minimizer of $\lam(\Omega)$? Suppose $\Omega$ is a domain with $\lam(\Omega) = \lam(D)$. Let $u$ be the eigenfunction associated to $\lam(\Omega)$. From the proof of Theorem \ref{faberKrahn} we see that
    \[\int_{\Omega}|\nabla u|^2\ dx = \int_D |\nabla u^{\star}|^2\ dx\]

    This leads to the following question: For what $u \in H_0^1(\Omega)$ with $u \geq 0$ do we have
    \[\int_{\Omega}|\nabla u|^2\ dx = \int_{D} |\nabla u^{\star}|^2\ dx\]
    This question and associated generalizations have been studied extensively. See \cite{n14} for a recent paper addressing these questions. As a special case of the main theorem in \cite{n14} we have
    \begin{theo}Let $\Omega$ be a bounded open set of volume $1$. Suppose $u \in H_0^1(\Omega)$, $u \geq 0$, and
    \[\int_{\Omega}|\nabla u|^2\ dx = \int_{D} |\nabla u^{\star}|^2\ dx\]
    Then, after a translation, the symmetric difference of $\Omega$ and $D$ has measure zero. That is, for some $x_0$ the measure of
    \[[(\Omega + x_0) - D] \cup [D-(\Omega+x_0)]\]
    is zero.
    \end{theo}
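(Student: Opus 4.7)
The plan is to adapt the classical equality analysis of the Polya-Szego inequality (the gradient rearrangement inequality used in the proof of Theorem \ref{faberKrahn}). First, I would recover the inequality $\int_\Omega |\nabla u|^2\,dx \geq \int_D |\nabla u^\star|^2\,dx$ by the standard chain of estimates: setting $\mu(t) = |\{u > t\}|$, the coarea formula gives $-\mu'(t) = \int_{\{u=t\}}|\nabla u|^{-1}\,dS$ for a.e.\ $t$; Cauchy--Schwarz on the level set yields $\mathrm{Per}(\{u > t\})^2 \leq -\mu'(t)\int_{\{u=t\}}|\nabla u|\,dS$; and the classical isoperimetric inequality together with $\mu_u = \mu_{u^\star}$ gives $\mathrm{Per}(\{u > t\}) \geq \mathrm{Per}(\{u^\star > t\})$. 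Integrating in $t$ reconstructs the Polya-Szego inequality, and each of these inequalities is sharp for the radial rearrangement $u^\star$ (on whose level spheres $|\nabla u^\star|$ is constant). The hypothesis of global equality therefore forces equality in both Cauchy--Schwarz and the isoperimetric inequality for almost every $t$.

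Reading off the equality cases yields two pieces of data: (a) for a.e.\ $t$ the superlevel set $\{u > t\}$ is, up to a set of measure zero, a ball $B(x(t), r(t))$; and (b) for a.e.\ $t$ the modulus $|\nabla u|$ is essentially constant on the level set $\{u = t\}$, with some value $g(t)$.

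The main obstacle is to show that $x(t)$ is independent of $t$. This is the Brothers--Ziemer phenomenon, and both equality conditions are used in an essential way. The idea is to follow the flow of the rescaled gradient field $-\nabla u / |\nabla u|^2$, along which $u$ decreases at unit rate. At a point $p \in \partial B(x(t_0), r(t_0))$ the gradient $\nabla u$ is normal to the sphere and (by (a)) points radially toward $x(t_0)$, and (by (b)) the outward speed is the same constant $1/g(t_0)$ for every starting point on the sphere. After an infinitesimal time $dt$, the sphere $\partial B(x(t_0), r(t_0))$ is carried to a dilated sphere still centered at $x(t_0)$, which must agree (up to measure zero) with the new superlevel sphere $\partial B(x(t_0 - dt), r(t_0 - dt))$ by (a). So $x(\cdot)$ is locally constant in $t$, and the nesting of superlevel sets upgrades this to a single common value $x_0$. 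Turning this heuristic into a rigorous argument for $H_0^1$ functions (where $|\nabla u|$ may vanish on a critical set and the flow is not \textit{a priori} well-defined) is where the real technical effort lies; this is precisely what the main result of \cite{n14} accomplishes.

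Once a common center $x_0$ is identified, $u(\cdot + x_0)$ is a nonnegative radially decreasing function with the same distribution function as $u^\star$, so $u(\cdot + x_0) = u^\star$ almost everywhere. In particular the essential support of $u$ equals $\overline{D} + x_0$ up to measure zero. In the intended application $u$ is the first Dirichlet eigenfunction of $\Omega$, which by Theorem \ref{spec}(4) is strictly positive on the interior of $\Omega$, so the essential support of $u$ equals $\overline{\Omega}$ up to measure zero; combined with the equal-area constraint $|\Omega| = |D| = 1$, this implies that the symmetric difference of $\Omega$ and $D + x_0$ has measure zero, as required.
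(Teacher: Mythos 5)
The paper does not actually prove this statement: it is quoted verbatim as ``a special case of the main theorem in \cite{n14}'' (Ferone--Volpicelli), with no argument supplied. Your proposal reconstructs the standard proof scheme behind that cited result --- the coarea/Cauchy--Schwarz/isoperimetric chain establishing the Polya--Szego inequality, followed by the equality analysis giving (a) spherical superlevel sets and (b) level-set-constant $|\nabla u|$, and then the Brothers--Ziemer argument that the centers $x(t)$ coincide --- while explicitly deferring the genuinely hard technical point (making the flow argument rigorous when $\nabla u$ can vanish on a large set) to the same reference \cite{n14}. So in substance you land where the paper lands, but you supply the mechanism the paper omits. Two remarks. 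First, your observation that the conclusion requires the essential support of $u$ to exhaust $\Omega$ is correct and important: as literally stated the theorem is false (take $u$ radial and compactly supported in a small ball well inside a large non-spherical $\Omega$; equality holds but $\Omega$ is not a ball), and your repair via Theorem \ref{spec}(4) in the intended application to the first eigenfunction is exactly the right one. Second, be aware that even with full support the rigidity statement needs the Brothers--Ziemer nondegeneracy hypothesis that the critical set $\{\nabla u^{\star}=0\}\cap (u^{\star})^{-1}(0,\operatorname{ess\,sup}u)$ has measure zero --- without it there are counterexamples to ``equality implies translate of $u^{\star}$.'' This hypothesis is satisfied by first eigenfunctions (which are real-analytic in $\Omega$), so the application is safe, but neither the paper's statement nor your sketch records it; your heuristic flow argument silently assumes $|\nabla u|>0$ on the level sets it pushes along.
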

    Thus we do in fact have
    \begin{theo}\label{strongFaberKrahn}Suppose $\Omega$ is an open domain in $\RR^n$ of volume $1$ so that
    \[\lam(\Omega) = \lam(D)\]
    Then, after a translation, the symmetric difference of $\Omega$ and $D$ has measure zero.
    \end{theo}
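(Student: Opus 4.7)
The proof is essentially a matter of assembling the ingredients cited immediately before the statement. The plan is to extract a sign-definite first eigenfunction of $\Omega$, walk through the chain of inequalities appearing in the proof of Theorem \ref{faberKrahn} and observe that they must all be equalities, and then invoke the rigidity theorem from \cite{n14}.

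First I would apply Theorem \ref{spec} to produce a first Dirichlet eigenfunction $\varphi_1 \in H_0^1(\Omega)$ with eigenvalue $\lam(\Omega)$. By part 4 of that theorem, $\varphi_1$ has constant sign on the interior of $\Omega$, so after replacing it by $-\varphi_1$ if necessary I may assume $u := \varphi_1 \geq 0$. The symmetric decreasing rearrangement $u^{\star}$ then lies in $H_0^1(D)$, and the calculation from the proof of Theorem \ref{faberKrahn} yields
\[\lam(D)\ \leq\ \frac{\int_D |\nabla u^{\star}|^2\ dx}{\int_D |u^{\star}|^2\ dx}\ \leq\ \frac{\int_\Omega |\nabla u|^2\ dx}{\int_\Omega |u|^2\ dx}\ =\ \lam(\Omega),\]
where the first inequality is Theorem \ref{variational} applied to $u^{\star}$, the second uses (\ref{p1}) in the denominator and (\ref{p2}) in the numerator, and the final equality holds because $u$ is a first eigenfunction of $\Omega$.

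The hypothesis $\lam(\Omega) = \lam(D)$ collapses this chain into a string of equalities. Combined with the equality (\ref{p1}), equality of the Rayleigh quotients forces
\[\int_\Omega |\nabla u|^2\ dx\ =\ \int_D |\nabla u^{\star}|^2\ dx,\]
which is precisely the hypothesis of the theorem from \cite{n14} quoted immediately above. Applying that theorem to our nonnegative $u$ produces a translation under which $\Omega$ and $D$ differ by a set of measure zero, finishing the argument.

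Since every step is either a direct citation or a trivial manipulation, there is no substantive obstacle in the proof itself; the theorem is essentially a corollary of Theorems \ref{spec}, \ref{variational}, \ref{faberKrahn}, and the preceding rigidity result. The genuine analytic difficulty — promoting an equality of gradient norms for a single function into a geometric congruence of the underlying domains — is entirely absorbed into the cited result from \cite{n14}, so the only point requiring care here is verifying that $u$ can be taken nonnegative, which is exactly the content of part 4 of Theorem \ref{spec}.
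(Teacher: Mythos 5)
Your argument is correct and is essentially the same assembly the paper itself carries out just above the statement: take a nonnegative first eigenfunction (via Theorem \ref{spec}), observe that $\lam(\Omega)=\lam(D)$ collapses the Faber-Krahn chain to equalities so that $\int_\Omega |\nabla u|^2\,dx = \int_D |\nabla u^\star|^2\,dx$, and then invoke the rigidity theorem quoted from \cite{n14}. Your explicit appeal to part 4 of Theorem \ref{spec} to justify the sign normalization is a small clarification the paper leaves implicit, but the route is identical.
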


\section{The Combinatorial Laplacian}
Everything discussed in this section can found with many more details in \cite{n2}.
Let $G$ be a finite graph with no loops and at most one edge between any two vertices. $|G|$ denotes the number of vertices in $G$. If $x$ is connected to $y$ via an edge of $G$ we say $x \sim_{G} y$. The degree of a vertex is the number of neighboring vertices. This is denoted by $\deg_G(x)$. For both $\sim_{G}$ and $\deg_{G}$ we will drop the $G$ if it is clear from context.

Now we will give some fundamental definitions. After numbering the vertices of $G$ arbitrarily, we let $B$ be the $|G| \times |G|$ diagonal matrix where $B_{jj}$ is the degree of the $j$th vertex of $G$. The $|G| \times |G|$ adjacency matrix $A$ is defined by setting
\begin{equation*}
    A_{ij} \equiv A_{ji} \equiv \left\{
    \begin{array}{rl}
    1 & \text{if } i \sim j\\
    0 & \text{if } i \not\sim j
    \end{array} \right.
\end{equation*}
for $i\neq j$ and $A_{ii} \equiv 0$.

Then we define the Laplacian to be the $|G| \times |G|$ matrix $L = B - A$. We identify $\RR^n$ with real valued functions on the vertices of $G$ by sending the standard basis vector $e_i$ to the function whose value on vertex $i$ is $1$, and otherwise is $0$. Hence, without further comment we will treat functions on $G$ as vectors in $\RR^n$ and vice versa. A different numbering of the vertices amounts to permuting the basis vectors and thus does not change the conjugacy class of $L$. We will speak of ``the'' Laplacian associated to $G$ with the understanding that we are only concerned with the conjugacy class of $L$. Less abstractly
\[(Lf)(i) = \sum_{j\sim i}(f(i)-f(j))\]
There are many reasons why this deserves to be called the ``Laplacian'' of a graph. One is the following analogue of the mean value property.
\begin{prop}
Suppose
\[(Lf)(i) \geq 0\]
Then
\[f(i) \geq (1/\deg(i))\sum_{j\sim i}f(j)\]
\end{prop}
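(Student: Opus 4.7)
The plan is to proceed by direct computation from the explicit pointwise formula for $L$ already stated in the excerpt, namely
\[(Lf)(i) = \sum_{j\sim i}(f(i)-f(j)).\]
This identity is the only substantive ingredient; everything else is algebraic manipulation.

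First, I would split the sum on the right-hand side as
\[(Lf)(i) = \sum_{j\sim i}f(i) - \sum_{j\sim i}f(j) = \deg(i)\,f(i) - \sum_{j\sim i}f(j),\]
using that the number of neighbors of $i$ is $\deg(i)$ by definition, so summing the constant $f(i)$ over neighbors of $i$ yields $\deg(i)\,f(i)$. Then the hypothesis $(Lf)(i)\geq 0$ rearranges directly to
\[\deg(i)\,f(i) \geq \sum_{j\sim i} f(j).\]
Assuming $\deg(i)>0$ (the only nontrivial case, since if $i$ is isolated the claimed inequality has an empty right-hand side and is vacuous, modulo the convention that an empty average is zero), dividing both sides by $\deg(i)$ yields the conclusion.

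There is no real obstacle here; the only thing to watch is the case $\deg(i)=0$, which requires a brief comment about how to interpret the average $(1/\deg(i))\sum_{j\sim i} f(j)$. Since $L$ is defined with $B$ being the diagonal degree matrix, isolated vertices contribute only a zero row and the statement degenerates. The heart of the proof is thus the one-line observation that $L = B - A$ evaluated on $f$ at vertex $i$ gives $\deg(i)f(i)$ minus the sum of $f$ over neighbors, which is precisely the discrete analogue of $-\Delta u(x) \propto u(x) - \text{(average of $u$ on a small sphere about $x$)}$ that motivates calling the conclusion a mean value property.
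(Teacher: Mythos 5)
Your argument is correct and is exactly the computation the paper has in mind when it says the result is ``immediate from the definition'': expand $(Lf)(i)=\deg(i)f(i)-\sum_{j\sim i}f(j)$ and divide by $\deg(i)$. The aside about isolated vertices is a reasonable bit of care but does not change the substance.
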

\begin{proof}This is immediate from the definition.
\end{proof}
Another reason is
\begin{lemm}\label{intByParts}(Integration by Parts)
\[\la Lf,f\ra = \sum_{i\sim j}\Big(f(i)-f(j)\Big)^2\]
\end{lemm}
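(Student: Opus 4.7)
The plan is a direct symmetrization argument: compute $\la Lf, f\ra$ from the formula $(Lf)(i) = \sum_{j\sim i}(f(i)-f(j))$, then exploit the symmetry $i \sim j \iff j \sim i$ to rewrite the result as a sum of squares.

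First I would expand
\[
\la Lf, f\ra \;=\; \sum_{i=1}^{|G|} (Lf)(i)\, f(i) \;=\; \sum_{i}\sum_{j \sim i} f(i)\bigl(f(i) - f(j)\bigr),
\]
which is a sum over ordered pairs $(i,j)$ of adjacent vertices. Since adjacency is symmetric, relabeling $i \leftrightarrow j$ in this double sum gives the equivalent expression $\sum_{i}\sum_{j\sim i} f(j)\bigl(f(j) - f(i)\bigr)$. Averaging the two forms yields
\[
\la Lf, f\ra \;=\; \tfrac{1}{2}\sum_{i}\sum_{j\sim i}\Bigl[f(i)\bigl(f(i)-f(j)\bigr) + f(j)\bigl(f(j)-f(i)\bigr)\Bigr] \;=\; \tfrac{1}{2}\sum_{i}\sum_{j\sim i}\bigl(f(i)-f(j)\bigr)^{2}.
\]
Finally, each unordered edge $\{i,j\}$ contributes twice to the ordered double sum on the right, so the factor $\tfrac{1}{2}$ cancels and we obtain $\la Lf, f\ra = \sum_{i\sim j}\bigl(f(i)-f(j)\bigr)^{2}$, with the sum interpreted over edges (unordered adjacent pairs), as the statement requires.

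There is no real obstacle beyond a bookkeeping check: one must be careful about the ordered-versus-unordered convention for the sum $\sum_{i\sim j}$, since a naive reading of the left-hand computation produces each edge twice. The symmetrization step above makes the cancellation transparent and confirms the convention. No additional tools beyond the definition of $L$ and linearity of the inner product are needed.
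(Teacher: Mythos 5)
Your proof is correct and follows essentially the same route as the paper: both expand $\la Lf,f\ra$ as a double sum over ordered adjacent pairs and use the symmetry of adjacency to pair the terms $f(i)(f(i)-f(j))$ and $f(j)(f(j)-f(i))$ into $(f(i)-f(j))^2$ per edge. Your explicit averaging step just makes the ordered-versus-unordered bookkeeping more transparent than the paper's direct regrouping.
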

\begin{proof} This follows from a direct calculation.
\begin{align*}
\la Lf,f\ra &= \sum_{i=1}^{|G|}(Lf)(i)f(i)\\
&= \sum_{i=1}^{|G|}\left[f(i)\sum_{j\sim i}\left(f(i) - f(j)\right)\right]\\
&= \sum_{i\sim j}f(i)(f(i)-f(j)) + f(j)(f(j)-f(i))\\
&= \sum_{i\sim j}\left(f(i)-f(j)\right)^2
\end{align*}
\end{proof}
The key theorem about symmetric matrices is the spectral theorem.
\begin{theo}(Spectral Theorem) Let $V$ be an $n$ dimensional real inner product space. Suppose $A$ is a symmetric $n$ by $n$ real matrix. Let $\la\cdot,\cdot\ra$ denote the inner product on $V$. We can find a basis of eigenvectors $u_1$, $u_2$, $\cdots$, $u_n$ with real eigenvalues $\lam_1 \leq \lam_2 \leq \cdots \leq \lam_n$ such that
  \begin{equation*}
    \la u_i,u_j\ra = \left\{
    \begin{array}{rl}
    1 & \text{if } i = j\\
    0 & \text{if } i \neq j
    \end{array} \right.
    \end{equation*}
Such a basis is called an ``orthonormal basis.''
\end{theo}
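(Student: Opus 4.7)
The plan is to proceed by induction on $n = \dim V$, with the base case $n=1$ being immediate since any unit vector is an eigenvector. For the inductive step, the key reduction is: if $u_1$ is a real unit eigenvector of $A$ with eigenvalue $\lam_1$, then symmetry forces $W = u_1^\perp$ to be $A$-invariant, since for $w \in W$,
\[\la Aw, u_1\ra = \la w, Au_1\ra = \lam_1\la w, u_1\ra = 0.\]
Thus $A$ restricts to a symmetric operator on the $(n-1)$-dimensional inner product space $W$, and the inductive hypothesis supplies an orthonormal basis $u_2, \ldots, u_n$ of $W$ consisting of eigenvectors of $A|_W$ with real eigenvalues. Appending $u_1$ (which is orthogonal to all of $W$ and has unit length) yields the desired orthonormal basis of $V$.

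The entire argument thus reduces to producing one real eigenvalue/eigenvector pair for $A$. I would do this variationally: the continuous function $f(v) = \la Av, v\ra$ on the compact unit sphere $S = \{v \in V : \la v,v\ra = 1\}$ attains a minimum $\lam_1$ at some $u_1 \in S$. To show $u_1$ is an eigenvector, fix $w \in V$ with $w \perp u_1$, consider the perturbation $u_1 + tw$, normalize to stay on $S$, and differentiate at $t = 0$; using the symmetry of $A$ this derivative equals $2\la Au_1, w\ra$. Since $u_1$ is a minimizer the derivative must vanish for every such $w$, so $Au_1 - \lam_1 u_1$ is orthogonal to $u_1^\perp$ and hence a scalar multiple of $u_1$; pairing with $u_1$ identifies the scalar as $\lam_1$, giving $Au_1 = \lam_1 u_1$. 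Note further that at each inductive stage the new eigenvalue is produced as a minimum of the Rayleigh quotient over a subspace nested inside the previous one, so $\lam_2 = \la Au_2, u_2\ra \geq \min_{v\in S}\la Av,v\ra = \lam_1$ and similarly for later indices, yielding the ordering $\lam_1 \leq \lam_2 \leq \cdots \leq \lam_n$ for free.

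The main obstacle is the eigenvalue existence step: it requires combining compactness of $S$ (which uses the real inner product structure and finite-dimensionality) with a Lagrange-multiplier-style differentiation. A slicker, purely algebraic alternative is to apply the fundamental theorem of algebra to the characteristic polynomial $\det(tI - A)$ to obtain a complex eigenvalue $\lam$ with complex eigenvector $v$; pairing $Av = \lam v$ with $\bar v$ and using that $A$ is real and symmetric shows $\lam = \bar\lam$, so $\lam \in \RR$, and writing $v = x + iy$ with $x, y \in V$ at least one of $x, y$ is a nonzero real eigenvector. Either route produces the first eigenpair, after which the inductive structure above cleanly finishes the proof; the final statement about orthonormality then reads off from the construction, as $u_1$ is normalized by hypothesis and $u_2, \ldots, u_n$ are orthonormal by the inductive hypothesis applied in $W$.
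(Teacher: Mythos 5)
Your proposal is correct. Note that the paper does not prove this statement at all --- it simply cites page 114 of the Katznelson--Katznelson linear algebra text --- so there is no in-paper argument to compare against. Your write-up is the standard textbook proof: the induction on dimension via the $A$-invariance of $u_1^{\perp}$ is exactly right, and both of your routes to the first eigenpair (minimizing the Rayleigh quotient on the compact unit sphere, or extracting a real root of the characteristic polynomial from the identity $\lam\la v,\bar v\ra = \la Av,\bar v\ra = \la v, A\bar v\ra = \bar\lam\la v,\bar v\ra$) are complete and standard. The variational route has the incidental virtue of matching the minimax characterization the paper uses immediately afterward in Theorem \ref{minimax}.
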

See page 114 of \cite{n5} for a proof.

We also have a ``minimax'' principle.
\begin{theo}\label{minimax}We keep the set up of the previous theorem. Then
\[\lam_l = \min\left\{\frac{\la Av,v\ra}{\la v,v\ra} : v \in \text{ span}(u_1,\ u_2,\ \cdots,\ u_{l-1})^{\perp}\text{ and }v \neq 0\right\}\]
Furthermore, any such $v$ achieving the minimum must be an eigenfunction associated to $\lam_l$.
\end{theo}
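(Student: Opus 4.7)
The plan is to compute the Rayleigh quotient of an arbitrary admissible vector by expanding it in the orthonormal eigenbasis provided by the spectral theorem, and then recognize the resulting expression as a weighted average of the eigenvalues $\lam_l, \lam_{l+1}, \ldots, \lam_n$.

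First, let $v \in \Span(u_1,\ldots,u_{l-1})^\perp$ with $v \neq 0$. Since $\{u_1,\ldots,u_n\}$ is an orthonormal basis, I can write $v = \sum_{i=1}^n c_i u_i$ with $c_i = \la v, u_i\ra$. The orthogonality condition forces $c_i = 0$ for $i < l$, so $v = \sum_{i \geq l} c_i u_i$. Using $A u_i = \lam_i u_i$ and orthonormality, a direct computation gives
\[
\la Av, v\ra = \sum_{i \geq l} \lam_i c_i^2, \qquad \la v,v\ra = \sum_{i \geq l} c_i^2.
\]
Since $\lam_i \geq \lam_l$ for all $i \geq l$, termwise comparison yields $\la Av,v\ra \geq \lam_l \la v,v\ra$, hence the Rayleigh quotient is bounded below by $\lam_l$. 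Taking $v = u_l$ (which lies in the admissible set and is nonzero) gives equality, so $\lam_l$ is indeed the minimum.

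For the ``furthermore'' statement, suppose some admissible $v = \sum_{i\geq l} c_i u_i$ achieves the minimum $\lam_l$. Then $\sum_{i \geq l}(\lam_i - \lam_l) c_i^2 = 0$, and since each summand is nonnegative, $c_i = 0$ whenever $\lam_i > \lam_l$. Hence $v$ is a linear combination of eigenvectors all having eigenvalue $\lam_l$, so $Av = \lam_l v$, i.e.\ $v$ is itself an eigenfunction associated to $\lam_l$.

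There is no real obstacle here: the whole argument rests on the spectral theorem that has just been quoted, together with the elementary fact that a weighted average of numbers $\geq \lam_l$ equals $\lam_l$ only when all weights sitting on numbers strictly greater than $\lam_l$ vanish. The only mild bookkeeping point is to remember that the minimum over the admissible set is attained (not merely an infimum), which follows automatically once we exhibit $u_l$ as a minimizer.
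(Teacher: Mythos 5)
Your proof is correct and follows essentially the same route as the paper: expand $v$ in the orthonormal eigenbasis, observe that the orthogonality hypothesis kills the coefficients with index below $l$, and bound the resulting weighted average of eigenvalues below by $\lam_l$, with equality forcing the coefficients on eigenvalues strictly larger than $\lam_l$ to vanish. Nothing further is needed.
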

\begin{proof}Let $v \in V$. For every $j$
\begin{align*}
\left\la v - \sum_{i=1}^n\left\la v,u_i\right\ra u_i, u_j\right\ra &= \left\la v,u_j\right\ra - \left\la \sum_{i=1}^n \left\la v,u_i\right\ra u_i,u_j\right\ra\\
&= \left\la v,u_j\right\ra - \sum_{i=1}^n\left\la v,u_i\right\ra\left\la u_i,u_j\right\ra\\
&= \left\la v,u_j\right\ra - \left\la v,u_j\right\ra\\
&= 0
\end{align*}
Thus
\[v = \sum_{i=1}^n\left\la v,u_i\right\ra u_i\]
This implies
\begin{align*}
\left\la Av,v\right\ra &= \left\la A\left(\sum_{i=1}^n\left\la v,u_i\right\ra u_i\right),\sum_{j=1}^n\left\la v,u_j\right\ra u_j\right\ra\\
&=\left\la\sum_{i=1}^n\lam_i\left\la v,u_i\right\ra u_i,\sum_{j=1}^n\left\la v,u_j\right\ra u_j\right\ra\\
&=\sum_{i,j=1}^n\lam_i\left\la v,u_i\right\ra\left\la v,u_j\right\ra\left\la u_i,u_j\right\ra\\
&=\sum_{i=1}^n\lam_i\left\la v,u_i\right\ra^2\\
\end{align*}
Now suppose that we have $v \in V$ with $v \neq 0$ such that $i < l$ implies $\la v,u_i\ra = 0$. Then
\[v = \sum_{i=l}^n\left\la v,u_i\right\ra u_i\]
and
\begin{align*}
\left\la Av,v\right\ra &= \frac{\sum_{i=l}^n\lam_i\left\la v,u_i\right\ra^2}{\left\la v,v\right\ra}\\
                       &= \frac{\sum_{i=l}^n\lam_i\left\la v,u_i\right\ra^2}{\sum_{i=l}^n\left\la v,u_i\right\ra^2}\\
                       &\geq \lam_l\frac{\sum_{i=l}^n\left\la v,u_i\right\ra^2}{\sum_{i=l}^n\left\la v,u_i\right\ra^2}\\
                       &= \lam_l
\end{align*}
Note that the inequality in the second line is strict unless $\lam_k > \lam_l$ implies $\la v,u_k\ra = 0$. Hence we have equality if and only if $v$ is an eigenfunction of $\lam_l$.
\end{proof}

Let $\lam_i(G)$ denote the $i$th eigenvalue of the Laplacian matrix $L$ associated to a graph $G$. As a consequence of Theorem \ref{minimax} with $l=1$ and Lemma \ref{intByParts}
\[\lam_1(G) = \inf_{f\neq 0} \frac{\sum_{i\sim j}\Big(f(i)-f(j)\Big)^2}{\sum_i f(i)^2}\]
This immediately implies that all eigenvalues of $G$ are non-negative. By setting $f$ to be the constant function, we see that $\lam_1(G) = 0$ for all graphs $G$. Note that the analogous statement is false for the Laplacian on bounded open domains in $\RR^n$ with Dirichlet boundary conditions. One can then consider $\lam_2(G)$. Let $1$ denote the constant function on $G$. Theorem \ref{minimax} with $l=2$ and Lemma \ref{intByParts} gives
\[\lam_2(G) = \inf_{\la f,1\ra = 0}\frac{\sum_{i\sim j}\Big(f(i)-f(j)\Big)^2}{\sum_i f(i)^2}\]
$\lam_2(G)$ is called the ``algebraic connectivity'' of the graph $G$. For example, it is straightforward to show that $\lam_2(G) > 0$ if and only if $G$ is connected. See \cite{n2} for more along these lines. As one might guess, it turns out this version of the graph Laplacian is more naturally thought of as a discrete analogue for the Laplace-Beltrami operator on a closed manifold. Since we are interested in discretizing the Dirichlet Laplacian on bounded domains in $\RR^n$, we need to carefully think about the correct way to encode our boundary conditions.

\section{Preliminaries on the Combinatorial Dirichlet Laplacian}
In the continuous case, boundary conditions are critical for well posed eigenvalue questions. We must be sure that we have discretized the Dirichlet boundary conditions in the right fashion.

One approach assigns some subset of the vertices to be boundary vertices and then defines a real symmetric Dirichlet Laplacian matrix which acts on functions that vanish on the boundary vertices. A typical class of such discrete Faber-Krahn problems concerns finding a graph with boundary that minimizes the first Dirichlet eigenvalue among all graphs with $n$ interior vertices and $k$ boundary vertices. See the last chapter of \cite{n2} for a brief survey of results along these lines.

Another approach is to consider induced subgraphs of a larger graph, possibly infinite. Indeed, in the generalizations of the Faber-Krahn inequality to non-Euclidean spaces, one considers domains lying in some ambient Riemannian manifold \cite{n9}. We think of the larger graph as the ambient manifold. The main advantage of this formulation is that there is a natural, geometric way to define the boundary of a subgraph.

In passing we recall the definition of an induced subgraph. A subgraph $G \subset \Gamma$ is induced if
\[g_1 \sim_{\Gamma} g_2 \text{ and } g_1,\ g_2 \in G\Rightarrow g_1 \sim_G g_2\]
From this point on, all subgraphs will be assumed to have at least $2$ vertices, be finite, and be induced without further comment. Also, we will always assume that $\Gamma$ is regular.

We now define the boundary of a subgraph.
\begin{defi}\label{boundExtInt} If $G$ is a subgraph of $\Gamma$, then the boundary of $G$ is the set of all points in $\Gamma\nin G$ which are connected to $G$.
\[\pa G \equiv \{v \in \Gamma\nin G: v \sim_{\Gamma} g \text{ for some }g \in G\}\]
\end{defi}
We give an example to illustrate this. In the following diagram we take $\Gamma = \ZZ^2$. Let $G$ be the subgraph determined by the black points. Then the white points form $\pa G$.

\begin{center}
\begin{tikzpicture}
[interior/.style={circle,draw=black,fill=black, inner sep=0pt,minimum size = 2.5mm},
 boundary/.style={circle,draw=black,fill=white, inner sep=0pt,minimum size = 2.5mm}]
\draw[step=.5cm] (-1.99,-1.99) grid (2.49,2.49);
\draw (0,0) node [interior]{};
\draw (0,.5) node [interior]{};
\draw (.5,0) node [interior]{};
\draw (.5,.5) node [interior]{};
\draw (0,1) node [interior]{};
\draw (.5,1) node [interior]{};
\draw (1,.5) node [interior]{};
\draw (1,0) node [interior]{};
\draw (.5,-.5) node [interior]{};
\draw (0,-.5) node [interior]{};
\draw (-.5,0) node [interior]{};
\draw (-.5,.5) node [interior]{};
\draw (0,1.5) node [boundary]{};
\draw (.5,1.5) node [boundary]{};
\draw (1,1) node [boundary]{};
\draw (1.5,.5) node [boundary]{};
\draw (1.5,0) node [boundary]{};
\draw (1,-.5) node [boundary]{};
\draw (.5,-1) node [boundary]{};
\draw (0,-1) node [boundary]{};
\draw (-.5,-.5) node [boundary]{};
\draw (-1,0) node [boundary]{};
\draw (-1,.5) node [boundary]{};
\draw (-.5,1) node [boundary]{};
\end{tikzpicture}
\end{center}

Now we give some more definitions. Suppose we have a subgraph $G$. Let $\overline{G}$ be the union of $G$ and its boundary. Then the Dirichlet Laplacian will be an operator which takes the set of functions on $\overline{G}$ which vanish on $\pa G$, to itself. It is defined by
\begin{equation*}
    (L_Df)(g) \equiv \left\{
    \begin{array}{rl}
    (Lf)(g) & \text{if } g \in G\\
    0 & \text{if } g \in \pa G
    \end{array} \right.
\end{equation*}
where $L$ is the regular Laplacian for $\overline{G}$. We define $L_D$ this way so that eigenfunctions of $L_D$ satisfy $L_Df = \lam f$ in the ``interior'' of $\overline{G}$ and vanish on $\pa G$ . Equivalently, we can avoid mentioning $\overline{G}$ by defining $L_D$ to act on functions defined on $G$ by
\[(L_Df)(x) \equiv \deg_{\Gamma}(x)f(x) - \sum_{y\sim_G\ x}f(y)\]
We emphasize the contrast with $L$ which is defined by
\[(Lf)(x) \equiv \deg_{G}(x)f(x) - \sum_{y\sim_G\ x}f(y)\]
Of course functions on $\overline{G}$ which vanish on $\pa G$ are trivially identified with functions on $G$. It is easily established that, under this identification, both of the above definitions are equivalent. If we number the vertices of $G$ we get a matrix for $L_D$. Let $B'$ be the diagonal matrix given by $\deg_{\Gamma}I$. Then the matrix for $L_D$ with respect to our chosen basis is $B' - A$, where $A$ is the adjacency matrix defined in the previous section. In the case of $\Gamma = \ZZ^2$, $L_D = 4I - A$.

This is a real symmetric matrix and thus the spectral theorem applies. For a subgraph $G$, we denote the smallest eigenvalue of $L_D$ by $\lam_D(G)$. The following probabilistic interpretation of $\lam_D(G)$ provides a useful intuitive crutch. Consider the following discrete Markov process: For our initial setup we place a ``particle'' at some vertex of $G$. Then on each iteration, the particles moves with equal probability to one of the $\Gamma$-neighbors of its current location. If a particle moves to a point in $\pa G$, then the particle ``falls off'' the graph and no longer occupies any vertices. Otherwise we keep iterating. Fix a vertex $i \in G$. Let $E_G^{(i)}$ be the random variable which gives the iteration of the random walk when the particle starting at $i$ falls off of $G$. We will prove
\begin{prop}\label{prob}Let $d$ be the degree of the vertices. For any two connected subgraphs $G$ and $H$ with $i \in G$, and $j \in H$, and $\lam_D(G)$, $\lam_D(H) < d$
\[\frac{\mathbb{P}\left(E_G^{(i)} \geq k\right)}{\mathbb{P}\left(E_H^{(j)}\geq k\right)} \sim \left(\frac{|\lam_D(G)-d|}{|\lam_D(H)-d|}\right)^{k}\]
\end{prop}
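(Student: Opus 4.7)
The plan is to reduce the claim to a spectral analysis of the sub-stochastic transition matrix of the random walk restricted to $G$. Since the particle at $i\in G$ moves with probability $1/d$ to each of its $d$ neighbors in $\Gamma$ and dies upon entering $\pa G$, the relevant operator on $G$ is $P_G \equiv \frac{1}{d}A_G$, where $A_G$ is the adjacency matrix of the induced subgraph $G$. The identity $L_D = dI - A_G$ gives $P_G = I - \frac{1}{d}L_D$, so $P_G$ is diagonalized by the orthonormal eigenbasis $\{\varphi_m\}_{m=1}^{|G|}$ of $L_D$ supplied by the spectral theorem, with eigenvalues $\rho_m = 1 - \mu_m/d$ where $\mu_1 = \lam_D(G) \leq \mu_2 \leq \cdots \leq \mu_{|G|}$.

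Next I would observe that the event $\{E_G^{(i)}\ge k\}$ occurs precisely when the particle has survived the first $k-1$ steps, so that
\[
\mathbb{P}\!\left(E_G^{(i)}\ge k\right)
\;=\; \la e_i,\,P_G^{\,k-1}\mathbf{1}\ra
\;=\; \sum_{m=1}^{|G|} \rho_m^{\,k-1}\,\varphi_m(i)\,\la\mathbf{1},\varphi_m\ra,
\]
where $e_i$ is the standard basis vector at $i$ and $\mathbf{1}$ is the all-ones vector on $G$. The hypothesis $\lam_D(G)<d$ forces $\rho_1 = (d-\lam_D(G))/d > 0$, and since $L_D$ has operator norm at most $2d$ we have $|\rho_m|\le 1$. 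A Perron--Frobenius argument for the irreducible non-negative matrix $A_G$ (using that $G$ is connected) allows us to take $\varphi_1 > 0$ everywhere on $G$, so the $m=1$ term is strictly positive and of order exactly $\rho_1^{\,k-1}$. Taking the ratio of the analogous expressions for $G$ and $H$ and keeping only the leading terms then produces
\[
\frac{\mathbb{P}\!\left(E_G^{(i)}\ge k\right)}{\mathbb{P}\!\left(E_H^{(j)}\ge k\right)}
\;\sim\; \left(\frac{\rho_1(G)}{\rho_1(H)}\right)^{\!k}
\;=\; \left(\frac{d-\lam_D(G)}{d-\lam_D(H)}\right)^{\!k}
\;=\; \left(\frac{|\lam_D(G)-d|}{|\lam_D(H)-d|}\right)^{\!k},
\]
which is the stated conclusion.

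The main obstacle is the bipartiteness of $\ZZ^2$: every induced subgraph $G\subset\ZZ^2$ inherits a bipartition, which forces the spectrum of $A_G$, and therefore of $P_G$, to be symmetric about $0$. Thus $\rho_{|G|}=-\rho_1$ is simultaneously a largest-magnitude eigenvalue, with eigenvector $\varphi_{|G|}$ obtained from $\varphi_1$ by flipping signs on one side of the bipartition, and its contribution to the sum above oscillates in sign with the parity of $k-1$. This means the ``$\sim$'' in the proposition must be understood as equivalence up to multiplicative constants (bounded above and below, uniformly in $k$) rather than as a strict asymptotic limit.

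To make the bounded-ratio form rigorous I would (i) use the strict spectral gap $|\rho_m|<\rho_1$ for $2\le m\le |G|-1$ (Perron--Frobenius simplicity) to show that all non-extremal terms are $o(\rho_1^{\,k-1})$, and (ii) combine the positivity of $\mathbb{P}(E_G^{(i)}\ge k)$ with uniform bounds on $\varphi_1(i),\varphi_{|G|}(i)$ and their pairings with $\mathbf{1}$ to show that the oscillating prefactor stays bounded away from $0$ as $k$ varies. Writing $\mathbb{P}(E_G^{(i)}\ge k)=\rho_1(G)^{k-1}c_k(G,i)$ with $c_k(G,i)$ bounded above and below, and similarly for $H$, the proposition then follows by dividing and absorbing the $k$-independent constants.
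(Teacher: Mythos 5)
Your approach is fundamentally the same as the paper's: both diagonalize the sub-stochastic transition operator $P_D = \tfrac{1}{d}A_G = I - \tfrac{1}{d}L_D$, expand $P_D^{k}e_i$ in the orthonormal eigenbasis, and extract the top spectral scale $\rho_1^k = ((d-\lam_D)/d)^k$ via Perron--Frobenius. The one structural difference is that the paper first proves an $\ell^2$ statement (Proposition \ref{connectedProp2}, about $\|P_D(G)^k v\|_2$) and then passes to $\ell^1 = $ probability via equivalence of norms on the fixed finite-dimensional space; you work with $\la e_i, P_G^{k-1}\mathbf 1\ra$ in $\ell^1$ directly. The $\ell^2$ route is slicker precisely on the point you flag, because $\rho_1^{2k}$ and $(-\rho_1)^{2k}$ add constructively, so the top-of-spectrum oscillation never appears.

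Where your proposal is actually \emph{sharper} than the paper's: you correctly identify that every induced subgraph of $\ZZ^2$ is bipartite, so $-\rho_1$ is also an eigenvalue of $P_D$, and the Perron--Frobenius theorem as stated in the paper (irreducibility, not primitivity) does \emph{not} give a unique eigenvalue of maximal modulus. The paper's proof of Proposition \ref{connectedProp2} asserts ``$\lam_n > |\lam_i|$ for $i\neq n$,'' which is false here; this happens to be harmless in the $\ell^2$ argument (both $\pm\rho_1$ contribute $\rho_1^{2k}$), but it is a genuine slip, and you are right that ``$\sim$'' can only mean equivalence up to $k$-independent multiplicative constants, not a ratio tending to $1$. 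Indeed even the paper's own chain of reasoning only delivers the bounded-ratio version, since the constants $|\la v,u_n\ra|$ depend on $G,H,i,j$.

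One step in your sketch deserves tightening. In your $\ell^1$ expansion the leading contribution has the form $\rho_1^{k-1}\bigl(a + (-1)^{k-1}b\bigr)$ with $a=\varphi_1(i)\la\mathbf 1,\varphi_1\ra>0$ and $b=\varphi_{|G|}(i)\la\mathbf 1,\varphi_{|G|}\ra$. Positivity of the probability for all $k$ only forces $|b|\le a$; if $|b|=a$ the leading term vanishes for one parity of $k$ and the asserted asymptotic fails for that parity. You should therefore argue $|b|<a$ strictly. This does follow: writing $\varphi_{|G|}$ as $\varphi_1$ with the sign flipped on one side of the bipartition $V_1\cup V_2$, one finds $b/a = \pm\bigl(\sum_{V_1}\varphi_1 - \sum_{V_2}\varphi_1\bigr)/\bigl(\sum_{V_1}\varphi_1 + \sum_{V_2}\varphi_1\bigr)$, and since both partial sums of the positive function $\varphi_1$ are strictly positive, $|b/a|<1$. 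Alternatively, you can sidestep this entirely by adopting the paper's $\ell^2$-then-norm-equivalence route, which renders the oscillation invisible. Either way the bounded-ratio form of the proposition holds; just make the strict inequality explicit.

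Finally, a small bookkeeping note: you use $P_G^{k-1}$ where the paper writes $P_D^{k}$; your indexing ($E_G^{(i)}\ge k$ iff the particle survives the first $k-1$ steps) is the more careful one, and the discrepancy is immaterial to the asymptotic claim.
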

Later we will show that $\lam_D(G),\ \lam_D(H) \in (0,d]$ for all subgraphs. Hence we have
\begin{prop}\label{prob0}Assuming that $G$ and $H$ are connected, and $\lam_D(G) < \lam_D(H)$, we have
\[\frac{\mathbb{P}\left(E_G^{(i)} \geq k\right)}{\mathbb{P}\left(E_H^{(j)}\geq k\right)} \to \infty \text{ as }k\to\infty\]
\end{prop}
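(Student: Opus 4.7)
The plan is to deduce Proposition \ref{prob0} directly from the asymptotic formula of Proposition \ref{prob}. First I would confirm that Proposition \ref{prob} actually applies, i.e.\ that $\lam_D(G), \lam_D(H) < d$ strictly. Since $G$ and $H$ are assumed connected and (by our standing convention) have at least two vertices, each contains at least one edge. Writing $L_D = dI - A$, we have $\lam_D = d - \lambda_{\max}(A)$, and on any graph with an edge the adjacency matrix has a strictly positive largest eigenvalue: for example, the vector supported equally on the two endpoints of any edge gives a positive value of the Rayleigh quotient $\la Av,v\ra/\la v,v\ra$. Hence $\lam_D(G), \lam_D(H) \in (0,d)$.

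With strict inequality in hand, the rest is a one-line computation. From $\lam_D(G) < \lam_D(H) < d$ we obtain
\[r \;\equiv\; \frac{|\lam_D(G)-d|}{|\lam_D(H)-d|} \;=\; \frac{d-\lam_D(G)}{d-\lam_D(H)} \;>\; 1,\]
so $r^k \to \infty$ as $k \to \infty$. Proposition \ref{prob} asserts that the ratio $\mathbb{P}(E_G^{(i)}\geq k)/\mathbb{P}(E_H^{(j)}\geq k)$ is asymptotically equivalent to $r^k$, and so this ratio likewise tends to $\infty$.

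The only real obstacle is the strict inequality $\lam_D < d$ needed to invoke Proposition \ref{prob}; once it is in place, Proposition \ref{prob0} is essentially a two-line corollary. I would expect the paper to record this strict inequality as a separate small fact about the Dirichlet Laplacian on connected subgraphs (of the sort mentioned parenthetically in the text preceding the statement), so that Proposition \ref{prob0} reads as a clean consequence of the asymptotic formula.
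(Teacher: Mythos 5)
Your proposal is correct and matches the paper's approach: the paper likewise treats Proposition \ref{prob0} as an immediate corollary of Proposition \ref{prob} together with the bound on $\lam_D$. In fact you are slightly more careful than the paper, which only records $\lam_D \in (0,d]$ before invoking Proposition \ref{prob}; your observation that a connected subgraph with at least two vertices forces the strict inequality $\lam_D < d$ (via a positive largest adjacency eigenvalue, or equivalently via Perron--Frobenius applied to the transitive matrix $P_D$) closes that small gap.
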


To understand why this might be an important graph invariant, let us consider a motivating example from computer science. Computer networks are often modeled by graphs where computers are represented by the vertices and edges represent connections between the computers where information exchange can occur. Now consider the following problem: There is a large network (graph) of computers and we have enough money to buy $n$ of the computers. We want information to travel quickly along our group of computers. However, we know nothing about computers that we do not own. Hence, there is always a  possibility of attack from computers in the boundary of our graph. Now we let the subgraph $G$ represent our network, the random walk represent the flow of information along $G$, and $E_G^{(i)}$ represent the event that information is stolen from an outside computer. Then, considering Propositions \ref{prob} and \ref{prob0}, it is natural to think of a lower $\lam_D$ as implying that the network is more ``securely connected.''

Before proving Propositions \ref{prob} and \ref{prob0}, we will need to recall some facts from the theory of matrices with non-negative entries. First we give the definition of a transitive matrix.
\begin{defi}Suppose $M$ is an $n\times n$ matrix with non-negative entries. We say $M$ is transitive if there exists some $N \geq 1$ such that $\sum_{k=1}^N M^k$ has all positive entries.
\end{defi}
For matrices whose diagonal entries are positive, we can formulate transitivity in graph theoretic terms.
\begin{prop}\label{transGraph}Let $M$ be a $n\times n$ matrix with non-negative entries. Let $H$ be a directed graph with $n$ vertices labeled $1$, $2$, $\cdots$, and $n$. Set $(i,j)$ to be an edge of $H$ if $M_{ji} > 0$. We claim that $M$ is transitive if $H$ is connected.
\end{prop}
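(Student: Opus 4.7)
The plan is to translate the matrix-power condition into a statement about directed paths in $H$, and then combine (strong) connectedness with the self-loops that come from the positive-diagonal hypothesis to produce paths of any desired length.

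First I would expand
\[(M^{k})_{ij} = \sum_{i_{1},\ldots,i_{k-1}} M_{i,i_{1}} M_{i_{1},i_{2}} \cdots M_{i_{k-1},j}.\]
Because every entry of $M$ is non-negative, this sum is positive if and only if there exists a sequence $i = i_{0},i_{1},\ldots,i_{k} = j$ with $M_{i_{s},i_{s+1}} > 0$ for every $s$. Under the convention $(a,b) \in H \Leftrightarrow M_{ba} > 0$, the condition $M_{i_{s},i_{s+1}} > 0$ says precisely that $(i_{s+1},i_{s})$ is a directed edge of $H$. Hence $(M^{k})_{ij} > 0$ is equivalent to the existence of a directed path in $H$ of length exactly $k$ from $j$ to $i$.

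Next I would invoke the standing hypothesis that $M$ has positive diagonal, which means every vertex of $H$ carries a self-loop. Interpreting ``$H$ connected'' in the natural directed sense --- every vertex reachable from every other --- for any ordered pair $(i,j)$ there is a directed path in $H$ from $j$ to $i$ of length at most $n-1$. Appending self-loops at $i$ extends such a path to one of length exactly $N$ for any $N \geq n-1$. Taking $N = n$, we conclude $(M^{N})_{ij} > 0$ for every $i,j$, so already the single summand $M^{N}$ of $\sum_{k=1}^{N} M^{k}$ has all entries positive, and $M$ is transitive.

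The main source of care, rather than any serious obstacle, is tracking the reversal of edge directions: the convention $(i,j) \in H \Leftrightarrow M_{ji} > 0$ causes the path produced by a positive matrix-power entry to run from $j$ to $i$ rather than $i$ to $j$, so one must apply strong connectedness in the matching direction. Once this is handled, the self-loops absorb any length mismatch and the argument reduces to bookkeeping; no further analytic input is needed.
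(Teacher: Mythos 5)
Your translation between positive entries of $M^k$ and directed paths of length $k$ in $H$ is correct, and it is exactly the first half of the paper's argument (including the careful handling of the reversed edge convention). The gap is in the last step, where you invoke ``the standing hypothesis that $M$ has positive diagonal'' to pad every path with self-loops up to a common length $N=n$ and conclude that the single power $M^N$ is entrywise positive. The proposition as stated assumes only that $M$ has non-negative entries, and --- decisively --- the matrix to which it is applied in the paper, $P_D = (-1/d)(L_D - dI) = A/d$ with $A$ the adjacency matrix, has \emph{zero} diagonal, so $H$ has no self-loops and the padding is unavailable. The conclusion you reach is then actually false in the intended application: for a connected subgraph with two vertices, $P_D$ is $1/4$ times the coordinate-swap matrix, whose powers alternate between multiples of the identity and multiples of itself, so no single power $M^N$ is entrywise positive.

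The repair costs nothing and is what the paper does: transitivity only requires that $\sum_{k=1}^{N} M^{k}$ be entrywise positive, so you never need one exponent to work for all pairs simultaneously. For each ordered pair $(i,j)$, connectedness gives a directed path of some length $l_{ij} \leq n-1$ in the appropriate direction, hence $(M^{l_{ij}})_{ij} > 0$; taking $N = \max_{i,j} l_{ij}$, every entry of $\sum_{k=1}^{N} M^{k}$ is positive because each summand is entrywise non-negative and, for each $(i,j)$, at least one summand contributes a strictly positive term. With that substitution for your final paragraph, the argument is complete and agrees with the paper's.
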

\begin{proof}Let $e_i \in \RR^n$ be the standard basis vector with $1$ in the $i$th slot and $0$ everywhere else. By construction of $H$, $(i,j)$ is an edge if and only if $(Me_i)_j > 0$. In general, for any $v \in \RR^n$ with non-negative entries, $(Mv)_j > 0$ if and only if for some $i$ both $v_i > 0$ and $M_{ji} > 0$. Thus $(M^ke_i)_j > 0$ if and only if we can find $s_1$, $\cdots$, $s_{k-1}$ such that $M_{s_1i}$, $M_{s_2s_1}$, $\cdots$, $M_{s_{k-1}s_{k-2}}$, and $M_{js_{k-1}}$ are all positive. Equivalently, $(M^ke_i)_j > 0$ if and only if there exists some $s_1$, $\cdots$, $s_{k-1}$ such that $(i, s_1)$, $(s_1,s_2)$, $\cdots$, $(s_{k-2},s_{k-1})$, and $(s_{k-1},j)$ are all edges in $H$. Now suppose we can find $N$ such that $\sum_{k=1}^NM^k$ has all positive entries. Choose two vertices $i$ and $j$ in $H$. We can find some $k$ such that $(M^k)_{ji} > 0$, i.e. $(M^ke_i)_j > 0$. From the reasoning above this gives a path from $i$ to $j$. Now suppose that $H$ is connected. We want to find $N$ so that $\sum_{k=1}^NM^k$ has all positive entries. It is sufficient to produce an $l$ for every $(i,j)$ such that $M^l_{ji} > 0$. Since $H$ is connected we can find $s_1$, $\cdots$, $s_{k-1}$ such that $(i, s_1)$, $(s_1,s_2)$, $\cdots$, $(s_{k-2},s_{k-1})$, and $(s_{k-1},j)$ are all edges in $H$. Then by the reasoning above, this implies $M^k_{ji} > 0$ and we are done.
\end{proof}
\begin{defi}Let $G$ be a subgraph of $\Gamma$ and set $d = \deg_{\Gamma}$. We define $P_D = (-1/d)(L_D-dI)$. If we wish to emphasize the dependence on $G$ we write $P_D(G)$.
\end{defi}
\begin{coro}\label{pdtrans}Suppose $G$ is a connected subgraph. Then $P_D$ is transitive.
\end{coro}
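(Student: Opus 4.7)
The plan is to reduce the statement directly to Proposition \ref{transGraph}. First I would compute $P_D$ explicitly. Since $\Gamma$ is $d$-regular, the matrix $B'$ appearing in the definition of $L_D$ is just $dI$, so $L_D = dI - A$, where $A$ is the adjacency matrix of $G$ (with zero diagonal). Substituting into the definition $P_D = (-1/d)(L_D - dI)$ gives $P_D = A/d$.

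Once this identification is in hand, the hypotheses of Proposition \ref{transGraph} essentially write themselves. All entries of $P_D = A/d$ are either $0$ or $1/d$, hence non-negative, so $P_D$ is the kind of matrix to which the proposition applies. I would then construct the associated directed graph $H$ by the rule $(i,j)$ is an edge of $H$ iff $(P_D)_{ji} > 0$, and observe that $(P_D)_{ji} > 0$ is equivalent to $j \sim_G i$. Because the edge relation $\sim_G$ is symmetric, $H$ contains the directed edge $(i,j)$ iff it contains $(j,i)$; in other words, $H$ is obtained from $G$ by replacing each undirected edge with a pair of oppositely oriented directed edges.

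It therefore remains to check that this directed graph $H$ is connected in the sense required by Proposition \ref{transGraph}, namely that for every ordered pair of vertices $(i,j)$ there is a directed path in $H$ from $i$ to $j$. Since $G$ is connected by assumption, there is an undirected path from $i$ to $j$ in $G$, and each edge of that path lifts to a directed edge in $H$ along the appropriate orientation. This produces the required directed path, so Proposition \ref{transGraph} applies and yields the transitivity of $P_D$.

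The argument is short and essentially bookkeeping: the only thing to verify is the identity $P_D = A/d$, and the only thing to be careful about is that the directed graph constructed from a non-negative matrix need not a priori be strongly connected even if $G$ is connected. The symmetry of $A$ (coming from the fact that $G$ is an undirected graph) is what resolves this, so I do not anticipate any real obstacle.
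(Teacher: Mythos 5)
Your proposal is correct and takes essentially the same route as the paper: it identifies $P_D = A/d$, observes that the directed graph $H$ of Proposition \ref{transGraph} is just $G$ with each undirected edge replaced by two oppositely oriented edges, and invokes the connectivity of $G$. The paper's proof is a terser version of exactly this argument.
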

\begin{proof} By construction, for $i \neq j$, $(P_D)_{ij} > 0$ if and only if $i \sim_G j$. Hence the graph $H$ of Proposition \ref{transGraph} is simply $G$. We immediately conclude that $P_D$ is transitive.
\end{proof}

The most important theorem about transitive matrices is the Perron-Frobenius Theorem.
\begin{theo}\label{perronFrobenius}(Perron-Frobenius) Let $A$ be a transitive non-negative matrix. From the fundamental theorem of algebra, $A$ has $n$ possibly complex eigenvalues $\lam_1$, $\cdots$, $\lam_n$. Let $\rho = \max_i |\lam_i|$. Then we claim that $\rho$ is an eigenvalue itself. Furthermore, $\rho$ has algebraic multiplicity $1$ and has a $1$ dimensional eigenspace. The eigenspace is spanned by a vector with all positive entries.
\end{theo}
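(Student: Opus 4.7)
The plan is to establish the four assertions in sequence: existence of a positive eigenvector, equality of its eigenvalue with the spectral radius $\rho$, geometric simplicity, and algebraic simplicity. The central tool throughout is the following bootstrap consequence of transitivity already proved in the excerpt: if $u$ has non-negative entries and $u \neq 0$, then $\sum_{k=1}^N A^k u$ has all strictly positive entries.

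For existence I would apply Brouwer's fixed point theorem to the continuous map $F(v) = Av/\|Av\|_1$ on the simplex $\Delta = \{v \geq 0 : \|v\|_1 = 1\}$. This map is well-defined because the bootstrap applied to $v \in \Delta$ forces $\sum_{k=1}^N A^k v$ to have positive entries, hence $Av \neq 0$. The fixed point $v_0$ satisfies $Av_0 = \lambda_0 v_0$ with $\lambda_0 = \|Av_0\|_1 > 0$, and applying the bootstrap once more to $v_0$ shows that $v_0 > 0$ strictly (since $\sum_{k=1}^N A^k v_0 = (\sum_{k=1}^N \lambda_0^k) v_0$). Running the same argument on $A^T$, which is transitive because transposition commutes with taking powers and sums, produces a strictly positive left eigenvector $u_0 > 0$ with some eigenvalue $\mu_0 > 0$; the identity $\lambda_0 u_0^T v_0 = u_0^T A v_0 = \mu_0 u_0^T v_0$ combined with $u_0^T v_0 > 0$ forces $\mu_0 = \lambda_0$.

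To identify $\lambda_0$ with $\rho$, I would take any (possibly complex) eigenvalue $\mu$ with eigenvector $w$, note the componentwise inequality $|\mu||w| = |Aw| \leq A|w|$ (triangle inequality plus non-negativity of $A$), and pair with the positive left eigenvector: $|\mu|\, u_0^T |w| \leq u_0^T A |w| = \lambda_0 u_0^T |w|$. Since $u_0 > 0$ and $|w| \neq 0$, this yields $|\mu| \leq \lambda_0$, so $\lambda_0 = \rho$.

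For geometric simplicity, suppose $w$ is another eigenvector for $\rho$; taking real and imaginary parts (both of which remain eigenvectors because $\rho \in \RR$) reduces to $w$ real. Set $t^* = \max_i w_i/(v_0)_i$, so that $t^* v_0 - w \geq 0$ and has at least one zero entry. Since $A(t^* v_0 - w) = \rho(t^* v_0 - w)$, if this vector were nonzero the bootstrap would make $(\sum_{k=1}^N \rho^k)(t^* v_0 - w)$ strictly positive, contradicting the zero entry; hence $w = t^* v_0$. For algebraic simplicity, the clean observation is that a Jordan block of size $\geq 2$ at $\rho$ would produce $u$ with $(A - \rho I) u = v_0$, and pairing with $u_0$ yields the contradiction $u_0^T v_0 = u_0^T (A - \rho I) u = (A^T u_0 - \rho u_0)^T u = 0$. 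The main conceptual obstacle is the existence step: transitivity must be used twice — first to ensure $Av \neq 0$ so that Brouwer applies, and again to upgrade the non-negative fixed point to a strictly positive one — after which everything else follows by bilinear pairings against the positive left eigenvector.
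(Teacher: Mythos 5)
The paper does not actually prove Theorem \ref{perronFrobenius}; it simply cites section 8.3 of the Katznelson--Katznelson linear algebra text. So there is no in-paper argument to compare against, and your proposal has to stand on its own. It does: this is a complete and standard self-contained proof. The existence step via Brouwer on the simplex is sound (the bootstrap correctly rules out $Av = 0$, so $F$ is continuous, and the identity $\sum_{k=1}^N A^k v_0 = \bigl(\sum_{k=1}^N \lambda_0^k\bigr) v_0$ upgrades the fixed point to a strictly positive eigenvector); the transpose argument legitimately yields a positive left eigenvector with the same eigenvalue; the pairing $|\mu|\, u_0^T |w| \leq \lambda_0 u_0^T |w|$ identifies $\lambda_0$ with $\rho$ (using that $\lambda_0$ is itself an eigenvalue for the reverse inequality); and the $t^* = \max_i w_i/(v_0)_i$ comparison plus the bootstrap gives geometric simplicity. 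The one step you should spell out slightly more is algebraic simplicity: a Jordan block of size $\geq 2$ gives a chain $(A-\rho I)u_2 = u_1 \neq 0$, $(A-\rho I)u_1 = 0$, and you need the already-proved geometric simplicity to normalize $u_1$ to $v_0$ before pairing against $u_0$ — but that is exactly the order in which you present things, so the logic closes. The only cosmetic mismatch with the paper is that the theorem is stated for a transitive matrix on $n$ eigenvalues counted with multiplicity, and your argument covers all four asserted conclusions, so nothing is missing.
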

See section 8.3 of \cite{n5} for a proof.
\begin{coro}\label{eig}If $G$ is a connected subgraph of $\Gamma$, then the eigenspace associated to $\lam_D(G)$ is spanned by a function $f$ whose value at every vertex is positive. Additionally, $\lam_D \in (0,d]$.
\end{coro}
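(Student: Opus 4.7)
The plan is to deduce this entirely from the Perron--Frobenius theorem applied to the non-negative matrix $P_D = (1/d)A = I - L_D/d$ introduced in the preceding definition, which is transitive by Corollary \ref{pdtrans} since $G$ is connected. By Theorem \ref{perronFrobenius}, the spectral radius $\rho$ of $P_D$ is an eigenvalue with algebraic multiplicity one and a one-dimensional eigenspace spanned by a strictly positive vector $f$. The identity $L_D = dI - dP_D$ shows that $f$ is simultaneously an eigenvector of $L_D$ with eigenvalue $d(1-\rho)$. Because $P_D$ is real symmetric, all its eigenvalues are real, and Perron--Frobenius forces the largest real eigenvalue of $P_D$ to coincide with $\rho$; so the smallest eigenvalue of $L_D$ is $d(1-\rho)$, and this is the one whose eigenspace is the Perron line. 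This immediately gives the first claim of the corollary.

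For the bounds $\lam_D(G) \in (0,d]$, I would rewrite them as $0 \le \rho < 1$. The non-negativity $\rho \ge 0$ is free from Perron--Frobenius applied to a non-negative matrix, and it gives $\lam_D(G) \le d$. The strict upper bound $\rho < 1$ is a discrete maximum principle: let $v^* \in G$ be a vertex at which the positive eigenfunction $f$ attains its maximum. Then
\[
\rho f(v^*) \;=\; (P_Df)(v^*) \;=\; \frac{1}{d}\sum_{u \sim_G v^*} f(u) \;\le\; \frac{\deg_G(v^*)}{d} f(v^*) \;\le\; f(v^*),
\]
so $\rho \le 1$. If equality held, both estimates would be sharp: first $\deg_G(v^*) = d$, so every $\Gamma$-neighbor of $v^*$ lies in $G$, and second $f(u) = f(v^*)$ at every such neighbor $u$. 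Each such $u$ is then again a maximizer, and iterating this along edges of $G$ uses the connectedness of $G$ to force $\deg_G(v) = d$ for every vertex $v \in G$. But then no vertex of $G$ has a neighbor in $\pa G$, and since $\Gamma$ (e.g.\ $\ZZ^2$) is connected and infinite while $G$ is finite, this is a contradiction; hence $\rho < 1$ strictly and $\lam_D(G) > 0$.

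I expect the only genuine obstacle to be the strict lower bound $\lam_D(G) > 0$. Everything else is a mechanical translation between the spectra of $P_D$ and $L_D$ via the affine relation $L_D = d(I - P_D)$, together with a direct invocation of Perron--Frobenius through Corollary \ref{pdtrans}. The strict bound requires one to combine three ingredients that are implicit rather than stated: the symmetry of $P_D$ (so that the Perron eigenvalue is the genuine largest eigenvalue, not merely the spectral radius), connectedness of $G$ (to propagate the maximum along edges), and the fact that $G$ is a proper finite subgraph of the ambient regular graph $\Gamma$ (so that boundary-free configurations are excluded). Once these are lined up, the maximum-principle argument above closes the proof.
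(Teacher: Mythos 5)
Your argument for the positivity of the principal eigenfunction and for $\lam_D \leq d$ is the same as the paper's: pass to $P_D = A/d = I - L_D/d$, invoke transitivity via Corollary \ref{pdtrans}, and read off the Perron eigenvector; the translation $\lam_D = d(1-\rho)$ together with $\rho \geq 0$ gives the upper bound. Where you genuinely diverge is the strict lower bound $\lam_D > 0$: the paper's proof of the corollary never establishes it --- it stops at ``we must have $d - \lam_D \geq 0$'' --- whereas you supply a discrete maximum-principle argument showing $\rho < 1$ by propagating equality at a maximizer of $f$ along edges of the connected graph $G$ until every vertex would need full degree $d$, contradicting $\pa G \neq \emptyset$. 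Your argument is correct (the needed hypotheses --- symmetry of $P_D$, connectedness of $G$, and nonemptiness of $\pa G$ for a finite subgraph of the infinite regular graph $\Gamma$ --- are all available in the paper's setting), so you have in fact closed a gap rather than merely reproduced the proof. A marginally quicker route to the same point is Theorem \ref{rayleighQuotientD}: if $\lam_D = 0$ then the minimizing $f$ satisfies $f(x) = f(y)$ across every edge of $\overline{G}$, hence is constant on the connected graph $\overline{G}$ and therefore identically zero since it vanishes on $\pa G$, a contradiction; but your maximum-principle version is equally valid and stays entirely within the Perron--Frobenius framework.
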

\begin{proof}
Let the eigenvalues of $L_D$ be $\lam_D = \lam_1 \leq \lam_2 \leq \cdots \leq \lam_n$. Then the eigenvalues of $4P_D$ are $(d-\lam_n \leq d - \lam_{n-1}\leq \cdots\leq d - \lam_1 = d - \lam_D$. Furthermore, every eigenfunction for $G$ associated to $\lam_D$ is also an eigenfunction for $4P_D$ associated to $d - \lam_D$. Since $4P_D$ is transitive, positivity of $f$ follows from a direct application of the Perron-Frobenius Theorem. Also, we must have $d-\lam_D \geq 0$
\end{proof}
For a connected graph $G$, ``a principle eigenfunction for $G$'' will refer to any positive eigenfunction $f$ associated to $\lam_D$ with $\sum_x f^2(x) = 1$.

The following underlies Proposition \ref{prob}.
\begin{prop}\label{connectedProp2} For any two connected subgraphs $G$ and $H$
\[\frac{\vv P_D(G)^kv\vv_2}{\vv P_D(H)^kv\vv_2} \sim \left(\frac{|\lam_D(G) - d|}{|\lam_D(H) - d)|}\right)^{k}\]
\end{prop}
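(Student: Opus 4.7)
The plan is to diagonalize $P_D$ via the spectral theorem and read off the asymptotics of $\vv P_D(G)^k v\vv_2$ from its largest-modulus eigenvalue. Since $L_D$ is real symmetric, so is $P_D(G)=I-L_D/d$, and the spectral theorem provides an orthonormal eigenbasis $u_1^G,\ldots,u_{|G|}^G$ of $\RR^{|G|}$ with real eigenvalues $\mu_1^G\ge\cdots\ge\mu_{|G|}^G$. Because the eigenvalues of $P_D$ are precisely the numbers $(d-\lam)/d$ as $\lam$ ranges over the spectrum of $L_D$, the top eigenvalue is $\mu_1^G=(d-\lam_D(G))/d$. Combining Corollary~\ref{pdtrans} with Theorem~\ref{perronFrobenius}, $\mu_1^G$ is the spectral radius of $P_D(G)$, it is simple, and its eigenvector $u_1^G$ may be chosen strictly positive. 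Consequently $|\mu_i^G|\le\mu_1^G$ for all $i$, with equality possible only when $\mu_i^G=\pm\mu_1^G$.

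Expanding the test vector as $v=\sum_i c_i u_i^G$, orthonormality gives
\[\vv P_D(G)^k v\vv_2^2 \;=\; \sum_{i=1}^{|G|} c_i^2(\mu_i^G)^{2k}.\]
Let $M_G=\{i:|\mu_i^G|=\mu_1^G\}$ and $S_G=\sum_{i\in M_G}c_i^2$. Factoring out $(\mu_1^G)^{2k}$, every term with $i\notin M_G$ carries an additional factor $(|\mu_i^G|/\mu_1^G)^{2k}$ that decays geometrically in $k$, so
\[\vv P_D(G)^k v\vv_2 \;=\; (\mu_1^G)^k\sqrt{S_G+o(1)}\qquad (k\to\infty).\]
Since $u_1^G>0$ componentwise, $c_1=\la v,u_1^G\ra\neq 0$ whenever $v$ has nonnegative entries not all zero---the cases needed to deduce Proposition~\ref{prob}, e.g.\ $v=e_i$ or the constant vector $1$---and then $S_G>0$. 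Running the identical argument on $H$ yields $\vv P_D(H)^k v\vv_2 \sim \sqrt{S_H}\,(\mu_1^H)^k$; dividing, with the factors of $d^k$ in $\mu_1^G=|\lam_D(G)-d|/d$ cancelling (using $\lam_D\in(0,d]$ from Corollary~\ref{eig}), produces the asserted asymptotic, read as the ratio of the two sides tending to the positive constant $\sqrt{S_G/S_H}$.

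The main subtlety to flag is the bipartite case: since $\ZZ^2$ is bipartite, every connected subgraph $G\subset\ZZ^2$ admits the additional eigenvalue $-\mu_1^G$ with a sign-alternating eigenvector, so $M_G$ typically contains more than one index. This is harmless in the $\ell^2$ norm because $(\mu_i^G)^{2k}$ erases the sign and the contributions at $\pm\mu_1^G$ add constructively; this is precisely why the proposition is phrased in terms of $\vv\cdot\vv_2$ rather than a signed quantity. I expect the only place this sign issue will actually bite is in the subsequent passage from Proposition~\ref{connectedProp2} to the $\ell^1$-type statement of Proposition~\ref{prob}, which involves $\la P_D^k e_i,1\ra$: there one must separately rule out cancellation between the $+\mu_1^G$ and $-\mu_1^G$ contributions, presumably by exploiting the positivity of $u_1^G$ more sharply. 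For the $\ell^2$ claim at hand, the spectral-theorem calculation above is the entire story.
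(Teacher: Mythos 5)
Your proof is correct and follows essentially the same route as the paper's: diagonalize $P_D$ by the spectral theorem, expand $v$ in the orthonormal eigenbasis, and read off the asymptotics of $\vv P_D^kv\vv_2^2=\sum_i c_i^2\mu_i^{2k}$ from the top-modulus eigenvalue $(d-\lam_D)/d$. Where you diverge is to your credit: the paper asserts outright that $\lam_n>|\lam_i|$ for all $i\neq n$, which is actually false in this setting, since $P_D=A/d$ and every connected subgraph of $\ZZ^2$ with an edge is bipartite, so $-\lam_n$ is also an eigenvalue of $P_D$. Your bookkeeping with the set $M_G$ of maximal-modulus indices repairs this, and your remark that the $\pm\mu_1$ contributions add constructively in the $\ell^2$ norm --- while potentially causing cancellation in the later $\ell^1$-type deduction of Proposition \ref{prob} --- is exactly the right caveat. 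You also make explicit a hypothesis the paper leaves implicit: the asymptotic requires $v$ to have nonzero overlap with the Perron eigenvector, which holds for the nonnegative, nonzero test vectors actually used downstream but not for arbitrary $v$.
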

\begin{proof}Let $\lam_1 \leq \lam_2 \leq \cdots \leq \lam_n$ be the eigenvalues of $P_D(G)$ with the associated orthonormal basis of eigenfunctions $u_1$, $\cdots$, $u_n$. Similarly, let $\mu_1 \leq \mu_2 \leq \cdots \leq \mu_n$ be the eigenvalues of $P_D(H)$ with the associated orthonormal basis of eigenfunctions $w_1$, $\cdots$, $w_n$. Then
\[\vv P_D(G)^kv\vv^2_2 = \lam_1^{2k}\la v,u_1\ra^2 + \cdots + \lam_n^{2k}\la v,u_n\ra^2\]
and
\[\vv P_D(H)^kv\vv_2^2 = \mu_1^{2k}\la v,w_1\ra^2 + \cdots + \mu_n^{2k}\la v,w_n\ra^2\]
From Perron-Frobenius we know that $u_n$ and $w_n$ are both either strictly positive or strictly negative. Thus $\la v,u_n\ra^2$ and $\la v,w_n\ra^2$ are both strictly positive. We have $\lam_n > |\lam_i|$ for $i \neq n$ and $\mu_n > |\mu_i|$ for $i \neq n$. Since $\lam_n = (-1/4)(\lam_D(G) - d)$ and $\mu_n = (-1/4)(\lam_D(H) - d)$ the proposition follows.
\end{proof}

$P_D$ is related to the discrete Markov process described above. Let us start the process with a particle at vertex $i$. Then the arguments used in Proposition \ref{transGraph} go through almost unchanged. The probability that a particle which starts at $i$ has not fallen off of $G$ after $k$ iterations is $\vv P_D^ke_i\vv_1$\footnote{$\vv v\vv_1 = \sum_{i=1}^n |v_i|$}. In other words, $\mathbb{P}\left(P_D^{(i)} \geq k\right) = \vv P_D^ke_i\vv_1$. Recall that all norms on $\RR^n$ are equivalent\footnote{For any two norms $\vv\cdot\vv$ and $\vv\cdot\vv_*$ on $\RR^n$ there exists $C > 0$ such that $C^{-1}\vv v\vv \leq \vv v\vv_* \leq C\vv v\vv$ for all $v \in \RR^n$}. Hence, for any $i$ and $j$, Proposition \ref{connectedProp2} implies that
\[\frac{\mathbb{P}\left(E_G^{(i)} \geq k\right)}{\mathbb{P}\left(E_H^{(j)}\geq k\right)} \sim \left(\frac{|\lam_D(G) - d|}{|\lam_D(H) - d|}\right)^{k}\]
This establishes Proposition \ref{prob}.

The next corollary is useful when trying to explicitly calculate eigenfunctions on graphs with some nontrivial automorphisms. Recall that a bijection $\chi:G \to G$ is an automorphism if
\[\chi(x) \sim_G \chi(y) \Leftrightarrow x \sim_G y\]
\begin{coro}\label{eigenAuto}Let $f$ be a principle eigenfunction for a connected graph $G$. If $\chi: G \to G$ is an automorphism of $G$, then $f(g) = f(\chi(g))$ for all $g \in G$.
\end{coro}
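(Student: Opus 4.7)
The plan is to show that the pulled-back function $\tilde f(g) := f(\chi(g))$ is itself a principal eigenfunction for $G$, and then use the one-dimensionality of the $\lambda_D$-eigenspace (Corollary \ref{eig}) to conclude that $\tilde f = f$.

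The key computation is to verify that $L_D \tilde f = \lambda_D \tilde f$. Using the formula $(L_D h)(x) = \deg_\Gamma(x) h(x) - \sum_{y \sim_G x} h(y)$ and the fact that $\Gamma$ is regular (so $\deg_\Gamma$ is a constant $d$, independent of the vertex),
\[
(L_D \tilde f)(g) = d\, f(\chi(g)) - \sum_{y \sim_G g} f(\chi(y)).
\]
Because $\chi$ is an automorphism, the map $y \mapsto \chi(y)$ is a bijection from $\{y : y \sim_G g\}$ onto $\{z : z \sim_G \chi(g)\}$, so the sum on the right equals $\sum_{z \sim_G \chi(g)} f(z)$. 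Hence
\[
(L_D \tilde f)(g) = d\, f(\chi(g)) - \sum_{z \sim_G \chi(g)} f(z) = (L_D f)(\chi(g)) = \lambda_D\, f(\chi(g)) = \lambda_D\, \tilde f(g).
\]

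Now I would invoke Corollary \ref{eig}: since $G$ is connected, the $\lambda_D$-eigenspace is one-dimensional and spanned by a strictly positive function. Since $f > 0$ everywhere and $\chi$ is a bijection on the vertex set, $\tilde f > 0$ as well, so $\tilde f = c f$ for some scalar $c > 0$. Finally, a change of variables gives
\[
\sum_g \tilde f(g)^2 = \sum_g f(\chi(g))^2 = \sum_g f(g)^2 = 1,
\]
again using that $\chi$ is a bijection of $G$. Combined with the normalization $\sum_g f(g)^2 = 1$ from the definition of a principal eigenfunction, this forces $c^2 = 1$ and hence $c = 1$, giving $f(g) = f(\chi(g))$ for every $g \in G$. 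There is no real obstacle here; the only point requiring a moment's care is to notice that the diagonal part of $L_D$ depends on $\deg_\Gamma$ rather than $\deg_G$, so it is the standing assumption that $\Gamma$ is regular (not any automorphism-invariance of $\deg_\Gamma$ on $G$) that makes the symmetry argument work.
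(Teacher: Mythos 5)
Your proposal is correct and follows essentially the same route as the paper: verify that $f\circ\chi$ is a $\lambda_D$-eigenfunction, invoke the one-dimensionality of the eigenspace from Perron--Frobenius to get $f\circ\chi = cf$, and pin down $c=1$ from the fact that $\chi$ permutes the values of $f$ (you phrase this via the $\ell^2$-normalization, the paper via the permutation of values directly). Your explicit verification of the eigenvalue equation, including the remark that regularity of $\Gamma$ is what makes the diagonal term transform correctly, simply fills in what the paper dismisses as following ``directly from the relevant definitions.''
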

\begin{proof} It follows directly from the relevant definitions that
\[L_D(f\circ \chi) = \lam_D(G)(f\circ \chi)\]
Theorem \ref{perronFrobenius} implies that $f\circ \chi$ is a multiple of $f$. Since the values of $f\circ \chi$ are a permutation of the values of $f$, we must have $f \circ \chi = f$.
\end{proof}
Before proceeding, we define the Rayleigh quotient of a non-zero function $f$, defined on $\overline{G}$ and vanishing on $\pa G$, by
\[R_G(f) = \frac{\sum\{(f(x)-f(y))^2: x\sim_{\overline{G}} y\}}{\sum_x f^2(x)}\]
To increase readability, we will drop the $\overline{G}$ when there is no ambiguity and write
\[R_G(f) = \frac{\sum_{x\sim y}(f(x)-f(y))^2}{\sum_x f^2(x)}\]
As with $\lam_1$, there is a variational characterization of $\lam_D$.
\begin{theo}\label{rayleighQuotientD}
\[\lam_D = \inf\Big\{R_G(f) : f \neq 0\text{ and }f|_{\pa G} = 0\Big\}\]
Also, $R_G$ achieves its minimum at $f$ if and only if $f$ is an eigenfunction.
\end{theo}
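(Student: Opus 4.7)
The plan is to deduce this variational characterization by combining two ingredients: a discrete integration-by-parts identity for $L_D$ (analogous to Lemma \ref{intByParts}), and the minimax principle from Theorem \ref{minimax} applied with $l=1$.

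First I would establish the identity
\[\la L_D f, f\ra = \sum_{x \sim_{\overline{G}} y}(f(x)-f(y))^2\]
for any function $f$ on $\overline{G}$ vanishing on $\pa G$. To verify this, I expand the left side using $(L_D f)(x) = \deg_\Gamma(x) f(x) - \sum_{y\sim_G x} f(y)$, obtaining
\[\la L_D f, f \ra = \sum_{x \in G}\deg_\Gamma(x) f(x)^2 \;-\; \sum_{x \in G}\sum_{y \sim_G x} f(x) f(y).\]
On the right-hand side, $\sum_{x\sim_{\overline{G}} y}[f(x)^2 + f(y)^2]$ counts each $f(x)^2$ with multiplicity $\deg_{\overline{G}}(x)$, and collapses to $\sum_{x \in G}\deg_\Gamma(x) f(x)^2$ using that $f$ vanishes on $\pa G$ and that $\deg_{\overline{G}}(x) = \deg_\Gamma(x)$ for $x \in G$ (since $\overline{G}$ contains all $\Gamma$-neighbors of $G$). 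The cross term $-2\sum_{x\sim y} f(x) f(y)$ similarly reduces to $-\sum_{x \in G}\sum_{y \sim_G x}f(x)f(y)$ after discarding the terms in which either endpoint lies in $\pa G$. Matching the two sides gives the identity.

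Second, since $L_D$ is real symmetric, the minimax principle of Theorem \ref{minimax} applies. Taking $l=1$, the orthogonality constraint is vacuous, so
\[\lam_D = \min_{f \neq 0}\frac{\la L_D f, f\ra}{\la f, f\ra},\]
with the minimum attained precisely on eigenfunctions corresponding to $\lam_D$. Identifying functions on $G$ with functions on $\overline{G}$ vanishing on $\pa G$, and substituting the integration-by-parts identity into the numerator, yields the claimed formula
\[\lam_D = \inf\{R_G(f) : f \neq 0,\ f|_{\pa G} = 0\}\]
together with the characterization of equality.

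The only nontrivial step is the integration-by-parts bookkeeping: one must repeatedly use $f|_{\pa G} = 0$ to pass between sums indexed by $\overline{G}$ and sums indexed by $G$, and between $\deg_\Gamma$ and $\deg_{\overline{G}}$ on vertices of $G$. Once that identity is in hand, the rest of the proof is a direct specialization of the minimax principle proved in the previous section.
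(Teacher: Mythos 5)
Your proposal is correct and is essentially the route the paper intends: the paper's proof is the one-line remark that the argument for Theorem \ref{minimax} ``goes through with slight changes,'' and those changes are exactly what you supply, namely the Dirichlet version of the integration-by-parts identity $\la L_D f, f\ra = \sum_{x\sim_{\overline{G}} y}(f(x)-f(y))^2$ (using $f|_{\pa G}=0$ and $\deg_{\overline{G}} = \deg_\Gamma$ on $G$) followed by the $l=1$ case of the minimax principle. Your bookkeeping for both the diagonal and cross terms checks out, so no gaps.
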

\begin{proof}The proof of Theorem \ref{minimax} goes through with slight changes.
\end{proof}

Now we present one more result which shows how closely related $\lam_D$ is to $\pa G$.
\begin{coro}\label{boundaryDer}Let $f$ be an eigenfunction of $G$. For $x \in G$ let $\pa(x)$ denote the number of vertices in $\pa G$ that are connected to $x$. Then
\[\sum_{x\in G}\pa(x)f(x) = \lam_D\vv f\vv_1\]
\end{coro}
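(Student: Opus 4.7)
The plan is to sum the eigenvalue equation $L_D f = \lam_D f$ over all vertices of $G$ and reorganize the result via a double-counting argument. Because we are dealing with a principal eigenfunction for a connected subgraph, Corollary \ref{eig} lets us take $f > 0$, so $\|f\|_1 = \sum_{x\in G} f(x)$; this identification is what makes the right-hand side of the claim meaningful.

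First, I would rewrite $\pa(x)$ in a form that meshes with the definition of $L_D$. For $x\in G$, every $\Ga$-neighbor of $x$ lies in $G \cup \pa G$ (that is precisely Definition \ref{boundExtInt}), and since $\Ga$ is $d$-regular we get the basic identity
\[\pa(x) = d - \deg_G(x).\]
Thus the sum we want to compute can be expanded as
\[\sum_{x \in G}\pa(x)f(x) = d\sum_{x\in G}f(x) - \sum_{x\in G}\deg_G(x)f(x).\]

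Next, I would sum the eigenvalue equation
\[(L_D f)(x) = df(x) - \sum_{y\sim_G x}f(y) = \lam_D f(x)\]
over $x \in G$. The cross term is handled by swapping the order of summation: every vertex $y \in G$ contributes $f(y)$ exactly $\deg_G(y)$ times to $\sum_{x\in G}\sum_{y\sim_G x} f(y)$, so
\[\sum_{x\in G}\sum_{y\sim_G x}f(y) = \sum_{y\in G}\deg_G(y)f(y).\]
Substituting back yields
\[\lam_D \sum_{x\in G}f(x) = d\sum_{x\in G}f(x) - \sum_{x\in G}\deg_G(x)f(x) = \sum_{x\in G}\pa(x)f(x),\]
and the claim follows once we invoke $\|f\|_1 = \sum_{x \in G} f(x)$.

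There is no real obstacle here; the only care needed is to justify dropping absolute values in $\|f\|_1$, which is exactly where positivity of the principal eigenfunction (Corollary \ref{eig}) is used, and to keep track of the distinction between $\deg_\Ga$ and $\deg_G$ in the two definitions of $L_D$ spelled out just before Corollary \ref{eig}.
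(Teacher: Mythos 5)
Your proof is correct, but it takes a different route from the paper. The paper derives the identity variationally: it computes the Euler--Lagrange equation of the Rayleigh quotient $R_G$ at its minimizer $f$, obtaining the discrete weak formulation $\sum_{x\sim y}(f(x)-f(y))(g(x)-g(y)) = \lam_D\sum_x f(x)g(x)$ for all test functions $g$ vanishing on $\pa G$, and then tests against $g\equiv 1$ on $G$; on the left only the edges crossing into $\pa G$ survive, which is exactly where $\pa(x)$ appears. You instead sum the pointwise eigenvalue equation $(L_Df)(x)=\lam_D f(x)$ over $x\in G$ and use the regularity of $\Gamma$ to write $\pa(x)=d-\deg_G(x)$, handling the cross term by double counting. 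The two arguments are two presentations of the same underlying fact $\la L_Df,\mathbf{1}\ra = \lam_D\la f,\mathbf{1}\ra$, but yours is more elementary (pure algebra, no differentiation of the Rayleigh quotient), while the paper's detour yields the full summation-by-parts identity for arbitrary $g$, which is of independent use. One point in your favor: you make explicit the hypothesis needed to write $\vv f\vv_1=\sum_x f(x)$, namely that $f$ is a positive principal eigenfunction of a connected $G$ (Corollary \ref{eig}); the paper's proof uses the same positivity silently when it asserts $f(x)+tg(x)>0$ for small $t$.
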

\begin{proof}
We will compute the Euler-Lagrange equation for $R_G$. Consider a function $g$ on $\overline{G}$ vanishing on $\pa G$. Note that for small enough $t$, $f(x) + tg(x) > 0$. Thus, using Theorem \ref{rayleighQuotientD},

\begin{align*}
\frac{d}{dt}R_G(f+tg)|_{t=0} &= 0\Leftrightarrow \\
\frac{d}{dt}\frac{\sum_{x\sim y}\Big[f(x)-f(y) + t\Big(g(x)-g(y)\Big)\Big]^2}{\sum_x \Big(f(x)+tg(x)\Big)^2}\Big|_{t=0} &= 0 \Leftrightarrow \\
2\Big[\sum_x f^2(x)\Big]\Big[\sum_{x\sim y}\Big(f(x)-f(y)\Big)\Big(g(x)-g(y)\Big)\Big] &= \\
2\Big[\sum_{x\sim y}\Big(f(x)-f(y)\Big)^2\Big]\Big[\sum_xf(x)g(x)\Big] & \Leftrightarrow\\
\sum_{x\sim y}\Big(f(x)-f(y)\Big)\Big(g(x)-g(y)\Big) &=\\
\frac{\Big[\sum_x f(x)g(x)\Big]\Big[\sum_{x\sim y}\Big(f(x)-f(y)\Big)^2\Big]}{\sum_x f^2(x)}&= \lam_D\sum_x f(x)g(x)\Leftrightarrow \\
\sum_{x\sim y}\Big(f(x)-f(y)\Big)\Big(g(x)-g(y)\Big) &= \lam_D\sum_x f(x)g(x)
\end{align*}
The corollary follows by setting $g$ to be identically $1$ on $G$.
\end{proof}
Note that this does not have a well known analogue for the continuous Laplacian.
In passing, we mention that there are other definitions of $\lam_D$ in common usage. However, in the case of regular graphs all of these agree up to a constant. Since we will be concerned with subgraphs of $\ZZ^2$, the choice of definition is not important for us.
\section{Basics of the Faber-Krahn Problem on $\ZZ^2$}
 The graph Faber-Krahn problem now takes the following general form: For a fixed (usually infinite) graph $\Gamma$ define
 \[\lam_D^{(n)} = \inf\{\lam_D(G) : \text{ G is a subgraph and }|G| = n\}\]
 Then one wants to
 \begin{enumerate}
    \item Understand the asymptotics of the sequence $\left\{\lam_D^{(n)}\right\}_{n=1}^{\infty}$.
    \item Find all graphs $G$ with $|G| = n$ such that $\lam_D(G) = \lam_D^{(n)}$ or obtain information about such $G$ asymptotically.
 \end{enumerate}
We refer to such subgraphs as ``minimizing subgraphs.''

At present, this problem for general $\Gamma$ appears quite intractable. Furthermore, there is no reason to expect a nice answer for general $\Gamma$. Hence, there have been attempts to solve the Faber-Krahn problem on certain well understood infinite graphs, such as trees. Some of the results indicate that the situation is more complicated than one might naively expect. For example, minimizers in a tree are usually \emph{not} geodesic balls even though they are very close. See the final chapter of \cite{n2} for a survey and a collection of references.

For the rest of this paper we will take $\Gamma = \ZZ^2$. Since it is the graph analogue of Euclidean space, we can appeal to geometric intuition from $\RR^2$. The continuous Faber-Krahn inequality and the probabilistic interpretation of $\lam_D$ suggest that minimizing graphs should be ``circular'' in some sense, at least for large enough graphs. Indeed, we will show that for large $n$ the minimizing subgraphs on $n$ vertices must be ``close'' to an $\RR^2$ disk. ``Close'' is to be interpreted as in Theorem \ref{mainTheo}.

Before diving into the proof of Theorem \ref{mainTheo}, we will warm up with a couple easy propositions.
\begin{prop}For every $n$ there exists a subgraph $G$ with $|G| = n$ and $\lam_D(G) = \lam_D^{(n)}$.
\end{prop}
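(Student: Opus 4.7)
The strategy is to reduce the infimum over an a priori infinite collection of subgraphs to an infimum over a finite set of real numbers, which is automatically achieved.

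First I would observe that $\lam_D(G)$ depends only on the isomorphism class of $G$ as an abstract graph, not on its specific embedding in $\ZZ^2$. This is because every vertex of $\ZZ^2$ has degree exactly $4$, so the matrix $L_D(G) = 4I - A(G)$ is completely determined once one fixes a numbering of the vertices of $G$ and records which pairs $i\sim_G j$ are adjacent. A different numbering merely conjugates $L_D$ by a permutation matrix, which does not affect eigenvalues, so two induced subgraphs of $\ZZ^2$ that are isomorphic as abstract graphs produce the same value of $\lam_D$.

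Next I would note that there are only finitely many abstract graphs on $n$ labeled vertices (at most $2^{\binom{n}{2}}$, since each unordered pair is either an edge or not). Therefore the set
\[
S_n \equiv \{\lam_D(G) : G \subset \ZZ^2 \text{ with } |G| = n\}
\]
is contained in the finite set of eigenvalues of matrices of the form $4I - A$ as $A$ ranges over the finitely many symmetric $\{0,1\}$-matrices of size $n\times n$ with zero diagonal. Hence $S_n$ is a finite subset of $\RR$.

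Since $\lam_D^{(n)} = \inf S_n$ and $S_n$ is a nonempty finite set (it contains, for instance, $\lam_D$ of any straight-line path on $n$ vertices in $\ZZ^2$), the infimum is actually a minimum and is realized by some subgraph $G \subset \ZZ^2$ with $|G| = n$. There is really no obstacle to overcome here; the only thing one must be careful about is the distinction between the graph-theoretic structure and the embedding, but this is precisely why the uniform degree $4$ of $\ZZ^2$ is used in the definition $L_D = 4I - A$.
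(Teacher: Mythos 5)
Your proposal is correct and is essentially the same argument as the paper's: $\lam_D$ depends only on the isomorphism class of the subgraph (because $\ZZ^2$ is $4$-regular, so $L_D = 4I - A$ is determined by the adjacency structure), and there are only finitely many isomorphism classes on $n$ vertices, so the infimum is over a finite nonempty set and is attained. The paper states this more tersely, but the content is identical.
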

\begin{proof}Since isomorphic subgraphs are easily seen to have the same $\lam_D$, we need only consider subgraphs up to isomorphism. However, there are clearly only finitely many isomorphism classes of subgraphs in $\ZZ^2$ with $n$ vertices. Hence, $\lam_D^{(n)}$ must be achieved by some subgraph.
\end{proof}
\begin{prop}\label{eigenDecrease} $\left\{\lam_D^{(n)}\right\}_{n=1}^{\infty}$ is a strictly decreasing sequence.
\end{prop}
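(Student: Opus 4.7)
The plan is to show that for every $n \geq 1$, $\lam_D^{(n+1)} < \lam_D^{(n)}$. Fix a minimizer $G$ of size $n$, i.e., $\lam_D(G) = \lam_D^{(n)}$. The goal is to exhibit a subgraph $G'$ with $|G'| = n+1$ and $\lam_D(G') < \lam_D(G)$, which by definition of $\lam_D^{(n+1)}$ forces the strict inequality. The construction is a discrete test-function argument: take a principal eigenfunction $f$ on a connected piece of $G$, extend it by a small positive value $t$ at a chosen boundary vertex $v \in \pa G$, and plug the extension into the Rayleigh quotient of Theorem \ref{rayleighQuotientD}. Positivity of $f$ from Corollary \ref{eig} is what produces strict decrease.

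In detail, assume first that $G$ is connected, and let $f$ be a principal eigenfunction normalized so that $\sum_x f(x)^2 = 1$; by Corollary \ref{eig}, $f > 0$ on $G$. Fix $v \in \pa G$ and set $N_G(v) = \{u \in G : u \sim_{\ZZ^2} v\}$, which is nonempty. Let $G' = G \cup \{v\}$ and define $\tilde f$ on $\overline{G'}$ by $\tilde f|_G = f$, $\tilde f(v) = t$, and $\tilde f \equiv 0$ on $\pa G'$. Comparing edges of $\overline{G'}$ to edges of $\overline{G}$: the edges joining $v$ to $N_G(v)$ cease to be boundary edges and become interior edges of $G'$, while the remaining $4 - |N_G(v)|$ edges from $v$ to its other $\ZZ^2$-neighbors become new boundary edges of $G'$. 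Collecting contributions and applying $R_G(f) = \lam_D(G)$ yields the clean formula
\begin{equation*}
R_{G'}(\tilde f) \;=\; \frac{\lam_D(G) - 2tS + 4t^2}{1 + t^2}, \qquad S \equiv \sum_{u \in N_G(v)} f(u).
\end{equation*}
Since $f > 0$ on $G$ and $N_G(v) \neq \emptyset$, one has $S > 0$, so the derivative of $R_{G'}(\tilde f)$ at $t = 0$ equals $-2S < 0$. Choosing $t > 0$ sufficiently small gives $R_{G'}(\tilde f) < \lam_D(G)$, and Theorem \ref{rayleighQuotientD} then yields $\lam_D(G') \leq R_{G'}(\tilde f) < \lam_D(G)$, completing the connected case.

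If instead the minimizer $G$ is disconnected, its Dirichlet Laplacian is block diagonal on the components, so $\lam_D(G) = \min_i \lam_D(G_i)$. Pick a component $G^*$ attaining the minimum; then $G^*$ is connected, $|G^*| < n$, and $\lam_D(G^*) = \lam_D^{(n)}$. Apply the connected-case construction iteratively to $G^*$: each enlargement adds one vertex of the current boundary and preserves connectedness, since the added vertex is $\ZZ^2$-adjacent to a vertex already in the graph. After $n + 1 - |G^*|$ steps this produces a connected induced subgraph $\tilde G$ with $|\tilde G| = n + 1$ and $\lam_D(\tilde G) < \lam_D(G^*) = \lam_D^{(n)}$, and hence $\lam_D^{(n+1)} < \lam_D^{(n)}$ in either case.

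The main technical obstacle is the Rayleigh-quotient bookkeeping: every edge of $\overline{G}$ must be correctly classified in $\overline{G'}$ as interior-to-interior, interior-to-boundary, or newly-appearing boundary, so that the cross terms in $t$ collapse exactly to $-2tS$ and the quadratic terms to exactly $4 t^2$ (the factor $4 = \deg_{\ZZ^2}(v)$ comes from the $k$ edges contributing $kt^2$ on the $G$-side plus the $4-k$ edges contributing $(4-k)t^2$ on the boundary side). The only other subtlety is the reduction to the connected case, which is forced on us because Perron-Frobenius positivity of $f$, and hence the crucial strict inequality $S > 0$, is only available for connected graphs.
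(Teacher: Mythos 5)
Your proof is correct, and it follows the same overall strategy as the paper (reduce to a connected graph, adjoin a boundary vertex, and invoke Perron--Frobenius positivity from Corollary \ref{eig} to get strictness), but both key steps are implemented differently. For strictness, the paper extends the principal eigenfunction by \emph{zero} at the new vertex, which only gives $\lam_D(G_{n+1}) \leq \lam_D(G_n)$, and then rules out equality by the rigidity clause of Theorem \ref{rayleighQuotientD}: equality would force the zero-extension to be an eigenfunction of the connected graph $G_{n+1}$, contradicting positivity. You instead extend by a small $t>0$ and compute the first-order perturbation $R_{G'}(\tilde f) = \big(\lam_D(G) - 2tS + 4t^2\big)/(1+t^2)$ explicitly; your edge bookkeeping checks out, and the derivative $-2S<0$ gives strict decrease directly. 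Your version is more quantitative (it even yields an explicit lower bound on the drop in terms of the boundary values of the eigenfunction, in the spirit of Corollary \ref{boundaryDer}), at the cost of the bookkeeping; the paper's is shorter but leans on the equality case of the variational characterization. For the disconnected case, the paper translates components until the graph is connected (which requires a separate lemma that translation of a component does not increase $\lam_D$), whereas you use block-diagonality to restrict to a minimizing component and then grow it back up one vertex at a time; your route is arguably cleaner and avoids the translation lemma entirely. Both arguments are sound.
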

\begin{proof}Fix $n$ and choose a subgraph $G_n$ with $\lam_D(G_n) = \lam_D^{(n)}$. Let $l: G_n \to \ZZ^2$ be a map whose restriction to one of the components of $G_n$ is one of
\begin{enumerate}
\item $(x,y) \mapsto (x+1,y)$
\item $(x,y) \mapsto (x-1,y)$
\item $(x,y) \mapsto (x,y+1)$
\item $(x,y) \mapsto (x,y-1)$
\end{enumerate}
and whose restriction to the other components is the identity. Set $G_n' = l(G_n)$. We claim that $\lam_D(G_n') \leq \lam_D(G_n)$. To see this, let $f$ be a principle eigenfunction for $G_n$. Then define $f': G_n' \to \RR$ by $f' = f\circ l^{-1}$. It is easy to see that $x \sim_{G_n} y$ implies that $l(x) \sim_{G'_n} l(y)$. Furthermore, note that if $l(x) \sim_{G'_n} l(y)$ and $x \not\sim_{G_n} y$, then $x$ and $y$ must both lie in $\pa G_n$. These observations, along with Theorem \ref{rayleighQuotientD}, give the following string of inequalities
\begin{align*}
\lam_D(G'_n) &\leq R_{G'_n}(f')\\
 &= \sum\{(f'(x)-f'(y))^2: x\sim_{\overline{G'_n}} y\}\\
 &= \sum\{(f(l^{-1}(x)) - f(l^{-1}(y)))^2:x \sim_{\overline{G_n'}} y\text{ and }l^{-1}(x)\sim_{\overline{G_n}} l^{-1}(y)\}\\
 &+ \sum\{(f(l^{-1}(x)) - f(l^{-1}(y)))^2:x \sim_{\overline{G_n'}} y\text{ and }l^{-1}(x) \not\sim_{\overline{G_n}} l^{-1}(y)\}\\
 &\leq \sum\{(f(l^{-1}(x)) - f(l^{-1}(y)))^2:x \sim_{\overline{G_n'}} y\text{ and }l^{-1}(x)\sim_{\overline{G_n}} l^{-1}(y)\}\\
 &+ \sum\{f^2(l^{-1}(x)) + f^2(l^{-1}(y)):x \sim_{\overline{G_n'}} y\text{ and }l^{-1}(x) \not\sim_{\overline{G_n}} l^{-1}(y)\}\\
 &= \sum\{(f(x)-f(y))^2:x \sim_{\overline{G_n}} y\}\\
 &= \lam_D(G_n)
\end{align*}
Since $G_n$ was a minimizing subgraph, we in fact have $\lam_D(G'_n) = \lam_D(G_n)$. Hence, we can freely translate path components of $G_n$ without changing $\lam_D(G_n)$. Since we can translate the various path components of $G_n$ until they are all connected, without loss of generality we will suppose that $G_n$ is connected.

Now choose an arbitrary boundary point $\overline{x}$ of $G_n$, and set $G_{n+1} = G_n \cup \{\overline{x}\}$. We claim that $\lam_D(G_{n+1}) < \lam_D(G_n)$.

To see this, let $f$ be a principle eigenfunction of $G_n$. Let $f'$ be the extension of $f$ to $G_{n+1}$ obtained by setting $f'(\overline{x}) = 0$. Then
\begin{align*}
\lam_D(G_{n+1}) &\leq \sum\{(f'(x) - f'(y))^2 : x \sim_{\overline{G_{n+1}}} y\}\\
&= \sum\{(f(x) - f(y))^2 : x \sim_{\overline{G_n}} y\}\\
&= \lam_D(G_n)
\end{align*}
This establishes $\lam_D(G_{n+1}) \leq \lam_D(G_n)$. Now suppose that $\lam_D(G_{n+1}) = \lam_D(G_n)$. Then $R_{G_{n+1}}$ achieves its minimum at $f'$. Hence, Theorem \ref{rayleighQuotientD} implies that $f'$ is an eigenfunction of $G_{n+1}$. However, since $G_{n+1}$ is connected, Corollary \ref{eig} implies that $f'(\overline{x}) > 0$. This is a contradiction, and we conclude that $\lam_D(G_{n+1}) < \lam_D(G_n)$. Now we are done since
\[\lam_D^{(n+1)} \leq \lam_D(G_{n+1}) < \lam_D(G_n) = \lam_D^{(n)}\]
\end{proof}
\begin{prop} \label{eigenConverge}$\lam_D^{(n)} \to 0$ as $n\to\infty$
\end{prop}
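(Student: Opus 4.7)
The plan is to exploit monotonicity from Proposition \ref{eigenDecrease}: since $\{\lam_D^{(n)}\}$ is strictly decreasing, it suffices to exhibit \emph{any} sequence of subgraphs $\{G_{n_k}\}$ with $|G_{n_k}| \to \infty$ and $\lam_D(G_{n_k}) \to 0$. Large solid square boxes are the natural candidates, as they are the discrete analogue of the continuous rectangles $(0,L)^2 \subset \RR^2$ on which the Dirichlet eigenvalue of $-\Delta$ is known to scale like $L^{-2}$.

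Concretely, I would set $S_k \equiv \{1,2,\ldots,k\}^2 \subset \ZZ^2$, so that $|S_k| = k^2$ and $\pa S_k$ consists of the adjacent rows/columns with coordinates $0$ or $k+1$. By Theorem \ref{rayleighQuotientD}, it is enough to produce a single test function vanishing on $\pa S_k$ whose Rayleigh quotient tends to $0$. The obvious choice, borrowed from separation of variables for the continuous problem on a square, is the product
\[
f(i,j) \equiv \sin\!\left(\frac{\pi i}{k+1}\right)\sin\!\left(\frac{\pi j}{k+1}\right), \qquad (i,j) \in \overline{S_k}.
\]
This vanishes on $\pa S_k$ by construction.

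To evaluate $R_{S_k}(f)$, I would split the edge sum into horizontal and vertical contributions and apply the elementary identity $\sin((i+1)\theta)-\sin(i\theta) = 2\cos((i+\tfrac12)\theta)\sin(\tfrac{\theta}{2})$ with $\theta = \pi/(k+1)$. The denominator factors as $\bigl(\sum_{i=1}^k \sin^2(\pi i/(k+1))\bigr)^2$, and after collecting terms one finds
\[
R_{S_k}(f) \;=\; 4 - 4\cos\!\left(\frac{\pi}{k+1}\right) \;=\; O\!\left(\frac{1}{k^2}\right).
\]
(In fact $f$ is an eigenfunction and the above is exactly $\lam_D(S_k)$, but only the upper bound is needed.) Hence $\lam_D^{(k^2)} \leq \lam_D(S_k) \to 0$ as $k\to\infty$, and Proposition \ref{eigenDecrease} upgrades this along-a-subsequence limit to $\lam_D^{(n)} \to 0$.

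There is no real obstacle here: the only substantive step is the trigonometric bookkeeping in the Rayleigh quotient, which is routine. The conceptual content is simply that the naive "large box" discretization already achieves the continuous $1/L^2$ scaling, which is more than enough to drive $\lam_D^{(n)}$ to zero once monotonicity is in hand.
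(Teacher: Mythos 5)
Your proof is correct, and its skeleton is identical to the paper's: restrict to the square subgraphs $S_k$ on $k^2$ vertices, bound $\lam_D(S_k)$ by the Rayleigh quotient of an explicit test function via Theorem \ref{rayleighQuotientD}, and then invoke the monotonicity of Proposition \ref{eigenDecrease} to pass from the subsequence $n = k^2$ to all $n$. The only divergence is the choice of test function, and here the paper is strikingly more economical: it simply takes $g \equiv 1$ on $S_k$ (vanishing on $\pa S_k$), so the numerator of $R_{S_k}(g)$ is just the number of edges crossing from $S_k$ to its boundary, namely $4k$, and $R_{S_k}(g) = 4k/k^2 = 4/k \to 0$ with no computation whatsoever. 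Your sine-product function is the genuine principal eigenfunction, and your identity $\lam_D(S_k) = 4 - 4\cos(\pi/(k+1)) = O(k^{-2})$ is correct and recovers the sharp continuous scaling $L^{-2}$; but that sharpness buys nothing for this proposition, since any bound tending to $0$ suffices, and it costs you the trigonometric bookkeeping. (The sharp rate would matter if one wanted quantitative asymptotics for $\lam_D^{(n)}$, which the paper instead obtains later via Theorem \ref{approxTheo}.)
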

\begin{proof}For $n = k^2$, let $S_n$ denote the square subgraph on $n$ vertices. Define $g: \overline{S_n} \to \RR$ to be identically $1$ on $S_n$. Then Theorem \ref{rayleighQuotientD} implies that
\[\lam_D(S_n) \leq R_{S_n}(g) = 4k/n = 4/k \to 0 \text{ as }k \to \infty\]
Then Proposition \ref{eigenDecrease} finishes the proof.
\end{proof}
\begin{prop}Let $G_n$ be any minimizing subgraph on $n$ vertices. Then $G_n$ is connected.
\end{prop}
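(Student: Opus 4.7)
The plan is to show that if $G_n$ is disconnected, then $L_D(G_n)$ decomposes as a direct sum over the connected components, which forces $\lam_D(G_n)$ to equal the first Dirichlet eigenvalue of some proper component, contradicting minimality via Proposition \ref{eigenDecrease}.

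In detail, suppose for contradiction that $G_n$ is disconnected, with connected components $C_1,\ldots,C_k$ where $k\geq 2$. From the intrinsic description
\[(L_Df)(x) = \deg_{\ZZ^2}(x)f(x) - \sum_{y\sim_{G_n} x}f(y) = 4f(x) - \sum_{y\sim_{G_n} x}f(y),\]
I observe that the sum depends only on the $G_n$-neighbors of $x$, which all lie in the same component as $x$. Hence if I number the vertices of $G_n$ by listing all vertices of $C_1$, then all vertices of $C_2$, and so on, the matrix for $L_D(G_n)$ is block diagonal with blocks $L_D(C_1),\ldots,L_D(C_k)$. Therefore the spectrum of $L_D(G_n)$ is the union of the spectra of the $L_D(C_i)$, so
\[\lam_D(G_n) = \min_{1\leq i\leq k}\lam_D(C_i).\]

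Pick an index $j$ achieving this minimum, so $\lam_D(G_n) = \lam_D(C_j)$. Since $k\geq 2$, each component has strictly fewer than $n$ vertices, so $|C_j| < n$. By the definition of $\lam_D^{(|C_j|)}$ we have $\lam_D^{(|C_j|)} \leq \lam_D(C_j)$, and by Proposition \ref{eigenDecrease} the sequence $\{\lam_D^{(m)}\}$ is strictly decreasing, so $\lam_D^{(n)} < \lam_D^{(|C_j|)}$. Stringing these together,
\[\lam_D^{(n)} < \lam_D^{(|C_j|)} \leq \lam_D(C_j) = \lam_D(G_n) = \lam_D^{(n)},\]
which is absurd. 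Therefore $G_n$ must be connected.

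The only nontrivial step here is the block-diagonal decomposition, but this is immediate from the local formula for $L_D$ combined with the fact that the boundary vertices of $G_n$ are encoded purely through the ambient degree $\deg_{\ZZ^2}(x)=4$ rather than any inter-component adjacency. I do not need any subgraphs with special structure; the argument is purely spectral and relies on the strict monotonicity already established for $\{\lam_D^{(n)}\}$.
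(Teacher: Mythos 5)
Your proof is correct and follows essentially the same route as the paper: both arguments rest on the observation that $L_D$ decomposes over connected components (the paper phrases this by restricting a principal eigenfunction to a component where it is nonvanishing, you phrase it as a block-diagonal matrix decomposition), and both then conclude via the strict monotonicity of $\{\lam_D^{(n)}\}$ from Proposition \ref{eigenDecrease}.
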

\begin{proof} Let $f$ be a principle eigenfunction for $G_n$. Since $f$ is nonzero, we can find some connected component of $G_n$ where $f$ is nonzero. Let this component be $V$. Note that $L_D(V)(f) = \lam_D(G_n)f$, i.e. $\lam_D(G_n)$ is an eigenfunction of $V$. Set $k = |V|$. Then $\lam_D^{(k)} \leq \lam_D(V) \leq \lam_D(G_n) = \lam_D^{(n)}$. Now Proposition \ref{eigenDecrease} implies that $k=n$, i.e. $G_n$ must be connected.
\end{proof}

Theorem \ref{mainTheo} only constrains the geometry of large minimizing graphs.  We will now present two examples of subgraphs which illustrate some of the complications involved in attempting to remove the asymptotic nature of Theorem \ref{mainTheo}.

Let $G$ denote
\begin{center}
\begin{tikzpicture}
[interior/.style={circle,draw=black,fill=black, inner sep=0pt,minimum size = 2.5mm},
 boundary/.style={circle,draw=black,fill=black!60, inner sep=0pt,minimum size = 2.5mm},
 exterior/.style={circle,draw=black,fill=white, inner sep=0pt,minimum size = 2.5mm}]
\draw[step=.5cm] (-1.49,-1.49) grid (1.49,1.49);
\draw (0,0) node [interior]{};
\draw (0,.5) node [interior]{};
\draw (0,-.5) node [interior]{};
\draw (.5,0) node [interior]{};
\draw (-.5,0) node [interior]{};
\end{tikzpicture}
\end{center}
and $H$ denote
\begin{center}
\begin{tikzpicture}
[interior/.style={circle,draw=black,fill=black, inner sep=0pt,minimum size = 2.5mm},
 boundary/.style={circle,draw=black,fill=black!60, inner sep=0pt,minimum size = 2.5mm},
 exterior/.style={circle,draw=black,fill=white, inner sep=0pt,minimum size = 2.5mm}]
\draw[step=.5cm] (-1.49,-1.49) grid (1.49,1.49);
\draw (-.5,-.5) node [interior]{};
\draw (-.5,0) node [interior]{};
\draw (0,-.5) node [interior]{};
\draw (0,0) node [interior]{};
\draw (0,.5) node [interior]{};
\end{tikzpicture}
\end{center}
The subgraph $G$ appears more symmetric than $H$. Hence, by analogy with the continuous Faber-Krahn inequality, it is natural to conjecture that $\lam_D(G) \leq \lam_D(H)$. However, we will show that $\lam_D(H) < \lam_D(G)$.
Let $f$ be a principle eigenfunction for $G$. Set $x$ to be the the middle point and $y_1$, $y_2$, $y_3$, and $y_4$ to be the other points. Rotating by 90 degrees gives an automorphism of $G$. Hence, Corollary \ref{eigenAuto} implies that $f(y_1) = f(y_2) = f(y_3) = f(y_4)$. Set $y = y_1$. The eigenvalue equations for $f$ reduce to
\[(4-\lam_D(G))f(x) = 4f(y)\]
\[(4-\lam_D(G))f(y) = f(x)\]
Plugging the second equation into the first gives
\[(4-\lam_D(G))^2f(y) = 4f(y)\]
Canceling $f(y)$ and simplifying gives
\[\lam_D(G)^2 - 8\lam_D(G) + 12 = 0\]
The roots of this are $6$ and $2$. Hence $\lam_D(G) = 2$.

Let $V$ be the square subgraph
\begin{center}
\begin{tikzpicture}
[interior/.style={circle,draw=black,fill=black, inner sep=0pt,minimum size = 2.5mm},
 boundary/.style={circle,draw=black,fill=black!60, inner sep=0pt,minimum size = 2.5mm},
 exterior/.style={circle,draw=black,fill=white, inner sep=0pt,minimum size = 2.5mm}]
\draw[step=.5cm] (-1.49,-1.49) grid (.99,.99);
\draw (-.5,-.5) node [interior]{};
\draw (-.5,0) node [interior]{};
\draw (0,-.5) node [interior]{};
\draw (0,0) node [interior]{};
\end{tikzpicture}
\end{center}
Set $g$ to be a principle eigenfunction for $V$. Corollary \ref{eigenAuto} implies that $g$ is constant. The relevant equation is
\[(4-\lam_D(V))g = 2g \Rightarrow \lam_D(V) = 2\]
In the proof of Proposition \ref{eigenDecrease}, it was shown that adjoining any boundary point to a connected subgraph will strictly lower $\lam_D$. Hence $\lam_D(H) < \lam_D(V) = 2 = \lam_D(G)$. Thus we see that $\lam_D$ should \emph{not} be thought of as purely measuring the ``symmetry'' or number of automorphisms of a domain.

Now we will start the proof of Theorem \ref{mainTheo}.
\section{Symmetrization}
As noted in the introduction, Steiner symmetrization has proved to be a useful tool when studying Laplacian eigenvalue problems. Suppose $(\Omega,f)$ is a pair consisting of a smooth open domain $\Omega \subset \RR^d$ and a non-negative smooth function $f: \Omega \to \RR$. Then Steiner Symmetrization associates $(\Omega,f)$ to another pair $(\Omega^{\star},f^{\star})$ such that $\Omega^{\star}$ is symmetric with respect to some given hyperplane. The three most important properties which make this useful for eigenvalue problems are
\begin{enumerate}
    \item $\vol(\Omega^{\star}) = \vol(\Omega)$
    \item For any measurable function $\varphi:\RR \to \RR$, e.g. $\varphi = |\cdot|^2$, we have $\int_{\Omega}\varphi(f(x))\ dx = \int_{\Omega^{\star}} \varphi(f^{\star}(x))\ dx$
    \item $\int_{\Omega} |\nabla f(x)|^2\ dx \geq \int_{\Omega^{\star}} |\nabla f^{\star}(x)|^2\ dx$
\end{enumerate}
By taking $f$ to be the relevant eigenfunction and using the Rayleigh quotient as in the proof of the Faber-Krahn inequality, these properties are exactly what is needed to conclude that $\lam(\Omega^{\star}) \leq \lam(\Omega)$. See \cite{n1} for more details.

Now let us consider connected subgraphs of $\ZZ^2$. If we are given a connected subgraph $G$ and a positive function $f$ on $G$, we would like to associate $(G,f)$ to another pair $(G^{\star},f^{\star})$ so that $G^{\star}$ has gained some symmetry, and the analogue of the above properties hold:
\begin{enumerate}
    \item $|G| = |G^{\star}|$
    \item For any function $\varphi: \RR \to \RR$ we have $\sum_{x \in G}\varphi(f(x)) = \sum_{x \in G^{\star}}\varphi(f^{\star}(x))$, i.e. the values of $f^{\star}$ on $G^{\star}$ are a permutation of the values of $f$ on $G$
    \item $\sum\{(f(x)-f(y))^2 : x \sim_G y\} \geq \sum\{(f^{\star}(x)-f^{\star}(y))^2: x \sim_{G^{\star}} y\}$
\end{enumerate}
Given this, the same proof as in the continuous case implies that $\lam_D(G^{\star}) \leq \lam_D(G)$.
\subsection{Horizontal and Vertical Symmetrization}

This paper will employ two different types of discrete symmetrizations. The first will produce graphs ``almost'' symmetric to the $x$ or $y$ axis. The second will produce graphs ``almost'' symmetric to the lines $y = x$ or $y = -x$.

We will start with symmetrization with respect to the $y$ axis, i.e. ``Horizontal Symmetrization''.
\begin{defi}
Let $G$ be a connected subgraph. For any $h \in \ZZ$, the $h$th horizontal slice of $G$ is
\[G^h \equiv \{(x,h) \in G: x \in \ZZ\}\]
\end{defi}
In the subgraph below we have highlighted the $0$th horizontal slice.
\begin{center}
\begin{tikzpicture}
[interior/.style={circle,draw=black,fill=black, inner sep=0pt,minimum size = 2.5mm},
 boundary/.style={circle,draw=black,fill=black!60, inner sep=0pt,minimum size = 2.5mm},
 exterior/.style={circle,draw=black,fill=white, inner sep=0pt,minimum size = 2.5mm},
 highlight/.style = {circle,draw= red, fill = red, inner sep=0pt, minimum size = 2.5mm}]
\draw[step=.5cm] (-2.99,-2.99) grid (2.99,2.99);
\draw (0,3) node {y};
\draw (3,0) node {x};
\draw (0,0) node [highlight]{};
\draw (0,-.5) node [interior]{};
\draw (0,.5) node [interior]{};
\draw (0,1) node [interior]{};
\draw (0,1.5) node [interior]{};
\draw (.5,1) node [interior]{};
\draw (-.5,1) node [interior]{};
\draw (-.5,-.5) node [interior]{};
\draw (-1,0) node [highlight]{};
\draw (-1.5,0) node [highlight]{};
\draw (-1.5,.5) node [interior]{};
\draw (-1,-.5) node [interior]{};
\end{tikzpicture}
\end{center}
\begin{defi}
An $h$ horizontal path of length $k$ is any subgraph with vertices $(x_0,h)$, $(x_0+1,h)$, $\cdots$, $(x_0+k-1,h)$ for some $x_0$.
\end{defi}
\begin{defi}Let $\chi: \ZZ^2 \to \ZZ^2$ be the automorphism of $\ZZ^2$ defined by $(x,y) \mapsto (-x,y)$. We say a subgraph $L$ is a ``positively centered $h$ horizontal path of length $k$'' if it is an $h$ horizontal line of length $k$ such that either $\chi(L) = L$ or $\chi(L - \{(x_0+k-1,h)\}) = L-\{(x_0+k-1,h)\}$. More informally, up to an extra vertex on the right side, $L$ is symmetric with respect to the $y$ axis.
\end{defi}
To construct $G^{\star}$ from $G$, we take each $G^h$ and replace it with a positively centered $h$ horizontal path of length $|G^h|$. The union of these new slices is $G^{\star}$. We call $G^{\star}$ the ``positive horizontal symmetrization of $G$''
Here we show the positive horizontal symmetrization of the subgraph above.
\begin{center}
\begin{tikzpicture}
[interior/.style={circle,draw=black,fill=black, inner sep=0pt,minimum size = 2.5mm},
 boundary/.style={circle,draw=black,fill=black!60, inner sep=0pt,minimum size = 2.5mm},
 exterior/.style={circle,draw=black,fill=white, inner sep=0pt,minimum size = 2.5mm},
 highlight/.style = {circle,draw= red, fill = red, inner sep=0pt, minimum size = 2.5mm}]
\draw[step=.5cm] (-2.99,-2.99) grid (2.99,2.99);
\draw (0,3) node {y};
\draw (3,0) node {x};
\draw (0,-.5) node [interior]{};
\draw (.5,-.5) node [interior]{};
\draw (-.5,0) node [interior]{};
\draw (0,0) node [interior]{};
\draw (.5,0) node [interior]{};
\draw (0,.5) node [interior]{};
\draw (.5,.5) node [interior]{};
\draw (-.5,1) node [interior]{};
\draw (0,1) node [interior]{};
\draw (.5,1) node [interior]{};
\draw (0,1.5) node [interior]{};
\draw (-.5,-.5) node [interior]{};
\end{tikzpicture}
\end{center}
 To any function $f: G \to \RR$ we will associate a function $f^{\star}:G^{\star} \to \RR$ in the following fashion: Fix $h \in \ZZ$. Let $r_1 \geq r_2 \geq \cdots \geq r_{|G^h|}$ be a listing of $\{f(x,h)\}$. Set $f^{\star}(0,h) = r_1$, $f^{\star}(1,h) = r_2$, $f^{\star}(-1,h) = r_3$, $f^{\star}(2,h) = r_4$, $f^{\star}(-2,h) = r_5$, etc. Repeating this process over all horizontal slices defines $f^{\star}$.

It is intuitively plausible that $G^{\star}$ is more ``securely connected'' than $G$. Indeed we have
\begin{theo}\label{symmTheo}
\[\lam_D(G^{\star}) \leq \lam_D(G)\]
\end{theo}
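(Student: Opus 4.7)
The plan is to invoke the variational principle in Theorem \ref{rayleighQuotientD}. Let $f$ be a principal eigenfunction of $G$, strictly positive on $G$ by Corollary \ref{eig}, and extend it by zero on $\pa G$. I define $f^{\star}$ on $G^{\star}$ (with $f^{\star}\equiv 0$ on $\pa G^{\star}$) by applying within each horizontal slice exactly the rearrangement that produces the positively centered path. The values of $f^{\star}$ on $G^{\star}$ are a permutation of those of $f$ on $G$, so the denominator $\sum_x f^2(x)$ of the Rayleigh quotient is unchanged, and the theorem reduces to proving the Dirichlet energy inequality
\[
\sum_{x\sim_{\overline{G^{\star}}} y}\bigl(f^{\star}(x)-f^{\star}(y)\bigr)^2 \;\leq\; \sum_{x\sim_{\overline G} y}\bigl(f(x)-f(y)\bigr)^2.
\]
Extending both functions by $0$ to all of $\ZZ^2$, I rewrite each side as a sum over every horizontal and every vertical edge of $\ZZ^2$ (edges outside $\overline G$ or $\overline{G^{\star}}$ contribute $0$), and then check the inequality row-by-row horizontally and between each pair of adjacent rows vertically.

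The horizontal step is a purely one-dimensional rearrangement statement: given $k$ nonnegative values placed at arbitrary positions in $\ZZ$ and zero elsewhere, moving them into the positively centered contiguous block of length $k$ does not increase $\sum_{j\in\ZZ}(g(j)-g(j+1))^2$. I would break this into two substeps. First, \emph{gap removal}: replacing a local pattern $(\ldots,a,0,\ldots,0,b,\ldots)$ with $(\ldots,a,b,\ldots)$ changes the affected contribution from $a^2+b^2$ to $(a-b)^2$, which is a decrease by $2ab\geq 0$; iterating closes every gap in the slice. Second, \emph{centering}: on a contiguous block of length $k$ with vanishing boundary, the identity
\[
\sum_{i=0}^{k}(s_i-s_{i+1})^2 = 2\sum_{i=1}^{k}s_i^2 - 2\sum_{i=1}^{k-1}s_is_{i+1}\qquad(s_0=s_{k+1}=0)
\]
reduces the horizontal inequality to showing that the zigzag ordering (largest value at the center, second largest to its right, third to its left, and so on) maximizes $\sum s_is_{i+1}$ over all permutations of the given multiset. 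I would prove this by an adjacent-swap induction: any arrangement with an interior local minimum admits a swap of two adjacent entries that strictly increases $\sum s_is_{i+1}$, so the optimum is achieved on unimodal rearrangements, and among those the positively centered zigzag is optimal.

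For the vertical step, between slices at heights $h$ and $h+1$, I expand
\[
\sum_{j\in\ZZ}\bigl(\tilde f(j,h)-\tilde f(j,h+1)\bigr)^2 = \sum_j \tilde f(j,h)^2 + \sum_j \tilde f(j,h+1)^2 - 2\sum_j \tilde f(j,h)\,\tilde f(j,h+1).
\]
The first two sums depend only on the multiset of values in each slice and are therefore unaffected by symmetrization, so the vertical inequality reduces to showing that the positively centered rearrangement \emph{maximizes} the cross-correlation $\sum_j \tilde f(j,h)\tilde f(j,h+1)$. Because both symmetrized slices are contiguous and centered about $j=0$, the smaller slice's positions are nested inside the larger's, and since both slices use the same zigzag ordering, the nonzero products at coincident columns pair the $i$-th largest value of slice $h$ with the $i$-th largest of slice $h+1$, for $i=1,\ldots,\min(n_h,n_{h+1})$. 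By the classical rearrangement inequality this is the maximum over all full bijections between the top $\min(n_h,n_{h+1})$ values of each slice, and since extending any partial pairing between the two multisets to a full bijection can only add nonnegative terms, the sorted pairing dominates the partial pairing realized by $G$.

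Combining the horizontal and vertical pieces yields the Dirichlet energy inequality, whence $\lam_D(G^{\star})\leq R_{G^{\star}}(f^{\star})\leq R_G(f)=\lam_D(G)$. The main technical obstacle is the centering substep in the horizontal argument: gap removal and the vertical step both follow from elementary manipulations and the rearrangement inequality, but verifying rigorously that the positively centered zigzag maximizes $\sum s_is_{i+1}$ against every permutation, with the vanishing boundary condition $s_0=s_{k+1}=0$ interacting nontrivially with swaps at the two endpoints, will require a careful pairwise-swap analysis.
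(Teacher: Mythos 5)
Your proposal follows essentially the same route as the paper: reduce to the Dirichlet-energy inequality via the Rayleigh quotient, split the energy into per-row horizontal terms and adjacent-row vertical terms, and establish each by a one-dimensional rearrangement lemma (zigzag/centered arrangement minimizes the horizontal energy; sorted pairing minimizes the vertical energy). The only difference is in how the combinatorial lemmas are verified --- you reformulate them as maximizing $\sum s_is_{i+1}$ and the cross-correlation and appeal to swap arguments and the classical rearrangement inequality, whereas the paper proves both directly by induction on the largest element (Lemmas \ref{horizontalLemm} and \ref{verticalLemm}); your acknowledged remaining step, that the positively centered zigzag maximizes $\sum s_is_{i+1}$, is true and is exactly the content of the paper's Lemma \ref{horizontalLemm}.
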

To prove the theorem it is sufficient to show that $R_{G^{\star}}(f^{\star}) \leq R_G(f)$ for any function on $\overline{G}$ that is positive on $G$ and vanishes on $\pa G$. To establish this, we will break up the Rayleigh quotient into many pieces and show the inequality on each piece. First, we note that the values of $f^{\star}$ are permutations of the values of $f$. Thus, we automatically have that $\sum_g f^{\star}(g)^2 = \sum_g f(g)^2$. Therefore, there is no harm in assuming that $\sum_g f(g)^2 = \sum_g f^{\star}(g)^2 = 1$.
\begin{defi}For any $k \in \ZZ$, we define the $k$th horizontal Rayleigh quotient for a function $f$ on $\overline{G}$, by first extending $f$ to be $0$ anywhere it is not defined, and then setting
\[H_k(f) \equiv \sum_{j = -\infty}^{\infty}(f(j+1,k)-f(j,k))^2\]
\end{defi}
\begin{defi}For any $k \in \ZZ$, we define the $k$th vertical Rayleigh quotient for a function $f$ on $\overline{G}$, by first extending $f$ to be $0$ anywhere it is not defined, and then setting
\[V_k(f) \equiv \sum_{j=-\infty}^{\infty}(f(j,k+1)-f(j,k))^2\]
\end{defi}
After extending $f$ to be $0$ everywhere it is not defined, we have
\begin{align*}
R_G(f) &= \sum_{x,y = -\infty}^{\infty}(f(x+1,y) - f(x,y))^2 + (f(x,y+1)-f(x,y))^2\\
       &= \sum_{y =-\infty}^{\infty}\sum_{x = -\infty}^{\infty} (f(x+1,y)-f(x,y))^2\\
       &\ + \sum_{y = -\infty}^{\infty}\sum_{x = -\infty}^{\infty}(f(x,y+1)-f(x,y))^2\\
       &= \sum_{y=-\infty}^{\infty}H_y(f) + \sum_{y=-\infty}^{\infty}V_y(f)
\end{align*}
Hence, Theorem \ref{symmTheo} will follow if we show that $H_k(f^{\star}) \leq H_k(f)$ and $V_k(f^{\star}) \leq V_k(f)$ for all $k$.
To prove this, we need a couple of combinatorial lemmas.

First we establish some notation. Let $x_1 \leq x_2 \leq \cdots \leq x_n$ be a collection of non-negative real numbers. Let $e_0$ and $e_{n+1}$ be two non-negative real numbers with $e_0,e_{n+1} \leq x_i$ for all $i$. The reader should keep in mind the case where $e_0 = e_{n=1} = 0$ and the $x_i's$ are the values of $f$ along a horizontal slice. Let the graph $P_n$ consist of $n+2$ vertices all connected in a line. We label the vertices left to right by $0$, $1$, $\cdots$, $n+1$. We are \emph{not} considering $P_n$ as a subgraph of any larger graph. The permutation group on $n$ letters will be denoted by $S_n$. For any $I \in S_n$, write $I = (i_1,\ i_2,\ \cdots,\ i_n)$, and associate it to a function $f_I$ on $P_n$ defined by
\begin{equation*}
    f_I(p) \equiv \left\{
    \begin{array}{rl}
    e_0 & \text{if } p = 0\\
    x_{i_p} & \text{if } p \in [1,n]\\
    e_{n+1} & \text{if } p = n+1
    \end{array} \right.
\end{equation*}
Now define a map $\hat{R}:S_n \to \RR$ by sending $I$ to the numerator of the Rayleigh quotient of $f_I$:
\[I \mapsto (e_0-x_{i_1})^2 + (x_{i_1}-x_{i_2})^2 + \cdots + (x_{i_{n-1}}-x_{i_n})^2 + (x_{i_n}-e_{n+1})^2\]

Our proof of Theorem \ref{symmTheo} will rely on finding which $I \in S_n$ minimize $\hat{R}$. Towards this goal we now define some candidate minimizers $J_R = (j^r_1,\ j^r_2, \ \cdots,\ j^r_n)$ and $J_L = (j^l_1,\ j^l_2,\ \cdots,\ j^l_n)$ by
\begin{enumerate}
    \item $j^r_{\lceil n/2\rceil} = n$, $j^r_{\lceil n/2\rceil + 1} = n-1$, $j^r_{\lceil n/2\rceil - 1} = n-2$, $j^r_{\lceil n/2\rceil + 2} = n-3$, etc.
    \item $j^l_{\lceil n/2\rceil} = n$, $j^l_{\lceil n/2\rceil - 1} = n-1$, $j^l_{\lceil n/2\rceil + 1} = n-2$, $j^l_{\lceil n/2\rceil - 2} = n-3$, etc.
\end{enumerate}
The function $f_{J_R}$ has the following values on $P_n$:
\begin{center}
            \begin{tikzpicture}[description/.style={fill=white,inner sep=2pt}]
                    \matrix (m) [matrix of math nodes, row sep=3em,
                    column sep=1.5em, text height=1.5ex, text depth=0.25ex]
                    { e_0 & \cdots & x_{n-4} & x_{n-2} & x_n & x_{n-1} & x_{n-3} & \cdots & e_{n+1} \\};
                    \path[-,font=\scriptsize]
                    (m-1-1) edge node[auto] {} (m-1-2)
                    (m-1-2) edge node[auto] {} (m-1-3)
                    (m-1-3) edge node[auto] {} (m-1-4)
                    (m-1-4) edge node[auto] {} (m-1-5)
                    (m-1-5) edge node[auto] {} (m-1-6)
                    (m-1-6) edge node[auto] {} (m-1-7)
                    (m-1-7) edge node[auto] {} (m-1-8)
                    (m-1-8) edge node[auto] {} (m-1-9);
            \end{tikzpicture}
\end{center}
Note that this corresponds to the permutation associated to the restriction of $f^{\star}$ to a fixed horizontal slice.
\begin{lemm}\label{horizontalLemm} $\hat{R}$ achieves its minimum value at both $J_R$ and $J_L$.
\end{lemm}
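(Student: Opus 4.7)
The plan is to reduce the optimization algebraically and then run a rearrangement-style argument. Expanding $(a_{k+1}-a_k)^2 = a_k^2 + a_{k+1}^2 - 2 a_k a_{k+1}$ and summing over $k=0,\ldots,n$ (with $a_k = f_I(k)$, $a_0 = e_0$, $a_{n+1} = e_{n+1}$) yields
\[\hat{R}(I) = e_0^2 + e_{n+1}^2 + 2\sum_{k=1}^n x_k^2 - 2\sum_{k=0}^n a_k a_{k+1},\]
so only the last sum depends on $I$. Hence minimizing $\hat{R}$ is equivalent to maximizing the adjacent-product functional $P(I) = \sum_{k=0}^{n} a_k a_{k+1}$ over permutations.

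The central computation is a \emph{2-opt reversal}: if $I'$ is obtained from $I$ by reversing the block $(a_p, a_{p+1}, \ldots, a_q)$ for some $1 \leq p \leq q \leq n$, then every squared difference internal to the block is preserved (the block is merely traversed in reverse), and only the two boundary differences change. A short algebraic calculation gives
\[\hat{R}(I') - \hat{R}(I) = 2(a_q - a_p)(a_{q+1} - a_{p-1}).\]
At any minimizer this must be $\geq 0$ for every admissible block. I would use this to prove that the sequence at a minimizer is \emph{unimodal} with peak $x_n$: since $a_0, a_{n+1} \leq x_i$ for every $i$, any failure of unimodality is an interior local minimum at some $p$ with $a_{p-1} > a_p < a_{p+1}$. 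If $a_{p+1} > a_{p-2}$ the adjacent swap at $(p-1,p)$ strictly decreases $\hat R$; if $a_{p+2} < a_{p-1}$ the swap at $(p,p+1)$ does. Otherwise $a_{p+1} \leq a_{p-2}$ and $a_{p+2} \geq a_{p-1}$, in which case reversing the length-$3$ block $(a_{p-1},a_p,a_{p+1})$ strictly decreases $\hat R$ (the derived inequalities force both factors in the 2-opt formula to have the right sign). Near the boundary, one uses $e_0, e_{n+1} \leq x_i$ to handle the missing neighbors.

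Once unimodality is established with peak $x_n$ at some position $m$, the arrangement is fully determined by which $(m-1)$-subset $L \subset \{x_1, \ldots, x_{n-1}\}$ sits to the left of $x_n$ in increasing order, with complement $R$ on the right in decreasing order. Expressing $\hat{R}$ in terms of the partition $(L,R)$ alone, and applying a straightforward exchange argument — transpose $x_i \in L$ with $x_j \in R$ for $i < j$ and record the sign of the resulting change — one concludes that the optimal partitions are precisely $L = \{x_{n-2}, x_{n-4}, \ldots\}$, $R = \{x_{n-1}, x_{n-3}, \ldots\}$ and its reflection. These are exactly the arrangements $J_R$ and $J_L$.

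The main obstacle will be the unimodality step: single adjacent swaps do not always strictly decrease $\hat{R}$ near an interior local minimum, so one must carefully combine 2-opt reversals of several different lengths and track the boundary interactions with $e_0$ and $e_{n+1}$. The final partition-optimization is essentially forced once one writes out the exchange-argument inequality and uses the ordering $x_1 \leq \cdots \leq x_n$.
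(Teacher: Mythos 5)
Your reduction to maximizing $\sum_k a_k a_{k+1}$ and your reversal formula $\hat{R}(I')-\hat{R}(I)=2(a_q-a_p)(a_{q+1}-a_{p-1})$ are both correct (the latter is exactly the computation the paper uses in Lemma \ref{strictHorizontalLemm}), but the unimodality step has a genuine gap: your third case is false. There you know $a_{p+1}\le a_{p-2}$ and $a_{p+2}\ge a_{p-1}$, and you need the two factors $a_{p+1}-a_{p-1}$ and $a_{p+2}-a_{p-2}$ to have opposite signs; those hypotheses do not control either sign. Concretely, take $n=5$, $e_0=e_6=0$, $\{x_i\}=\{1,2,9,9.5,10\}$ and the arrangement $0,\,10,\,9,\,1,\,2,\,9.5,\,0$. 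Its unique interior local minimum is at $p=3$ (value $1$); cases 1 and 2 do not apply, and the case-3 hypotheses hold since $a_4=2\le 10=a_1$ and $a_5=9.5\ge 9=a_2$, yet all three of your moves strictly \emph{increase} $\hat R$: the swap at positions $(2,3)$ changes it by $2(1-9)(2-10)=128$, the swap at $(3,4)$ by $2(2-1)(9.5-9)=1$, and the length-$3$ reversal by $2(2-9)(9.5-10)=7$. The arrangement is of course far from optimal ($\hat R=312.5$ versus $126.5$ for $J_R$), but the improving move lives elsewhere (reversing positions $1,2$ lowers $\hat R$ by $2$); so unimodality cannot be forced by moves confined to a fixed window around the interior local minimum, and your proof does not supply the non-local move that is actually needed.

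Two further remarks. First, even granting unimodality with peak $x_n$, the final step is undersold: transposing $x_i\in L$ with $x_j\in R$ forces a re-sorting of each side and disturbs up to four squared differences, so the ``straightforward exchange argument'' also needs to be written out. Second, for comparison, the paper avoids the local-move analysis entirely by inducting on $n$: delete the largest value $x_n$ and reconnect its two neighbors $a,b$, giving $\hat R(I)=\hat R(I')+(x_n-a)^2+(x_n-b)^2-(a-b)^2$, note that $\hat R(J_R)=\hat R(J_L^{(n-1)})+(x_n-x_{n-1})^2+(x_n-x_{n-2})^2-(x_{n-1}-x_{n-2})^2$, and close with the inductive hypothesis plus the identity $2[(x_n-b)(x_{n-1}-a)+(x_n-x_{n-1})(x_{n-2}-b)]\ge 0$. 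If you want to salvage your route, you should either adopt that induction or prove unimodality using reversals of the unrestricted form of Lemma \ref{strictHorizontalLemm} together with an argument that every non-unimodal arrangement admits one.
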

\begin{proof}We use induction on $n$. The base case is trivial so let us assume the lemma holds for $n=1$. Let $J_R^{(n-1)}$ and $J_L^{(n-1)}$ be the minimizers associated to $x_1 \leq x_2 \leq \cdots \leq x_{n-1}$. The proofs for $J_R$ and $J_L$ are symmetric so we will just consider the $J_R$ case. Let $I \in S_n$. We need to show that $\hat{R}(I) - \hat{R}(J_R) \geq 0$.
\begin{align*}
\hat{R}(J_R) &= (e_0-x_{j^r_1})^2 + (x_{j^r_1} - x_{j^r_2})^2 + \cdots + (x_{n-4} - x_{n-2})^2\\
       &\ + (x_{n-2} - x_n)^2 + (x_n-x_{n-1})^2 + (x_{n-1}-x_{n-3})^2 + \cdots \\
       &\ + (x_{j^r_{n-1}}-x_{j^r_n})^2 + (x_{j_n^r}-e_{n+1})^2\\
       &= \Big[(e_0-x_{j_1^r})^2 + (x_{j_1^r} - x_{j_2^r})^2 + \cdots +(x_{n-4}-x_{n-2})^2\\
       &\ + (x_{n-1}-x_{n-2})^2 + (x_{n-1}-x_{n-3})^2 + \cdots + (x_{j^r_{n-1}}-x_{j_n^r})^2\\
       &\ + (x_{j_n^r}-e_{n+1})^2\Big] - (x_{n-1}-x_{n-2})^2 + (x_n-x_{n-2})^2 + (x_n-x_{n-1})^2
\end{align*}
The bracketed terms are exactly $\hat{R}(J_L^{(n-1)})$. To see this, simply consider the relevant diagram for $J_L^{(n-1)}$:
\begin{center}
            \begin{tikzpicture}[description/.style={fill=white,inner sep=2pt}]
                    \matrix (m) [matrix of math nodes, row sep=3em,
                    column sep=1.5em, text height=1.5ex, text depth=0.25ex]
                    { e_0 & \cdots & x_{n-4} & x_{n-2} & x_{n-1} & x_{n-3} & x_{n-5} & \cdots & e_{n+1} \\};
                    \path[-,font=\scriptsize]
                    (m-1-1) edge node[auto] {} (m-1-2)
                    (m-1-2) edge node[auto] {} (m-1-3)
                    (m-1-3) edge node[auto] {} (m-1-4)
                    (m-1-4) edge node[auto] {} (m-1-5)
                    (m-1-5) edge node[auto] {} (m-1-6)
                    (m-1-6) edge node[auto] {} (m-1-7)
                    (m-1-7) edge node[auto] {} (m-1-8)
                    (m-1-8) edge node[auto] {} (m-1-9);
            \end{tikzpicture}
\end{center}
Thus we conclude that
\[\hat{R}(J_R) = \hat{R}(J_L^{(n-1)}) - (x_{n-1}-x_{n-2})^2 + (x_n-x_{n-2})^2 + (x_n-x_{n-1})^2\]
A completely analogous argument implies that
\[\hat{R}(I) = \hat{R}\left(I'\right) + (x_n-a)^2 + (x_n-b)^2 - (a-b)^2\]
where $I' \in S_{n-1}$ is associated to some permutation of $x_1 \leq \cdots \leq x_{n-1}$ and $a,b$ are non-negative real numbers with $a \leq x_{n-1}$ and $b \leq x_{n-2}$. Now
\begin{align*}
\hat{R}(I) - \hat{R}(J_R) &= \hat{R}(I') - \hat{R}(J_L^{(n-1)}) - (x_n-x_{n-1})^2 - (x_n-x_{n-2})^2\\
              &\ + (x_{n-1}-x_{n-2})^2 + (x_n-a)^2 + (x_n-b)^2 - (a-b)^2\\
              &\geq -(x_n-x_{n-1})^2 - (x_n-x_{n-2})^2 + (x_{n-1}-x_{n-2})^2\\
              &\ + (x_n-a)^2 + (x_n-b)^2 - (a-b)^2\\
              &= 2x_nx_{n-1}+ 2x_nx_{n-2} - 2x_{n-1}x_{n-2}\\
              &\ -2x_na - 2x_nb + 2ab\\
              &= 2[(x_n-b)(x_{n-1}-a)+(x_n-x_{n-1})(x_{n-2}-b)]\\
              &\geq 0
\end{align*}
\end{proof}
This implies
\begin{lemm}Let $f$ be a function on $\overline{G}$ that is positive on $G$ and $0$ on $\pa G$. Then $H_k(f^{\star}) \leq H_k(f)$.
\end{lemm}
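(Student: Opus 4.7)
The plan is to reduce $H_k(f^{\star}) \leq H_k(f)$ to a single application of Lemma \ref{horizontalLemm}. First, I would let $[x_{\min}, x_{\max}]$ be the smallest horizontal interval containing $G^k$, set $M = x_{\max} - x_{\min} + 1$, and read off $a_j = f(x_{\min}+j-1, k)$ for $j = 1, \ldots, M$, where entries at positions not in $G$ are taken to be zero. The vertices $(x_{\min}-1, k)$ and $(x_{\max}+1, k)$ either lie in $\pa G$ or are isolated from $G^k$; in either case the extended $f$ vanishes there, so $H_k(f)$ equals $\hat{R}(I)$ for some $I \in S_M$ evaluated on the multiset $\{a_j\}$ with boundary values $e_0 = e_{M+1} = 0$.

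Second, I would compute $H_k(f^{\star})$. By construction $(G^{\star})^k$ is a single horizontal path with $N = |G^k|$ vertices, and $f^{\star}$ assigns the largest value to the center vertex and then alternates right, left, right, left, and so on. Unwinding the definition of $J_R$ in Lemma \ref{horizontalLemm} shows that this assignment coincides exactly with the $J_R$-arrangement of the $N$ nonzero values on a length-$N$ path. Since $f^{\star}$ vanishes outside this path, $H_k(f^{\star})$ equals $\hat{R}$ on a length-$N$ path evaluated at $J_R$ with boundary $e_0 = e_{N+1} = 0$.

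The key identification is that this length-$N$ value is the same as $\hat{R}^{(M)}(J_R)$ applied to the full padded multiset $\{a_j\}$. Indeed, the $J_R$ rule places the largest value at the middle position $\lceil M/2 \rceil$ and alternates outward, so the nonzero values occupy a contiguous block in the middle of the length-$M$ path and appear in the induced $J_R$-pattern, while the $M - N$ zeros fill both tails. The tails contribute nothing to $\hat{R}$, because consecutive zeros give vanishing squared differences and the zero boundary conditions $e_0 = e_{M+1} = 0$ annihilate the remaining terms. Combining this equality with the inequality $\hat{R}^{(M)}(J_R) \leq \hat{R}^{(M)}(I)$ supplied by Lemma \ref{horizontalLemm} yields $H_k(f^{\star}) \leq H_k(f)$.

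The only potentially delicate point is verifying that the $J_R$-arrangement of the padded multiset really does place all zeros at the tails with the nonzero values contiguous in the middle, and that the induced pattern on those nonzero entries really is the $J_R$-pattern on $N$ entries rather than some shifted variant. This is a purely combinatorial unwinding of the definition of $J_R$, but care is needed with the parities of $M$ and $N$ to ensure that the extra vertex of a positively centered path lies on the correct side of the center and that the sorted indexing of the values lines up properly.
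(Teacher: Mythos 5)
Your proposal is correct and follows essentially the same route as the paper: both reduce the claim to Lemma \ref{horizontalLemm} applied to a zero-padded horizontal window with $e_0 = e_{n+1} = 0$, after observing that $f^{\star}$ restricted to that window realizes the minimizing permutation $J_R$ while $f$ realizes some other permutation of the same multiset. The paper sidesteps your "delicate point" by taking a large window symmetric about $j=0$ (so that the restriction of $f^{\star}$ is literally $J_R^{(2N+1)}$), whereas your minimal window forces the extra (correct) verification that the zeros sit in the tails and contribute nothing; this is a cosmetic difference only.
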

\begin{proof}For some large $N$, $H_k(f^{\star}) = \sum_{j=-N}^N(f^{\star}(j+1,k)-f^{\star}(j,k))^2$, $H_k(f) = \sum_{j=-N}^N(f(j+1,k)-f(j,k))^2$, and $f(-N,k) = f(N,k) = f^{\star}(-N,k) = f^{\star}(N,k) = 0$. The values of $f^{\star}$ along $\{(j,k)\}_{j=-N}^N$ are a permutation of the values of $f$ along $\{(j,k)\}_{j=-N}^N$. Lemma \ref{horizontalLemm} applies with $n = 2N+1$. The minimizing permutation $J^{(2N+1)}_R$ exactly corresponds to the restriction of $f^{\star}$ to $\{(j,k)\}_{j=-N}^N$.
\end{proof}
Now we need a combinatorial lemma to handle the vertical Rayleigh quotient. As before, we first establish some notation. Let $x_1\leq x_2\leq \cdots \leq x_n$ and $y_1 \leq y_2 \leq \cdots \leq y_n$ be fixed collections of non-negative real numbers. For any $I \in S_n$ we have an associated diagram
\begin{center}
            \begin{tikzpicture}[description/.style={fill=white,inner sep=2pt}]
                    \matrix (m) [matrix of math nodes, row sep=3em,
                    column sep=2.5em, text height=1.5ex, text depth=0.25ex]
                    { x_1     & x_2     & \cdots & x_{n-1}     & x_n \\
                      y_{i_1} & y_{i_2} & \cdots & y_{i_{n-1}} & y_{i_n}\\};
                    \path[-,font=\scriptsize]
                    (m-1-1) edge node[auto] {} (m-2-1)
                    (m-1-2) edge node[auto] {} (m-2-2)
                    (m-1-3) edge node[auto] {} (m-2-3)
                    (m-1-4) edge node[auto] {} (m-2-4)
                    (m-1-5) edge node[auto] {} (m-2-5);
            \end{tikzpicture}
\end{center}
Define a function $\tilde{R}: S_n \to \RR$ as the numerator of the Rayleigh quotient of the above diagram:
\[I \mapsto (x_1-y_{i_1})^2 + (x_2 - y_{i_2})^2 + \cdots + (x_n - y_{i_n})^2\]
\begin{lemm}\label{verticalLemm}$\tilde{R}$ achieves it minimum value at $I^n = (1,\ 2,\ \cdots,\ n)$.
\end{lemm}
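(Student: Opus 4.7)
The plan is to prove this by an adjacent-swap (bubble-sort) exchange argument, which is the discrete analogue of the rearrangement inequality. The key observation is that whenever a permutation $I = (i_1,\ldots,i_n)$ is not the identity $I^n$, there must be an ``inversion,'' i.e.\ a pair of adjacent indices $j$ and $j+1$ with $i_j > i_{j+1}$. I would show that swapping these two entries can only decrease (weakly) the value of $\tilde{R}$, and then iterate: since any permutation can be sorted into the identity by finitely many adjacent transpositions, $I^n$ must realize the minimum.

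More precisely, define $I'$ to agree with $I$ except with $i_j$ and $i_{j+1}$ swapped. Then all terms of $\tilde{R}(I)$ and $\tilde{R}(I')$ cancel except those involving $x_j$ and $x_{j+1}$, so
\begin{align*}
\tilde{R}(I) - \tilde{R}(I') &= (x_j - y_{i_j})^2 + (x_{j+1} - y_{i_{j+1}})^2 - (x_j - y_{i_{j+1}})^2 - (x_{j+1} - y_{i_j})^2 \\
&= -2x_j y_{i_j} - 2x_{j+1}y_{i_{j+1}} + 2x_j y_{i_{j+1}} + 2x_{j+1} y_{i_j} \\
&= 2(x_{j+1} - x_j)(y_{i_j} - y_{i_{j+1}}).
\end{align*}
Because $x_j \leq x_{j+1}$ by the ordering of the $x$'s, and $y_{i_{j+1}} \leq y_{i_j}$ by our choice of inversion (together with the ordering of the $y$'s), both factors are nonnegative. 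Hence $\tilde{R}(I') \leq \tilde{R}(I)$.

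Starting from any $I$, I repeatedly apply such swaps at inversions. Each swap strictly decreases the number of inversions, so after at most $\binom{n}{2}$ steps we reach the unique inversion-free permutation, namely $I^n$, and throughout the process $\tilde{R}$ has only weakly decreased. Therefore $\tilde{R}(I) \geq \tilde{R}(I^n)$ for every $I \in S_n$, which is the claim. There is no real obstacle here: the computation above is essentially the entire content, and the only ``choice'' is to frame the argument via adjacent swaps rather than via the general rearrangement inequality, since the adjacent-swap framing makes the monotonicity of $\tilde{R}$ under the sorting procedure completely transparent and needs no outside input.
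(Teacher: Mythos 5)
Your proof is correct, but it takes a different route from the paper. The paper proves Lemma \ref{verticalLemm} by induction on $n$: given $J \in S_n$, it isolates the position $a$ with $j_a = n$, forms a permutation $J' \in S_{n-1}$, and shows $\tilde{R}(J) - \tilde{R}(I^n) \geq 2(x_n - x_a)(y_n - y_{j_n}) \geq 0$ after invoking the inductive hypothesis; in effect it moves the largest $y$-value to sit opposite $x_n$ and recurses. You instead run a bubble-sort exchange argument: locate an adjacent inversion $i_j > i_{j+1}$, verify the swap identity $\tilde{R}(I) - \tilde{R}(I') = 2(x_{j+1}-x_j)(y_{i_j}-y_{i_{j+1}}) \geq 0$, and iterate, using the fact that each swap reduces the inversion count so the process terminates at $I^n$ with $\tilde{R}$ weakly decreasing throughout. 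Both arguments are complete; yours is arguably the more standard rearrangement-inequality argument and avoids setting up the auxiliary permutation $J'$ and the bookkeeping of the induction, while the paper's inductive scheme is the same template it reuses for the horizontal lemma (Lemma \ref{horizontalLemm}), where endpoints make a naive swap argument less immediate, so the paper gains uniformity across its two combinatorial lemmas. Note also that your swap computation is essentially the same algebraic identity the paper uses later in Lemma \ref{strictVerticalLemm} (with non-adjacent positions and strict inequalities) to characterize when symmetrization strictly lowers the Rayleigh quotient, so your approach dovetails naturally with the strictness results that follow.
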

\begin{proof}We will use induction on $n$. The claim is immediate for $n=1$ so let us assume that the lemma has been proven for $n-1$. Let $I^{(n-1)} \in S_{n-1}$ be given by $I^{(n-1)} = (1,\ 2,\ \cdots,\ n-1)$. Choose any $J \in S_n$. We need to show that $\tilde{R}(J) - \tilde{R}(I^n) \geq 0$. If $J = (\hat{J},\ n)$, then $\tilde{R}(J) - \tilde{R}\left(I^n\right) = \tilde{R}\left(\hat{J}\right) - \tilde{R}\left(I^{(n-1)}\right)$, and an application of the induction hypothesis gives $\tilde{R}(J) - \tilde{R}\left(I^n\right) \geq 0$. So assume that $j_n \neq n$, and fix $a$ such that $j_a = n$.
We have
\begin{align*}
\tilde{R}(J) &= (x_1-y_{j_1})^2 + \cdots +(x_a-y_n)^2 + \cdots + (x_n-y_{j_n})^2\\
     &= (x_1 -y_{j_1})^2 + \cdots + (x_{a-1}-y_{j_{a-1}})^2 + (x_a - y_{j_n})^2\\
     &\ - (x_a - y_{j_n})^2 + (x_a-y_n)^2 + (x_{a+1}-y_{j_{a+1}})^2 + \cdots \\
     &\ + (x_{n-1}-y_{j_{n-1}})^2 + (x_n-y_{j_n})^2 \\
     &= \Big[(x_1 -y_{j_1})^2 + \cdots + (x_{a-1}-y_{j_{a-1}})^2 + (x_a - y_{j_n})^2\\
     &\ + (x_{a+1}-y_{j_{a+1}})^2 + \cdots + (x_{n-1}-y_{j_{n-1}})^2\Big]\\
     &\ - (x_a - y_{j_n})^2 + (x_a-y_n)^2 + (x_n-y_{j_n})^2
\end{align*}
Let $J' = (j_1,\ j_2,\ \cdots, j_{a-1},\ j_n,\ j_{a+1},\ \cdots,\ j_{n-1}) \in S_{n-1}$. Then the bracketed terms are equal to $\tilde{R}(J')$. Hence
\[\tilde{R}(J) = \tilde{R}(J') - (x_a - y_{j_n})^2 + (x_a-y_n)^2 + (x_n-y_{j_n})^2\]

We also have $\tilde{R}\left(I^n\right) = \tilde{R}\left(I^{(n-1)}\right) + (x_n-y_n)^2$. Putting this together gives
\begin{align*}
\tilde{R}(J) - \tilde{R}\left(I^n\right) &= \tilde{R}\left(J'\right)- \tilde{R}\left(I^{(n-1)}\right) - (x_a-y_{j_n})^2\\
              &\ + (x_a - y_n)^2 + (x_n-y_{j_n})^2 - (x_n-y_n)^2\\
              &\geq (x_a - y_n)^2 + (x_n-y_{j_n})^2 - (x_a-y_{j_n})^2 - (x_n-y_n)^2\\
              &= -2x_ay_n - 2x_ny_{j_n} + 2x_ay_{j_n} + 2x_ny_n\\
              &= 2(x_n-x_a)(y_n-y_{j_n})\\
              &\geq 0
\end{align*}
\end{proof}
\begin{lemm}Let $f$ be a function on $\overline{G}$ that is positive on $G$ and $0$ on $\pa G$. Then $V_k(f^{\star}) \leq V_k(f)$.
\end{lemm}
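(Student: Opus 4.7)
The plan is to reduce the inequality to the combinatorial Lemma \ref{verticalLemm} by expressing both $V_k(f)$ and $V_k(f^{\star})$ as sums of squared differences over pairings of the \emph{same} two multisets of values, and then invoking that lemma to identify the symmetrized pairing as the minimizer.

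First I would set $n_k = |G^k|$ and $n_{k+1} = |G^{k+1}|$, and assume without loss of generality $n_k \geq n_{k+1}$. Let $\alpha_1 \geq \cdots \geq \alpha_{n_k}$ denote the values of $f$ on $G^k$ sorted in decreasing order, and $\beta_1 \geq \cdots \geq \beta_{n_{k+1}}$ the values on $G^{k+1}$, and define $\bar\beta_i = \beta_i$ for $i \leq n_{k+1}$, $\bar\beta_i = 0$ otherwise. Since on each horizontal slice the values of $f^{\star}$ are a permutation of those of $f$, these same multisets describe $f^{\star}$ on the corresponding slices.

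Next I would use the definition of positive horizontal symmetrization: on each slice, the positions $0, 1, -1, 2, -2, \ldots$ are filled in that order with the $i$th largest value placed at the $i$th such position. In particular, $n_k \geq n_{k+1}$ forces $G^{\star,k+1} \subseteq G^{\star,k}$, and the rank-$i$ position on the two symmetrized slices coincides, so after extending $f^{\star}$ by zero outside $G^{\star}$ one obtains
\[
V_k(f^{\star}) = \sum_{i=1}^{n_k} (\alpha_i - \bar\beta_i)^2,
\]
which is precisely the pairing obtained by listing both sequences in decreasing order and matching them term by term. On the other hand, $V_k(f) = \sum_j (f(j,k) - f(j,k+1))^2$ pairs the same multiset $\{\alpha_i\}$ (padded with zeros at columns where only $G^{k+1}$ has a vertex) with the same multiset $\{\beta_j\}$ (padded with zeros at columns where only $G^k$ has a vertex), but the pairing is dictated by the column positions of $G$. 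By appending enough $(0,0)$ dummy pairs to $V_k(f^{\star})$, both quantities can be written as $\sum_{i=1}^N (x_i - y_i)^2$ over a common pair of multisets $X, Y$ of a common size $N$, differing only in the permutation used to pair them.

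The last step is to invoke Lemma \ref{verticalLemm}: sorting both $X$ and $Y$ in increasing order and unwinding the above bookkeeping shows that the identity permutation $I^N$ singled out by the lemma reproduces exactly the symmetrized pairing, while the column-induced pairing of $V_k(f)$ corresponds to some other element of $S_N$; the lemma then yields $V_k(f^{\star}) \leq V_k(f)$. The step I expect to require the most care is verifying that alignment claim: namely, that the positive horizontal symmetrization produces two slices whose positions occupy a common nested ordering, so that the pairing it induces really is the sorted-with-sorted pairing after zero-padding. Once that bookkeeping is confirmed, the inequality follows from Lemma \ref{verticalLemm} without further work.
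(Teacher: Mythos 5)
Your proposal is correct and follows essentially the same route as the paper: both reduce $V_k(f^{\star})\le V_k(f)$ to Lemma \ref{verticalLemm} by zero-padding the two slices to a common multiset and observing that, by construction of the positive horizontal symmetrization, $f^{\star}$ realizes the sorted-with-sorted pairing (the minimizing permutation $I^n$), while $f$ realizes some other permutation. You just spell out the alignment/padding bookkeeping more explicitly than the paper does.
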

\begin{proof}Choose $N$ large enough so that $V_k(f^{\star}) = \sum_{k=-N}^N(f^{\star}(j,k+1)-f^{\star}(j,k))^2$ and $V_k(f) = \sum_{k=-N}^N(f(j,k+1)-f(j,k))^2$. The values of $f^{\star}$ along $G^k$ and $G^{k-1}$ are just permutations of the values of $f$ along $G^k$ and $G^{k-1}$. Hence Lemma \ref{verticalLemm} is applicable. The vertical Rayleigh quotient $V_k(f)$ is equal to $R(J)$ for some $J \in S_n$. Next we see that $V_k(f^{\star})$ pairs the greatest values of the two slices together, the next two greatest together, etc. Thus $V_k(f^{\star}) = R(I^n)$. Then Lemma \ref{verticalLemm} implies $V_k(f^{\star}) \leq V_k(f)$.
\end{proof}
This concludes the proof of Theorem \ref{symmTheo}.

As is, Theorem \ref{symmTheo} is not terribly useful because it does not produce strict inequalities. Thus, it will never show that a subgraph is not a minimizing subgraph. Next, we characterize some classes of subgraphs where the inequality in Theorem \ref{symmTheo} is strict.

Our first such result is
\begin{theo}\label{symmTheoStrict}Suppose that $G$ is a connected subgraph such for some $h$, the $h$ horizontal slice is non-empty and disconnected. Then
\[\lam_D(G^{\star}) < \lam_D(G)\]
\end{theo}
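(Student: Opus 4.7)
The strategy is to use a principal eigenfunction $f$ of $G$ as a test function on $G^{\star}$. By Corollary~\ref{eig}, $f > 0$ on $G$ and Theorem~\ref{rayleighQuotientD} gives $R_G(f) = \lam_D(G)$, while the same theorem gives $\lam_D(G^{\star}) \leq R_{G^{\star}}(f^{\star})$. So it suffices to prove the strict inequality $R_{G^{\star}}(f^{\star}) < R_G(f)$. The proof of Theorem~\ref{symmTheo} decomposes the Rayleigh quotient as $R_G(f) = \sum_k H_k(f) + \sum_k V_k(f)$ (after extending $f$ by zero) and shows $H_k(f^{\star}) \leq H_k(f)$ and $V_k(f^{\star}) \leq V_k(f)$ for every $k$, so strict inequality in a single slice is enough. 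I will prove $H_h(f^{\star}) < H_h(f)$ at the bad height $h$.

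The situation along row $h$ is as follows. Because $G^h$ has at least two connected components, the rightmost vertex $(a,h)$ of some component has its right neighbour $(a+1,h)$ in $\pa G$: if instead $(a+1,h) \in G$, then it would lie in $G^h$ and be adjacent to $(a,h)$, so it would belong to the same component of $G^h$, contradicting that $(a,h)$ is rightmost. Hence $f(a+1,h) = 0$ while $f(a,h) > 0$ and $f(b,h) > 0$ for some $b > a+1$ in the next component. So the positive entries of $f$ along row $h$ split into at least two contiguous blocks separated by at least one zero. The positive entries of $f^{\star}$ along row $h$, in contrast, form a single contiguous block centred about the $y$-axis, by the very definition of the positively centered horizontal path. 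In the notation of Lemma~\ref{horizontalLemm} with $e_0 = e_{n+1} = 0$ and $n = 2N+1$ sufficiently large, the $f^{\star}$-arrangement is precisely $J_R$, while the $f$-arrangement is some $I \in S_n$ whose positive values are not contiguous.

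The remaining step is a strict refinement of Lemma~\ref{horizontalLemm}: if the positive values of the arrangement $I$ do not occupy a single contiguous block of positions, then $\hat{R}(I) > \hat{R}(J_R)$. I would prove this by rerunning the induction from Lemma~\ref{horizontalLemm} while tracking strictness. In the inductive identity
\[
\hat{R}(I) - \hat{R}(J_R) = \bigl[\hat{R}(I') - \hat{R}(J_L^{(n-1)})\bigr] + 2\bigl[(x_n - b)(x_{n-1} - a) + (x_n - x_{n-1})(x_{n-2} - b)\bigr],
\]
at least one side must contribute strict positivity when $I$ is split. If $x_n$ sits in $I$ adjacent to a zero (for instance when its block is a singleton), then $a = 0 < x_{n-1}$ forces the geometric bracket to be strictly positive; otherwise $x_n$ is flanked by two positives in its own block, removing it leaves an arrangement $I'$ that still has positive values split into at least two blocks, and the inductive hypothesis supplies the strict inequality on $I'$. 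Applied to our setup, the strict refinement yields $H_h(f^{\star}) < H_h(f)$, hence $R_{G^{\star}}(f^{\star}) < R_G(f) = \lam_D(G)$, and finally $\lam_D(G^{\star}) < \lam_D(G)$.

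The main obstacle is the strict refinement itself. Each non-negative term in the inductive identity can degenerate separately (e.g.\ when several of the top values $x_n, x_{n-1}, x_{n-2}$ coincide, or when the gap in $I$ lies far away from where the maximum sits), and one must carefully track the position of the zero gap relative to the largest values through every induction step; the delicate point is to verify that splitting cannot somehow persist all the way through equality cases. As a fallback one can replace the induction with a direct swap argument: for any split arrangement, one exhibits an explicit local transposition of a zero and a positive entry near a block boundary and computes that its effect on $\hat{R}$ is strictly negative, so iterating these transpositions produces $J_R$ and establishes the strict inequality.
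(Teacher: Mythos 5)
Your reduction is exactly the paper's: use a principal eigenfunction $f$ of $G$ (positive on $G$ by Corollary \ref{eig}) as a test function, invoke the slice decomposition and the non-strict inequalities $H_k(f^{\star}) \leq H_k(f)$, $V_k(f^{\star}) \leq V_k(f)$ from the proof of Theorem \ref{symmTheo}, and isolate the single strict inequality $H_h(f^{\star}) < H_h(f)$. Your description of the combinatorial configuration (at least two contiguous blocks of positive values separated by a zero, versus the single centered block of $J_R$) is also correct. The gap is that the key combinatorial step --- the strict refinement of Lemma \ref{horizontalLemm} --- is not actually proved on either of your routes. The induction founders exactly where you admit it does: in your displayed identity both bracketed quantities can vanish simultaneously (e.g.\ when repeated values force $a = x_{n-1}$ and $b = x_{n-2}$), and deleting $x_n$ from $I$ can merge the two blocks, so the inductive hypothesis need not apply to $I'$. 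The fallback is not sound as stated either: for the configuration $\ldots, q, 0, r, s, \ldots$ with $q, r, s > 0$, transposing the boundary zero with the adjacent positive entry $r$ changes $\hat{R}$ by $2r(s-q)$, which is \emph{positive} when $s > q$; so a ``local transposition near a block boundary'' can go the wrong way, and in any case iterating local moves all the way to $J_R$ is more than you need.

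The paper's fix is a one-shot segment reversal (Lemma \ref{strictHorizontalLemm}): if $x_{i_l} > x_{i_k}$ and $x_{i_{l-1}} < x_{i_{k+1}}$ for some $l < k$, reversing positions $l$ through $k$ alters only the two edge terms at the ends of the reversed segment and decreases $\hat{R}$ by exactly $2(x_{i_l}-x_{i_k})(x_{i_{k+1}}-x_{i_{l-1}}) > 0$. For a disconnected slice one takes $l$ at the left end of the first block (so $x_{i_{l-1}} = 0$ and $x_{i_l} > 0$) and $k$ at the zero immediately preceding the next block (so $x_{i_k} = 0$ and $x_{i_{k+1}} > 0$); the decrease is $2x_{i_l}x_{i_{k+1}} > 0$ with no degenerate cases to track. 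This shows only that the arrangement $I$ coming from $f$ is \emph{not} a minimizer of $\hat{R}$; since Lemma \ref{horizontalLemm} already identifies the $f^{\star}$-arrangement $J_R$ as a minimizer, $H_h(f^{\star}) = \hat{R}(J_R) < \hat{R}(I) = H_h(f)$ follows immediately, with no need to connect $I$ to $J_R$ by a chain of improving moves. If you adopt this single-reversal lemma in place of your two incomplete routes, the rest of your argument goes through verbatim.
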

As with the proof of Theorem \ref{symmTheo}, the proof of Theorem \ref{symmTheoStrict} relies on a purely combinatorial lemma. We use the notation from Lemma \ref{horizontalLemm}. Recall that
\begin{enumerate}
\item $R_G(f) = \sum_{k=-\infty}^{\infty}H_k(f) + \sum_{k=-\infty}^{\infty}V_k(f)$
\item $H_k(f^{\star}) \leq H_k(f)\ \forall k$
\item $V_k(f^{\star}) \leq V_k(f)\ \forall k$
\end{enumerate}
Hence, the theorem will follow if we establish $H_h(f^{\star}) < H_h(f)$.
\begin{lemm}\label{strictHorizontalLemm} Suppose  $I \in S_n$ such that for some $l < k$; $x_{i_l} > x_{i_k}$ and $x_{i_{l-1}} < x_{i_{k+1}}$. Then $\hat{R}$ does not achieve its minimum value at $I$.
\end{lemm}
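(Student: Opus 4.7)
My strategy is to exhibit a new permutation $I' \in S_n$ with $\hat{R}(I') < \hat{R}(I)$, constructed by \emph{reversing} the block of indices at positions $l$ through $k$. Explicitly, I would set
\[I' = (i_1, \ldots, i_{l-1},\ i_k, i_{k-1}, \ldots, i_{l+1}, i_l,\ i_{k+1}, \ldots, i_n),\]
adopting the convention $x_{i_0} = e_0$ and $x_{i_{n+1}} = e_{n+1}$ if needed to handle the boundary cases $l=1$ or $k=n$. The key observation is that inside the reversed block, the consecutive squared differences $(x_{i_j}-x_{i_{j+1}})^2$ for $j=l,\ldots,k-1$ simply appear in reverse order in $\hat{R}(I')$, so their sum is unchanged. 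Outside positions $l$ through $k$, $I$ and $I'$ agree, and those terms contribute identically as well. The only terms that actually change are the two ``boundary'' terms flanking the reversed block.

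A direct computation then yields
\[\hat{R}(I') - \hat{R}(I) = (x_{i_{l-1}} - x_{i_k})^2 + (x_{i_l} - x_{i_{k+1}})^2 - (x_{i_{l-1}} - x_{i_l})^2 - (x_{i_k} - x_{i_{k+1}})^2,\]
and expanding the squares this simplifies to
\[\hat{R}(I') - \hat{R}(I) = 2(x_{i_l} - x_{i_k})(x_{i_{l-1}} - x_{i_{k+1}}).\]
By the hypothesis $x_{i_l} > x_{i_k}$ and $x_{i_{l-1}} < x_{i_{k+1}}$, the two factors have opposite signs, so the right-hand side is strictly negative. Hence $\hat{R}(I') < \hat{R}(I)$, and $I$ cannot be a minimizer of $\hat{R}$.

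There is no serious obstacle to executing this plan; the main insight is recognizing that the correct modification is a block reversal rather than a single transposition. This choice is essentially forced by the structure of the hypothesis, which simultaneously involves the two \emph{outer} neighbors $x_{i_{l-1}}, x_{i_{k+1}}$ and the two \emph{endpoints} $x_{i_l}, x_{i_k}$ of the block. Once the reversal is identified, the internal consecutive differences are automatically preserved (being a reversal of the same list), and all that remains is the short algebraic simplification of the four boundary terms carried out above.
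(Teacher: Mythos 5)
Your proposal is correct and is essentially identical to the paper's own proof: both reverse the block of positions $l$ through $k$ and compute that the only change in $\hat{R}$ comes from the two boundary terms, yielding $\hat{R}(I)-\hat{R}(I') = 2(x_{i_l}-x_{i_k})(x_{i_{k+1}}-x_{i_{l-1}})>0$. No gaps.
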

\begin{proof}The diagram for $I$ is
\begin{center}
            \begin{tikzpicture}[description/.style={fill=white,inner sep=2pt}]
                    \matrix (m) [matrix of math nodes, row sep=3em,
                    column sep=2.5em, text height=1.5ex, text depth=0.25ex]
                    { x_{i_1} & x_{i_2} & \cdots & x_{i_l} & \cdots & x_{i_k} & \cdots & x_{i_n} \\};
                    \path[-,font=\scriptsize]
                    (m-1-1) edge node[auto] {} (m-1-2)
                    (m-1-2) edge node[auto] {} (m-1-3)
                    (m-1-3) edge node[auto] {} (m-1-4)
                    (m-1-4) edge node[auto] {} (m-1-5)
                    (m-1-5) edge node[auto] {} (m-1-6)
                    (m-1-6) edge node[auto] {} (m-1-7)
                    (m-1-7) edge node[auto] {} (m-1-8);
            \end{tikzpicture}
\end{center}
We will produce $I' \in S_n$ by ``flipping'' the path from $x_{i_l}$ to $x_{i_k}$ to produce a diagram
\begin{center}
            \begin{tikzpicture}[description/.style={fill=white,inner sep=2pt}]
                    \matrix (m) [matrix of math nodes, row sep=3em,
                    column sep=1.25em, text height=1.5ex, text depth=0.25ex]
                    { x_{i_1} & \cdots & x_{i_{l-1}} & x_{i_k} & x_{i_{k-1}} & \cdots & x_{i_l} & x_{i_{k+1}} & \cdots & x_{i_n} \\};
                    \path[-,font=\scriptsize]
                    (m-1-1) edge node[auto] {} (m-1-2)
                    (m-1-2) edge node[auto] {} (m-1-3)
                    (m-1-3) edge node[auto] {} (m-1-4)
                    (m-1-4) edge node[auto] {} (m-1-5)
                    (m-1-5) edge node[auto] {} (m-1-6)
                    (m-1-6) edge node[auto] {} (m-1-7)
                    (m-1-7) edge node[auto] {} (m-1-8)
                    (m-1-8) edge node[auto] {} (m-1-9)
                    (m-1-9) edge node[auto] {} (m-1-10);
            \end{tikzpicture}
\end{center}
That is,
\[I' = (i_1,\ i_2,\ \cdots,\ i_{l-1},\ i_k,\ i_{k-1},\ \cdots, i_l,\ i_{k+1},\ \cdots,\ i_n)\]
Then
\begin{align*}
\hat{R}(I) - \hat{R}(I') &= (x_{i_k}-x_{i_{k+1}})^2 + (x_{i_{l-1}}-x_{i_l})^2 - (x_{i_{l-1}}-x_{i_k})^2 - (x_{i_l}-x_{i_{k+1}})^2\\
             &= -2x_{i_k}x_{i_{k+1}} - 2x_{i_{l-1}}x_{i_l} + 2x_{i_{l-1}}x_{i_k} + 2x_{i_l}x_{i_{k+1}}\\
             &= 2(x_{i_l}-x_{i_k})(x_{i_{k+1}}-x_{i_{l-1}})\\
             &> 0
\end{align*}
\end{proof}
Now we return to the case of a subgraph $G$ with a disconnected $h$th horizontal slice. Choose $N_1$ and $N_2$ with the smallest possible magnitude so that $f(x,h) = 0$ for all $x \leq N_1$ and $x \geq N_2$. Then order the values of $\{f(x,h)\}_{x=N_1}^{N_2}$ by $f_1 \leq f_2 \leq \cdots \leq f_{2N+1}$. The function $f$ is associated to a permutation $I \in S_{N_1+N_2+1}$ with $f(x,h) = f_{i_{x+N_1+1}}$. By choice of $N_1$ we have $f_{i_1} = 0$ and $f_{i_2} > 0$. Since the $h$th horizontal slice is not connected, and $f$ is only non-zero on $G$, we have some $k$ so that, $k > 2$, $f_{i_k} = 0$, and $f_{i_{k+1}} > 0$. Thus Lemma \ref{strictHorizontalLemm} is applicable and we conclude that $\hat{R}$ does not achieve its minimum value at $I$. The function $f^{\star}$ is associated to some other permutation $J \in S_{N_1+N_2+1}$. In Lemma \ref{horizontalLemm} we proved that the permutation $J$ minimizes $\hat{R}$. Since $I$ is not a minimum value of $\hat{R}$, we must have $H_h(f^{\star}) < H_h(f)$. This concludes the proof Theorem \ref{symmTheoStrict}.

Next we give another class of graphs whose eigenvalues are strictly lowered by symmetrization. First we need some more definitions.
\begin{defi}\label{wall}Let $U$ and $V$ be connected subgraphs whose points all have $y$ coordinates $n$ and $m$ respectively. We say that $U$ ``vertically walls in'' $V$ if $(x,n) \in V$ implies $(x,m) \in U$.
\end{defi}
In the following subgraph, the $0$th horizontal slice vertically walls in the $1$st horizontal slice.
\begin{center}
\begin{tikzpicture}
[interior/.style={circle,draw=black,fill=black, inner sep=0pt,minimum size = 2.5mm},
 boundary/.style={circle,draw=black,fill=black!60, inner sep=0pt,minimum size = 2.5mm},
 exterior/.style={circle,draw=black,fill=white, inner sep=0pt,minimum size = 2.5mm},
 highlight/.style = {circle,draw= red, fill = red, inner sep=0pt, minimum size = 2.5mm}]
\draw[step=.5cm] (-1.49,-1.49) grid (1.49,1.49);
\draw (0,1.5) node {y};
\draw (1.5,0) node {x};
\draw (0,0) node [interior]{};
\draw (-.5,0) node [interior]{};
\draw (.5,0) node [interior]{};
\draw (0,.5) node [interior]{};
\draw (.5,.5) node [interior]{};
\end{tikzpicture}
\end{center}
In this subgraph, neither slice vertically walls in the other.
\begin{center}
\begin{tikzpicture}
[interior/.style={circle,draw=black,fill=black, inner sep=0pt,minimum size = 2.5mm},
 boundary/.style={circle,draw=black,fill=black!60, inner sep=0pt,minimum size = 2.5mm},
 exterior/.style={circle,draw=black,fill=white, inner sep=0pt,minimum size = 2.5mm},
 highlight/.style = {circle,draw= red, fill = red, inner sep=0pt, minimum size = 2.5mm}]
\draw[step=.5cm] (-1.49,-1.49) grid (1.49,1.49);
\draw (0,1.5) node {y};
\draw (1.5,0) node {x};
\draw (0,0) node [interior]{};
\draw (-.5,0) node [interior]{};
\draw (.5,0) node [interior]{};
\draw (0,.5) node [interior]{};
\draw (.5,.5) node [interior]{};
\draw (1,.5) node [interior]{};
\end{tikzpicture}
\end{center}
There are corresponding notions for vertical slices.
Our final horizontal symmetrization theorem is
\begin{theo}\label{symmTheoWall}Let $G$ be a subgraph. If there exists $h \in \ZZ$ such that neither the $h$th horizontal slice of $G$ nor the $h+1$st horizontal slice of $G$ wall in each other, then
\[\lam_D(G^{\star}) < \lam_D(G)\]
\end{theo}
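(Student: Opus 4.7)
The plan is to mirror the proof of Theorem \ref{symmTheoStrict}: fix a principal eigenfunction $f$ of $G$, use the additive decomposition $R_G(f) = \sum_k H_k(f) + \sum_k V_k(f)$ together with the slab-by-slab non-strict inequalities $H_k(f^\star) \le H_k(f)$ and $V_k(f^\star) \le V_k(f)$ already established in the proof of Theorem \ref{symmTheo}, and upgrade a single slab inequality to a strict one. Here the natural choice is $V_h(f^\star) < V_h(f)$, since the hypothesis concerns precisely the interaction between horizontal slices $h$ and $h+1$. Once that is in hand, summing gives $R_{G^\star}(f^\star) < R_G(f) = \lam_D(G)$, so $\lam_D(G^\star) \le R_{G^\star}(f^\star) < \lam_D(G)$.

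To produce the strict inequality at level $h$, I will extract witnesses from the non-walling hypothesis: there exist $j_1, j_2$ with $(j_1, h) \in G$, $(j_1, h+1) \notin G$, and $(j_2, h+1) \in G$, $(j_2, h) \notin G$. Corollary \ref{eig} gives $f(j_1, h) > 0$ and $f(j_2, h+1) > 0$, while the convention that $f$ is extended by $0$ off $G$ gives $f(j_1, h+1) = f(j_2, h) = 0$. The two corresponding summands in $V_h(f)$ are $f(j_1, h)^2$ and $f(j_2, h+1)^2$. Now consider the alternative "pairing" that instead pairs $f(j_1, h)$ with $f(j_2, h+1)$ and the two zeros with each other: those two summands become the single term $(f(j_1, h) - f(j_2, h+1))^2$, an overall change of $-2 f(j_1, h) f(j_2, h+1) < 0$. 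Writing $\tilde{V}$ for this modified sum, this yields $\tilde{V} < V_h(f)$.

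To finish, I will identify $V_h(f^\star)$ as the minimum of $\tilde{R}$ over pairings of the common multi-sets of slice values and apply Lemma \ref{verticalLemm}. Because both slices of $G^\star$ are positively centered horizontal paths, the shorter slice's $x$-coordinates sit inside the longer's, and on each slice the values of $f^\star$ decrease outward from the center; consequently reading off the vertical pairs left-to-right in $j$ (after padding with zero pairs at positions outside the longer slice) matches largest-with-largest, second-largest-with-second-largest, and so on, with the $|n_1 - n_2|$ smallest values of the longer slice paired with zeros. This is exactly the sorted permutation $I^n$ of Lemma \ref{verticalLemm}. Hence $V_h(f^\star) = \tilde{R}(I^n) \le \tilde{R}(J) = \tilde{V} < V_h(f)$, where $J$ is the permutation realizing the modified pairing, and summing slab-by-slab gives the theorem. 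The main obstacle I anticipate is precisely this last bookkeeping step: one must check carefully that the decreasing-from-center assignment of $f^\star$ together with the convention about the one-vertex asymmetry of "positively centered" paths of different lengths really does realize the sorted permutation. Everything else is a direct adaptation of the strict-inequality argument used for Theorem \ref{symmTheoStrict}.
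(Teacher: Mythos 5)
Your proposal is correct and follows essentially the same route as the paper: isolate the strict inequality in $V_h$, find witnesses from the non-walling hypothesis, swap the pairing to drop the value by $2f(j_1,h)f(j_2,h+1)>0$, and then use the fact (already established in the proof that $V_k(f^\star)\le V_k(f)$) that $f^\star$ realizes the sorted minimizer $I^n$ of $\tilde R$. The only cosmetic difference is that you inline the swap computation rather than citing Lemma \ref{strictVerticalLemm} by name; the bookkeeping about positively centered slices that you flag as a worry is exactly the identification $V_k(f^\star)=\tilde R(I^n)$ that the paper already records, so there is no gap.
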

For this we need another combinatorial lemma. We use the notation from Lemma \ref{verticalLemm}.
\begin{lemm}\label{strictVerticalLemm}Suppose $I \in S_n$ and there exists $k$ and $l$ with $x_k > x_l$ and $y_{i_l} > y_{i_k}$. Then $\tilde{R}$ does not achieve its minimum value at $I$.
\end{lemm}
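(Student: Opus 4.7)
The plan is to mimic the ``flipping'' trick of Lemma \ref{strictHorizontalLemm}, but here it simplifies to a single transposition of two values because the vertical Rayleigh quotient is a sum of independent terms (one per column) rather than a chain of consecutive differences. The hypothesis $x_k > x_l$ and $y_{i_l} > y_{i_k}$ says that, in the diagram for $I$, the large $x$-value $x_k$ is paired with the small $y$-value $y_{i_k}$ while the small $x$-value $x_l$ is paired with the large $y$-value $y_{i_l}$. This is precisely the ``anti-sorted'' configuration one expects to improve by swapping.

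Concretely, I would define $I' \in S_n$ to agree with $I$ everywhere except that the entries at positions $k$ and $l$ are exchanged; that is, $I'$ pairs $x_k$ with $y_{i_l}$ and $x_l$ with $y_{i_k}$, while leaving every other column of the diagram unchanged. Since $\tilde{R}$ is a sum of per-column squared differences, all terms except those at positions $k$ and $l$ cancel in the difference $\tilde{R}(I) - \tilde{R}(I')$, leaving
\[
\tilde{R}(I) - \tilde{R}(I') = (x_k - y_{i_k})^2 + (x_l - y_{i_l})^2 - (x_k - y_{i_l})^2 - (x_l - y_{i_k})^2.
\]
Expanding and simplifying (as was done at the end of the proof of Lemma \ref{verticalLemm}) collapses this to $2(x_k - x_l)(y_{i_l} - y_{i_k})$, which is strictly positive by hypothesis. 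Hence $\tilde{R}(I') < \tilde{R}(I)$, and $I$ cannot be a minimizer.

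There is really no main obstacle here; the lemma is a quantitative version of the standard ``rearrangement inequality'' observation that an inversion between two sorted sequences strictly increases the sum of squared differences, and the computation is a two-line algebraic identity. The only thing to be careful about is the bookkeeping: checking that the swap genuinely touches only columns $k$ and $l$ in the sum $\sum_j (x_j - y_{i_j})^2$, which is clear because $\tilde{R}$, unlike $\hat{R}$ in Lemma \ref{strictHorizontalLemm}, has no terms coupling adjacent columns.
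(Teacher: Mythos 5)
Your proposal is correct and is essentially identical to the paper's proof: the paper also defines the new permutation by transposing the entries at positions $k$ and $l$, notes that only those two terms of the sum change, and computes $\tilde{R}(I)-\tilde{R}(J)=2(x_k-x_l)(y_{i_l}-y_{i_k})>0$.
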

\begin{proof}Without loss of generality let $k < l$. Then define a permutation
\[J = (i_1,\ \cdots,\ i_{k-1},\ i_l,\ i_{k+1},\ \cdots,\ i_{l-1},\ i_k,\ i_{l+1},\ \cdots,\ i_n)\]
Then we have
\begin{align*}
\tilde{R}(I) - \tilde{R}(J) &= (x_k-y_{i_k})^2 + (x_l - y_{i_l})^2 - (x_k-y_{i_l})^2 - (x_l-y_{i_k})^2\\
            &= -2x_ky_{i_k} - 2x_ly_{i_l} + 2x_ky_{i_l} + 2x_ly_{i_k}\\
            &= 2(x_k-x_l)(y_{i_l}-y_{i_k})\\
            &> 0
\end{align*}
\end{proof}
\begin{proof}(Theorem \ref{symmTheoWall})
Suppose that neither $G^k$ nor $G^{k-1}$ wall in each other. Let $f$ be a normalized eigenfunction for $G$. We will show that $V_k(f^{\star}) < V_k(f)$. We have
\[V_k(f) = \sum_{j=-\infty}^{\infty}(f(j,k+1)-f(j,k))^2\]
Recall that $f$ is non-zero at a point if and only if the point lies in $G$. Since neither $G^k$ or $G^{k-1}$ wall in each other, we can find $i$ and $j$ so that $f(i,k+1) = 0$, $f(i,k) > 0$, $f(j,k+1) > 0$, and $f(j,k) = 0$. Now Lemma \ref{strictVerticalLemm} applies and the Theorem immediately follows.
\end{proof}

We can also define negative horizontal symmetrization, positive vertical symmetrization, and negative vertical symmetrization. These are completely analogous to positive horizontal symmetrization, and there are corresponding versions of Theorem \ref{symmTheo}, Theorem \ref{symmTheoStrict}, and Theorem \ref{symmTheoWall}.

\subsection{Diagonal Symmetrization}
For diagonal Symmetrization we will use ``diagonal slices'' of our subgraph instead of horizontal or vertical slices. The diagonal slices do not interact as nicely with $\ZZ^2$ and thus the combinatorics involved are a little more subtle.
\begin{defi}Let $G$ be a subgraph. For $h \in \ZZ$, the $h$th diagonal slice of $G$ consist of all points in $G \cap \{y = x + h\}$.
\end{defi}
In the following subgraph we have highlighted the $3$rd diagonal slice.
\begin{center}
    \begin{tikzpicture}
        [interior/.style={circle,draw=black,fill=black, inner sep=0pt,minimum size = 2.5mm},
        boundary/.style={circle,draw=black,fill=black!60, inner sep=0pt,minimum size = 2.5mm},
        exterior/.style={circle,draw=black,fill=white, inner sep=0pt,minimum size = 2.5mm},
        highlight/.style = {circle,draw= red, fill = red, inner sep=0pt, minimum size = 2.5mm}]
        \draw[step=.5cm] (-2.99,-2.99) grid (2.99,2.99);
        \draw (0,3) node {y};
        \draw (3,0) node {x};
        \draw (-.5,-.5) node [interior]{};
        \draw (0,-.5) node [interior]{};
        \draw (.5,-.5) node [interior]{};
        \draw (-.5,0) node [interior]{};
        \draw (0,0) node [interior]{};
        \draw (.5,0) node [interior]{};
        \draw (-1,.5) node [highlight]{};
        \draw (-.5,.5) node [interior]{};
        \draw (0,.5) node [interior]{};
        \draw (.5,.5) node [interior]{};
        \draw (0,1) node [interior]{};
        \draw (.5,1) node [interior]{};
        \draw (0,1.5) node [highlight]{};
        \draw (.5,1.5) node [interior]{};
        \draw (0,2) node [interior]{};
        \draw (0.5,2) node [highlight]{};
    \end{tikzpicture}
\end{center}
Diagonal symmetrization will center each of these slices. Consider the line $y = -x$ which cuts $\ZZ^2$ in half.
\begin{center}
    \begin{tikzpicture}
        [interior/.style={circle,draw=black,fill=black, inner sep=0pt,minimum size = 2.5mm},
        boundary/.style={circle,draw=black,fill=black!60, inner sep=0pt,minimum size = 2.5mm},
        exterior/.style={circle,draw=black,fill=white, inner sep=0pt,minimum size = 2.5mm},
        highlight/.style = {circle,draw= red, fill = red, inner sep=0pt, minimum size = 2.5mm}]
        \draw[step=.5cm] (-2.99,-2.99) grid (2.99,2.99);
        \draw (0,3) node {y};
        \draw (3,0) node {x};
        \draw (-.5,-.5) node [interior]{};
        \draw (0,-.5) node [interior]{};
        \draw (.5,-.5) node [interior]{};
        \draw (-.5,0) node [interior]{};
        \draw (0,0) node [interior]{};
        \draw (.5,0) node [interior]{};
        \draw (-1,.5) node [highlight]{};
        \draw (-.5,.5) node [interior]{};
        \draw (0,.5) node [interior]{};
        \draw (.5,.5) node [interior]{};
        \draw (0,1) node [interior]{};
        \draw (.5,1) node [interior]{};
        \draw (0,1.5) node [highlight]{};
        \draw (.5,1.5) node [interior]{};
        \draw (0,2) node [interior]{};
        \draw (0.5,2) node [highlight]{};
        \draw [color = red] (2.99,-2.99) -- (-2.99,2.99);
    \end{tikzpicture}
\end{center}
To diagonally symmetrize our graph, we  replace each diagonal slice with a new diagonal slice as symmetric as possible with respect to $y = -x$. If we have an extra point, then we put it on the right. We spare the reader a formal definition of this. The diagonal symmetrization of the above subgraph is
\begin{center}
    \begin{tikzpicture}
        [interior/.style={circle,draw=black,fill=black, inner sep=0pt,minimum size = 2.5mm},
        boundary/.style={circle,draw=black,fill=black!60, inner sep=0pt,minimum size = 2.5mm},
        exterior/.style={circle,draw=black,fill=white, inner sep=0pt,minimum size = 2.5mm},
        highlight/.style = {circle,draw= red, fill = red, inner sep=0pt, minimum size = 2.5mm}]
        \draw[step=.5cm] (-2.99,-2.99) grid (2.99,2.99);
        \draw (0,3) node {y};
        \draw (3,0) node {x};
        \draw (-.5,-.5) node [interior]{};
        \draw (0,-.5) node [interior]{};
        \draw (.5,-.5) node [interior]{};
        \draw (-1,0) node [interior]{};
        \draw (-.5,0) node [interior]{};
        \draw (0,0) node [interior]{};
        \draw (0,1.5) node [interior]{};
        \draw (.5,0) node [interior]{};
        \draw (.5,1) node [interior]{};
        \draw (-1,.5) node [interior]{};
        \draw (-.5,.5) node [interior]{};
        \draw (0,.5) node [interior]{};
        \draw (.5,.5) node [interior]{};
        \draw (-1,1) node [interior]{};
        \draw (-.5,1) node [interior]{};
        \draw (0,1) node [interior]{};
        \draw [color = red] (2.99,-2.99) -- (-2.99,2.99);
    \end{tikzpicture}
\end{center}
We denote the diagonal symmetrization of a subgraph $G$ by $G^{\dagger}$. For any function on $G$, we also get a function $f^{\dagger}$ on $G^{\dagger}$ by mimicking the definition of $f^{\star}$. For each $k$, let $x^{(k)}_1 \geq x^{(k)}_2 \geq \cdots \geq x^{(k)}_n$ be the values of $f$ along the $k$th diagonal slice. Now order the points on the $k$th diagonal slice of $G^{\dagger}$ by their distance from $y = -x$. If two points have the same distance, then the one on the right goes first.  If $z_1 \geq z_2 \geq \cdots \geq z_n$ is the listing of points on the $k$th diagonal slice of $G^{\dagger}$, define $f^{\dagger}(z_j) = x^{(k)}_j$.

As with $G^{\star}$ we have
\begin{theo}\label{diagTheo}
\[\lam_D(G^{\dagger}) \leq \lam_D(G)\]
\end{theo}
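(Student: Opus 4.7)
The strategy is to mirror the proof of Theorem \ref{symmTheo}. First I would reduce to showing $R_{G^{\dagger}}(f^{\dagger}) \leq R_G(f)$ where $f$ is a principal eigenfunction of $G$ (extended by zero outside $G$). Since $f^{\dagger}$ is obtained by permuting the values of $f$ slice-by-slice, the denominators $\sum f^2 = \sum (f^{\dagger})^2$ agree automatically, so only the numerator of the Rayleigh quotient matters.

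Unlike in horizontal symmetrization, no edge of $\ZZ^2$ lies within a single diagonal slice: a vertical edge $(n,m)\sim(n,m+1)$ joins slice $m-n$ to slice $m-n+1$, and a horizontal edge $(n,m)\sim(n+1,m)$ joins slice $m-n$ to slice $m-n-1$. Writing
\[
E_h(f) \equiv \sum\left\{(f(u)-f(v))^2 : u \in \text{slice } h,\ v \in \text{slice } h+1,\ u\sim v\right\},
\]
one obtains $R_G(f)\sum_x f(x)^2 = \sum_h E_h(f)$, and analogously for $f^{\dagger}$. Thus Theorem \ref{diagTheo} follows once $E_h(f^{\dagger}) \leq E_h(f)$ is established for every $h$.

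Indexing the points of slice $h$ by their $x$-coordinate $n$, the neighbors of $(n,n+h)$ in slice $h+1$ are $(n,n+h+1)$ (index $n$) and $(n-1,n+h)$ (index $n-1$). So the bipartite adjacency between slices $h$ and $h+1$ has a ``ladder with one-step offset'' structure, and in $G^{\dagger}$ each slice occupies a consecutive interval of $x$-coordinates centered (with right-bias) on the intersection of $y=x+h$ with $y=-x$. The core combinatorial claim I would prove is: given the sorted lists of values on slices $h$ and $h+1$, the placement induced by the $f^{\dagger}$ construction minimizes the sum of squared differences across these ladder edges. Following the pattern of Lemmas \ref{horizontalLemm} and \ref{verticalLemm}, I expect this to reduce to repeated application of the elementary identity $(x-a)^2 + (y-b)^2 - (x-b)^2 - (y-a)^2 = 2(x-y)(b-a)$, which lets one swap any ``inversion'' between the two slices so as to strictly decrease the sum.

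The main obstacle will be bookkeeping around slices of unequal length and differing parity. In $G^{\dagger}$, slices $h$ and $h+1$ generally have different sizes and their $x$-intervals are shifted relative to each other, so the outer rungs of the ladder are ``half-edges'' whose other endpoint is the zero extension on $\pa G^{\dagger}$. I would handle this by padding both slices out to a common long interval with phantom zero-valued vertices (lying in $\pa G$ or beyond), reducing to a uniform ladder in which every slice-$h$ vertex has exactly two slice-$(h+1)$ neighbors; a two-stage inductive exchange argument would then first correct the relative alignment between the two slices (analog of Lemma \ref{verticalLemm}) and then the monotone arrangement within each slice (analog of Lemma \ref{horizontalLemm}).
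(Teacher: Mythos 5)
Your proposal follows the paper's proof essentially verbatim: the paper likewise decomposes the Rayleigh quotient numerator into contributions $D_k$ from pairs of adjacent diagonal slices (which form exactly the zero-padded zig-zag ladder you describe) and proves by exchange arguments that the centered arrangement minimizes each such contribution (Lemma \ref{diagLemm}). The one piece of ``bookkeeping'' that is genuinely delicate is that the exchanges cannot act on one slice at a time --- since each slice-$h$ value sits between two slice-$(h+1)$ values in the zig-zag, the paper's moves are block reversals applied simultaneously to both permutations with offset indices (its four switch lemmas), rather than a clean two-stage alignment-then-sorting procedure --- but the identity you cite is precisely the engine of all four of those lemmas.
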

We will reuse ideas from the previous section. First we will break up the Rayleigh quotient into terms involving adjacent diagonal slices. Then we will use purely combinatorial methods to show the inequality on each term.
\begin{defi}Let $f: \ZZ^2 \to \RR$ be any function non-zero on finitely many vertices. Then, for $k \in \ZZ$ we define
\[D_k(f) = \sum_{j=-\infty}^{\infty}(f(j,k+j) - f(j+1,k+j))^2 + (f(j,k+j) - f(j,k+j-1))^2\]
We call this the ``diagonal Rayleigh quotient.''
\end{defi}
If $f$ is a normalized eigenfunction for $G$, then after extending $f$ to be $0$ anywhere it is not defined, we have
\[R(f) = \sum_{k=-\infty}^{\infty}D_k(f)\]
Hence, to prove Theorem \ref{diagTheo} we just need to establish that $D_k(f^{\dagger}) \leq D_k(f)$ for all $k$. Now we will recast this into a purely combinatorial question.

Suppose we have collections of non-negative real numbers $x_0 \leq x_1 \leq x_2 \leq \cdots \leq x_n$ and $y_0 \leq y_1 \leq y_2 \leq \cdots \leq y_n$. We refer to $x_0$ and $y_0$ as the endpoints. We associate each pair $(I,J) \in S_n\times S_n$ with the following diagram
\begin{center}
    \begin{tikzpicture}[description/.style={fill=white,inner sep=2pt}]
                    \matrix (m) [matrix of math nodes, row sep=3em,
                    column sep=1.5em, text height=1.5ex, text depth=0.25ex]
                    {y_0 &         &  y_{j_1} &         & y_{j_2} &         & \cdots &         & y_{j_n} & \\
                       & x_{i_1} &          & x_{i_2} &         & x_{i_3} &\cdots  & x_{i_n} &         & x_0\\};
                    \path[-,font=\scriptsize]
                    (m-1-1) edge node[auto] {} (m-2-2)
                    (m-2-2) edge node[auto] {} (m-1-3)
                    (m-1-3) edge node[auto] {} (m-2-4)
                    (m-2-4) edge node[auto] {} (m-1-5)
                    (m-1-5) edge node[auto] {} (m-2-6)
                    (m-1-1) edge node[auto] {} (m-2-2)
                    (m-2-8) edge node[auto] {} (m-1-9)
                    (m-1-9) edge node[auto] {} (m-2-10);
    \end{tikzpicture}
\end{center}
Now we define a function $\overline{R}:S_n\times S_n \to \RR$ by taking the numerator of the Rayleigh quotient of the above graph. That is,
\[(I,J) \mapsto (x_{i_1}-y_0)^2 + (x_{i_1}-y_{j_1})^2 + (y_{j_1}-x_{i_2})^2 + \cdots + (y_{j_n}-x_{i_n})^2 + (y_{j_n}-x_0)^2\]
We are interested in minimizing $\overline{R}$.
\begin{lemm}\label{diagLemm}$\overline{R}$ achieves its minimum value at $(I,J)$ where $I$ is defined by
\[i_1 = 1,\ i_n = 2,\ i_2 = 3,\ \cdots\]
and $J$ is defined by
\[j_n = 1,\ j_1 = 2,\ j_{n-1} = 3,\ \cdots\]
\end{lemm}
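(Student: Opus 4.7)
The plan is to adapt the induction-with-swap strategy of Lemma \ref{horizontalLemm} to this two-permutation zigzag problem. First I would establish the local swap identity: exchanging $i_k$ and $i_{k+1}$ in $I$ (with $J$ held fixed) preserves the middle contribution $(x_{i_k}-y_{j_k})^2 + (y_{j_k}-x_{i_{k+1}})^2$ because $y_{j_k}$ is symmetric between the two, so the net change in $\overline{R}$ is concentrated in the outer terms and equals
\[\Delta = 2(x_{i_k}-x_{i_{k+1}})(L-R),\]
where $L$ is the $y$-neighbor immediately to the left of position $k$ (with the convention $L = y_0$ when $k=1$) and $R$ is the $y$-neighbor immediately to the right of position $k+1$ (with $R = x_0$ when $k+1 = n$). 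A symmetric identity holds for adjacent swaps in $J$. These identities force any minimizer to be ``co-sorted'' with its neighbor structure, in the spirit of Lemmas \ref{strictHorizontalLemm} and \ref{strictVerticalLemm}, and a direct check shows that the claimed $(I^\ast, J^\ast)$ satisfies all such conditions.

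Second, I would induct on $n$. The base case $n=1$ is an immediate expansion of both candidate configurations. For the inductive step, note that the zigzag has a left-right symmetry: reading it from right to left interchanges the roles of the two endpoints $y_0$ and $x_0$ and of the permutations $I$ and $J$. Using this symmetry, I would reduce to the case where $x_n \geq y_n$, so that $x_n$ is the overall maximum among the values in play. The candidate $(I^\ast, J^\ast)$ places $x_n$ at the central $x$-position, between two of the largest $y$-values; extracting the four terms adjacent to $x_n$ in $\overline{R}(I^\ast, J^\ast)$ exhibits the remainder as an instance of $\overline{R}$ of size $n-1$ on the data $x_1 \leq \cdots \leq x_{n-1}$ and $y_1 \leq \cdots \leq y_{n-1}$, with appropriately modified endpoints, to which the inductive hypothesis applies.

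Third, for an arbitrary $(I, J)$ the same extraction, carried out at whatever position $x_n$ happens to occupy, yields
\[\overline{R}(I, J) = \overline{R}(I', J') + (x_n - a)^2 + (x_n - b)^2 - (a - b)^2,\]
for some reduced configuration $(I', J')$ and some neighbor values $a, b$ (which are $y$-values or, at the boundary, $y_0$ or $x_0$). Since $x_n$ is the overall maximum, $a, b \leq x_n$, so the elementary identity $(x_n-a)^2 + (x_n-b)^2 - (a-b)^2 = 2(x_n-a)(x_n-b) \geq 0$ absorbs any sub-optimality in the placement of $x_n$, exactly mirroring the concluding calculation in Lemma \ref{horizontalLemm}. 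Combining this nonnegativity with the inductive bound on $\overline{R}(I', J')$ yields $\overline{R}(I, J) \geq \overline{R}(I^\ast, J^\ast)$, completing the induction.

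The main obstacle is the zigzag bookkeeping when extracting $x_n$: deleting a single $x$-value leaves two $y$'s adjacent and breaks the alternating pattern, so the ``reduced'' configuration $(I', J')$ has to be reinterpreted, either by splicing those two flanking $y$'s into a new effective endpoint or by extracting an adjacent $x$-$y$ pair simultaneously. I expect this splicing step to be the most delicate part, requiring case analysis on the parity of $n$ and on whether $x_n$ sits adjacent to one of the fixed endpoints $y_0$ or $x_0$. An alternative route that may simplify the bookkeeping is to forgo induction entirely and prove the lemma by a direct exchange argument: show that any configuration violating the co-sortedness conditions produced in the first step can be improved by a single swap, then verify that $(I^\ast, J^\ast)$ is the unique configuration satisfying all of them.
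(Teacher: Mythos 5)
Your proposal takes a genuinely different route from the paper, but it contains a gap at exactly the point you flag as ``the most delicate part,'' and that gap is the crux of the problem rather than a bookkeeping detail. The paper does not induct by extracting the maximum; instead it introduces segment-reversal ``switch operators'' $S_{(l,m)}$ applied \emph{simultaneously} to both permutations, proves four symmetric two-line lemmas showing that under suitable order conditions such a paired reversal does not increase $\overline{R}$ (the change is $2(y_{k_m}-y_{k_{l-1}})(x_{h_l}-x_{h_m})$, since only the two boundary terms of the reversed block are affected), and then greedily sorts an arbitrary $(H,K)$ into the target $(I,J)$ one index at a time, never increasing $\overline{R}$. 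This sidesteps the reduction problem entirely.

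The gap in your approach: when you delete $x_n$ from the zigzag $y_0 - x_{i_1} - y_{j_1} - \cdots - x_{i_n} - y_{j_n} - x_0$, its two flanking $y$-values become adjacent, and the resulting path has $n$ interior $y$'s but only $n-1$ interior $x$'s. This is \emph{not} an instance of $\overline{R}$ on $S_{n-1}\times S_{n-1}$, so the inductive hypothesis does not apply to $(I',J')$ as written. In Lemma \ref{horizontalLemm} this works because deleting one value from a single path leaves a path of the same type; here the alternating two-row structure is destroyed. To repair it you would need to extract an adjacent $x$--$y$ pair, but in an arbitrary configuration $x_n$ and $y_n$ need not be adjacent, and forcing them adjacent first is essentially the whole difficulty. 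Your fallback (a pure exchange argument via adjacent transpositions in one permutation at a time) is also insufficient as stated: showing that $(I^\ast,J^\ast)$ is the unique configuration admitting no improving adjacent swap proves only local minimality; to conclude global minimality you must exhibit, from every other configuration, a non-increasing sequence of moves reaching $(I^\ast,J^\ast)$, and the conditions under which a single adjacent swap is non-increasing are too restrictive to guarantee this. The paper's paired segment reversals are precisely the stronger moves that make such a monotone sorting path exist.
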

\begin{proof}
Suppose we have $(H,K) \in S_n \times S_n$ with $H = (h_1,\ \cdots,\ h_n)$ and $K = (k_1,\ \cdots,\ k_n)$. For any pair $(l,m)$ of positive integers less than or equal to $n$, we have a ``switch operator'' $S_{(l,m)}: S_n \to S_n$ defined by
\[(i_1,\ \cdots,\ i_n) \mapsto (i_1,\ \cdots,\ i_{l-1},\ i_m,\ i_{m-1},\ \cdots,\ i_l,\ i_{m+1},\ \cdots,\ i_n)\]
The relevant property about this switch operator is
\begin{lemm}\label{switchLemm1}Suppose that $l \leq m$, $x_{h_m} \leq x_{h_l}$, and $y_{k_{l-1}} \leq y_{k_{m}}$. Then $\overline{R}(S_{(l-1,m+1)}(H),S_{(1-1,m)}(K)) \leq \overline{R}(H,K)$
\end{lemm}
\begin{proof}This is a direct calculation
\begin{align*}
\overline{R}(H,K) - \overline{R}(S_{(l,m)}(H),S_{(l,m-1)}(K)) &= (y_{k_{l-1}} - x_{h_l})^2 + (y_{k_{m}}-x_{h_m})^2\\
                                        &\ - (y_{k_{l-1}} - x_{h_m})^2 - (y_{k_{m}}-x_{h_l})^2\\
                                        &= -2y_{k_{l-1}}x_{h_l} - 2y_{k_{m}}x_{h_m}\\
                                        &\ + 2y_{k_{l-1}}x_{h_m} + 2y_{k_{m}}x_{h_l}\\
                                        &= 2(y_{k_{m}} - y_{k_{l-1}})(x_{h_l}-x_{h_m})\\
                                        &\geq 0
\end{align*}
\end{proof}
The roles of $l$, $m$, $H$, and $K$ are all symmetric. We can permute their roles around to get the following three lemmas.
\begin{lemm}\label{switchLemm2}Suppose that $l \leq m$, $x_{h_l} \leq x_{h_m}$, and $y_{k_{m}} \leq y_{k_{l-1}}$. Then $\overline{R}(S_{(l-1,m+1)}(H),S_{(l-1,m)}(K)) \leq \overline{R}(H,K)$
\end{lemm}
\begin{lemm}\label{switchLemm3}Suppose that $l \leq m$, $y_{k_m} \leq y_{k_l}$, and $x_{h_{l}} \leq x_{h_{m+1}}$. Then $\overline{R}(S_{(l,m+1)}(H),S_{(l-1,m+1)}(K)) \leq \overline{R}(H,K)$
\end{lemm}
\begin{lemm}\label{switchLemm4}Suppose that $l \leq m$, $y_{k_l} \leq y_{k_m}$, and $x_{h_{m+1}} \leq x_{h_{l-1}}$. Then $\overline{R}(S_{(l,m+1)}(H),S_{(l-1,m+1)}(K)) \leq \overline{R}(H,K)$
\end{lemm}
The proofs of these statements are all essentially the same. Now we return to problem of minimizing $\overline{R}$. Start with $(H,K) \in S_n\times S_n$. We will keep applying switch operators with the help of the above lemmas to produce a sequence $\left\{(H^{(j)},K^{(j)})\right\}_{j=0}^N$. Set
\[H^{(j)} \equiv (h^{(j)}_1,\ \cdots,\ h^{(j)}_n)\]
and
\[K^{(j)} \equiv (k^{(j)}_1,\ \cdots,\ k^{(j)}_n)\]
The sequence $\left\{H^{(j)},K^{(j)}\right\}$ will have the following properties
\begin{enumerate}
\item $\overline{R}\left(H^{(j+1)},K^{(j+1)}\right) \leq \overline{R}\left(H^{(j)},K^{(j)}\right)$
\item $h^{(1)}_1 = 1$, $h^{(2)}_1 = 1$, $k^{(2)}_n = 1$, $h_1^{(3)} = 1$, $k_n^{(3)} = 1$, $h_n^{(3)} = 2$, $h_1^{(4)} = 1$, $k_n^{(4)} = 1$, $h_n^{(4)} = 2$, $k_1^{(4)} = 2$, etc. That is, each $\left(H^{(j)},K^{(j)}\right)$ agrees with $(I,J)$ on one more index until $\left(H^{(N)},K^{(N)}\right) = (I,J)$.
\end{enumerate}
The construction of this sequence will finish the proof of Lemma \ref{diagLemm}.

The construction of the sequence is inductive. However, writing out the induction formally is a pain since depending on the index, a different one of the above lemmas is required for the inductive step. So we will construct the first few terms of the sequence, and it should then be clear to the reader how to continue. Set $\left(H^{(0)},K^{(0)}\right) \equiv (H,K)$. If $x_{h_1} \leq x_{h_l}$ for all $l$ then we must have $x_{h_1} = x_1$. After a relabeling of $H^{(0)}$ we may take $h_1 = 1$. Now suppose that there exists $l$ such that $x_{h_l} < x_{h_1}$. By assumption we have $y_0 \leq y_{k_l}$. Hence we can apply Lemma \ref{switchLemm1} to produce $(H^{(1)},K^{(1)}) \in S_n \times S_n$ such that $\overline{R}(H^{(1)},K^{(1)}) \leq \overline{R}(H^{(0)},K^{(0)})$ and $h^{(1)}_1 = 1$. In either case we now have $(H^{(1)},K^{(1)})$ such that $h^{(1)}_1 = 1$ and $\overline{R}(H^{(1)},K^{(1)}) \leq \overline{R}(H^{(0)},K^{(0)})$. Next we can apply the same argument using Lemma \ref{switchLemm4} to produce $(H^{(2)},K^{(2)})$ where $h^{(2)}_1 = 1$ and $k^{(2)}_n = 1$. Next, If $x_{h^{(2)}_{n}} = x_2$ then we can relabel $H^{(2)}$ so that $h^{(2)}_n = 2$ and set $(H^{(3)},K^{(3)}) = (H^{(2)},K^{(2)})$. Otherwise, suppose we have some $l\geq 2$ such that $x_{h^{(2)}_l} < x_{h^{(2)}_n}$. By construction of $K^{(2)}$, we have $y_{k^{(2)}_n} \leq y_{k^{(2)}_l}$. Hence we can apply Lemma \ref{switchLemm2} and produce $(H^{(3)},K^{(3)})$ such that $h^{(3)}_n = 2$, $h^{(3)}_1 = 1$, $k^{(3)}_n = 1$, and $\overline{R}(H^{(3)},K^{(3)}) \leq \overline{R}(H^{(2)},K^{(2)})$. The form of the induction should now be clear.
\end{proof}
To show that $D_k(f^{\dagger}) \leq D_k(f)$ we will mimic the corresponding step in the proof of \ref{symmTheo}. That is, we note that the values of $f^{\dagger}$ are just a permutation along the diagonal slices of the values of $f$. The permutation corresponding to $f^{\dagger}$ is exactly the minimizing one of Lemma \ref{diagLemm}. This should be immediately clear once we write out an example: Choose some $k \in \ZZ$ and suppose that $f_1^{(k)} \leq f_2^{(k)} \leq \cdots \leq f_5^{(k)}$ and $f_1^{(k-1)} \leq f_2^{(k-1)} \leq \cdots \leq f_5^{(k-1)}$ be the values of $f$ along the $k$th slice and the $k-1$st slice respectively. Furthermore, suppose that $k$ is odd (otherwise the picture is flipped). Then the values of $f^{\dagger}$ along the $k$ and $k-1$st slice in $G^{\dagger}$ will be
\begin{center}
    \begin{tikzpicture}[description/.style={fill=white,inner sep=2pt}]
                    \matrix (m) [matrix of math nodes, row sep=3em,
                    column sep=.3em, text height=1.5ex, text depth=0.25ex]
                    { 0 &             & f^{(k)}_2&            & f^{(k)}_4&             & f^{(k)}_5 &             & f^{(k)}_3 &              & f^{(k)}_1  &  \\
                        &  f^{(k-1)}_1&          &f^{(k-1)}_3 &          & f^{(k-1)}_5 &           & f^{(k-1)}_4 &           & f^{(k-1)}_2  &   &  0 \\};
                    \path[-,font=\scriptsize]
                    (m-1-1) edge node[auto] {} (m-2-2)
                    (m-2-2) edge node[auto] {} (m-1-3)
                    (m-1-3) edge node[auto] {} (m-2-4)
                    (m-2-4) edge node[auto] {} (m-1-5)
                    (m-1-5) edge node[auto] {} (m-2-6)
                    (m-1-1) edge node[auto] {} (m-2-2)
                    (m-2-8) edge node[auto] {} (m-1-9)
                    (m-1-9) edge node[auto] {} (m-2-10)
                    (m-2-6) edge node[auto] {} (m-1-7)
                    (m-1-7) edge node[auto] {} (m-2-8)
                    (m-2-10) edge node[auto]{} (m-1-11)
                    (m-1-11) edge node[auto] {}(m-2-12);
    \end{tikzpicture}
\end{center}
This concludes the proof of Theorem \ref{diagTheo}.
\section{The Geometry of Minimizing Subgraphs}
\begin{defi}We say that a subgraph $G$ is strongly connected if
\begin{enumerate}
\item $(x,y_1)$ and $(x,y_2)$ in $G$ imply that $(x,y)$ lies in $G$ for all integers $y \in [y_1,y_2]$.
\item $(x_1,y)$ and $(x_2,y)$ in $G$ imply that $(x,y)$ lies in $G$ for all integers $x \in [x_1,x_2]$.
\end{enumerate}
\end{defi}
\begin{prop}\label{stronglyConnected}If $G$ is a minimizing subgraph then $G$ is strongly connected.
\end{prop}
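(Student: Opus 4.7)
The plan is to derive strong connectedness directly from the strict symmetrization theorem (Theorem \ref{symmTheoStrict}) together with its vertical analogue. Recall that ``strongly connected'' is precisely the statement that every horizontal slice $G^h$ and every vertical slice is a (possibly empty) interval of lattice points, i.e.\ connected as a subgraph. Since we already know from the earlier proposition that every minimizing subgraph is connected, Theorem \ref{symmTheoStrict} is applicable to $G$.

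I would argue by contradiction. Suppose $G$ is a minimizing subgraph on $n$ vertices but $G$ is not strongly connected. Then either some horizontal slice $G^h$ fails to be connected, or some vertical slice does. In the first case, $G^h$ is non-empty (otherwise it is trivially connected) and disconnected, so Theorem \ref{symmTheoStrict} applies and yields $\lam_D(G^\star) < \lam_D(G)$. Since horizontal symmetrization preserves the number of vertices, $|G^\star| = |G| = n$, and therefore
\[\lam_D^{(n)} \leq \lam_D(G^\star) < \lam_D(G) = \lam_D^{(n)},\]
which is absurd. In the second case we apply instead the vertical analogue of Theorem \ref{symmTheoStrict} (noted at the end of the subsection) to a disconnected vertical slice, obtaining the same contradiction with the vertical symmetrization $G^\star$ in place of the horizontal one.

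Hence no slice of a minimizing subgraph can be non-empty and disconnected, so every slice is an interval of consecutive lattice points. This is exactly the strong connectedness condition, completing the proof.

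The only subtle point is verifying that the hypotheses of Theorem \ref{symmTheoStrict} are genuinely available here, namely that $G$ itself is connected; this was established in the ``warm-up'' proposition on connectedness of minimizers, so the argument is essentially a one-step reduction to the symmetrization results already proved. I do not foresee a significant obstacle.
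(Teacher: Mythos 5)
Your argument is correct and is exactly the approach of the paper, which cites Theorem \ref{symmTheoStrict} and its vertical analogue and calls the rest immediate; you have merely filled in the short contradiction argument and the check that $G$ is connected, both of which the paper leaves implicit.
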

\begin{proof}This follows immediately from Theorem \ref{symmTheoStrict} and the corresponding version for vertical symmetrization.
\end{proof}
\begin{defi}We say that a subgraph is ``walled-in'' if it is strongly connected and
\begin{enumerate}
    \item There exists some $h \in \ZZ$ so that the $h$th horizontal slice walls in every other horizontal slice (see Definition \ref{wall})
    \item There exists some $k \in \ZZ$ so that the $k$th vertical slice walls in every other vertical slices
\end{enumerate}
\end{defi}
\begin{prop}\label{walled-in}If $G$ is a minimizing subgraph then it must be walled-in.
\end{prop}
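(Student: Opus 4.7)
The plan is to combine adjacent-slice nesting with an auxiliary symmetrization argument to show that the sequence of horizontal slice lengths is unimodal, after which the walled-in property follows by transitivity. By Proposition \ref{stronglyConnected}, $G$ is strongly connected, so each non-empty horizontal slice is a contiguous interval $R_h = [a_h,b_h]\times\{h\}$ of length $L_h := b_h - a_h + 1$. Since $G$ is minimizing, Theorem \ref{symmTheoWall} forces that for every $h$ one of $R_h, R_{h+1}$ walls in the other, so the shorter of any two adjacent slices is contained in the longer.

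I claim the sequence $\{L_h\}$ is unimodal. Granting this, let $h^*$ be an index at which $L_h$ attains its maximum. For $h > h^*$ the lengths are non-increasing, so adjacent nesting gives $R_{h+1} \subseteq R_h$; iterating yields $R_h \subseteq R_{h^*}$. The analogous argument for $h < h^*$ then shows $R_{h^*}$ walls in every horizontal slice, establishing the first clause of the walled-in condition.

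The main step is unimodality, which I establish by contradiction. Suppose there is an interior $h$ with $L_{h-1} > L_h < L_{h+1}$. By Theorem \ref{symmTheo}, the positive horizontal symmetrization $G^\star$ is again a minimizing subgraph. Its rows are positively centered intervals, and the containment ordering of positively centered intervals agrees with the length ordering; in particular, both $R^\star_{h-1}$ and $R^\star_{h+1}$ strictly contain $R^\star_h$, and their intersection is itself a centered interval of length $\min(L_{h-1},L_{h+1}) > L_h$ that also strictly contains $R^\star_h$. Pick any $x_0 \in (R^\star_{h-1} \cap R^\star_{h+1}) \setminus R^\star_h$; the vertical slice of $G^\star$ at $x = x_0$ then contains the vertices $(x_0, h-1)$ and $(x_0, h+1)$ but misses $(x_0, h)$, and is therefore disconnected. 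Applying the vertical analog of Theorem \ref{symmTheoStrict} to $G^\star$ produces a subgraph on $n$ vertices with first Dirichlet eigenvalue strictly less than $\lam_D(G^\star) = \lam_D^{(n)}$, contradicting the definition of $\lam_D^{(n)}$.

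The second clause of the walled-in condition is proved by the same argument with the roles of rows and columns interchanged, using the vertical versions of Theorems \ref{symmTheoWall} and \ref{symmTheoStrict}. The main obstacle is the unimodality step: the direct consequence of Theorem \ref{symmTheoWall} is only adjacent nesting, and one has to pass to the auxiliary minimizer $G^\star$ and exploit that a ``valley'' in $\{L_h\}$ would create a disconnected vertical slice in $G^\star$ in order to rule out such valleys.
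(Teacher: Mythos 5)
Your approach is essentially the same as the paper's: use Theorem \ref{symmTheoWall} to get that adjacent horizontal slices nest, and derive a contradiction from a ``valley'' in slice lengths by showing it would create a disconnected vertical slice in the horizontal symmetrization $G^\star$, which (via the vertical analogue of Theorem \ref{symmTheoStrict}) is incompatible with $G^\star$ being a minimizer. Your unimodality framing is cleaner than the paper's, which constructs a candidate walling slice $G^{k-m}$ and argues directly about which slices it does or does not wall in, but the underlying mechanism is identical.

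There is one genuine, though easily repaired, gap in the unimodality step. You negate unimodality by asserting the existence of an interior $h$ with the \emph{strict} valley $L_{h-1} > L_h < L_{h+1}$. But a finite sequence can fail to be unimodal without ever having a strict local minimum: for instance the lengths $3, 2, 2, 3$ have no index $h$ with $L_{h-1} > L_h < L_{h+1}$, yet the sequence is not unimodal, and the corresponding $G^\star$ would indeed have a disconnected vertical slice. The correct negation of unimodality gives you indices $a < b < c$ with $L_a > L_b < L_c$; from this you should extract a ``plateau valley,'' i.e.\ consecutive indices with $L_{p-1} > L_p = \cdots = L_q < L_{q+1}$. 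Your mechanism still applies verbatim to this configuration: since $R^\star_{p-1}$ and $R^\star_{q+1}$ are positively centered intervals both strictly longer than $R^\star_p = \cdots = R^\star_q$, any $x_0$ in $\left(R^\star_{p-1}\cap R^\star_{q+1}\right)\setminus R^\star_p$ produces a disconnected vertical slice in $G^\star$, and the contradiction follows. With that adjustment the argument is complete and matches the paper's.
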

\begin{proof}From Proposition \ref{stronglyConnected} we know that $G$ is strongly connected. Hence, we just need to verify the ``walling in'' property. We will first show that a horizontal slice exists which walls in all other horizontal slices. Let $k$ be the largest integer so that $G^k$, the $k$th horizontal slice of $G$, is non-empty. Then, by Theorem \ref{symmTheoWall}, either $G^k$ walls in $G^{k-1}$ or $G^{k-1}$ walls in $G^k$. Moving down the graph in this fashion, we can find an integer $m$ (possibly equal to $0$) such that $G^{k-m}$ walls in $G^{k-i}$ for all $i = 0$, $1$, $\cdots$, $m-1$, and $m+1$, i.e. $G^{k-m}$ walls in all of the slices above it and the slice immediately below.
In the graph below we have highlighted the $G^{k-m}$th slice.
\begin{center}
    \begin{tikzpicture}
        [interior/.style={circle,draw=black,fill=black, inner sep=0pt,minimum size = 2.5mm},
        boundary/.style={circle,draw=black,fill=black!60, inner sep=0pt,minimum size = 2.5mm},
        exterior/.style={circle,draw=black,fill=white, inner sep=0pt,minimum size = 2.5mm},
        highlight/.style = {circle,draw= red, fill = red, inner sep=0pt, minimum size = 2.5mm}]
        \draw[step=.5cm] (-2.99,-2.99) grid (2.99,2.99);
        \draw (0,3) node {y};
        \draw (3,0) node {x};
        \draw (0,0) node [interior]{};
        \draw (.5,0) node [interior]{};
        \draw (-.5,0) node [interior]{};
        \draw (0,.5) node [interior]{};
        \draw (.5,.5) node [interior]{};
        \draw (0,1) node [interior]{};
        \draw (.5,1) node [interior]{};
        \draw (0,-.5) node [highlight]{};
        \draw (.5,-.5) node [highlight]{};
        \draw (1,-.5) node [highlight]{};
        \draw (-.5,-.5) node [highlight]{};
        \draw (-1,-.5) node [highlight]{};
        \draw (0,-1) node [interior]{};
        \draw (.5,-1) node [interior]{};
        \draw (1,-1) node [interior]{};
    \end{tikzpicture}
\end{center}
We will show that $G^{k-m}$ must wall in all horizontal slices. For the sake of contradiction, suppose that there exists some integer $l > m$ so that $G^{k-m}$ does not wall in $G^{k-l}$. Furthermore, let $l$ be the smallest such integer. In the following subgraph, $G^{k-l}$ could be the bottom slice.
\begin{center}
    \begin{tikzpicture}
        [interior/.style={circle,draw=black,fill=black, inner sep=0pt,minimum size = 2.5mm},
        boundary/.style={circle,draw=black,fill=black!60, inner sep=0pt,minimum size = 2.5mm},
        exterior/.style={circle,draw=black,fill=white, inner sep=0pt,minimum size = 2.5mm},
        highlight/.style = {circle,draw= red, fill = red, inner sep=0pt, minimum size = 2.5mm}]
        \draw[step=.5cm] (-2.99,-2.99) grid (2.99,2.99);
        \draw (0,3) node {y};
        \draw (3,0) node {x};
        \draw (0,0) node [interior]{};
        \draw (.5,0) node [interior]{};
        \draw (-.5,0) node [interior]{};
        \draw (0,.5) node [interior]{};
        \draw (.5,.5) node [interior]{};
        \draw (0,1) node [interior]{};
        \draw (.5,1) node [interior]{};
        \draw (0,-.5) node [highlight]{};
        \draw (.5,-.5) node [highlight]{};
        \draw (1,-.5) node [highlight]{};
        \draw (-.5,-.5) node [highlight]{};
        \draw (-1,-.5) node [highlight]{};
        \draw (0,-1) node [interior]{};
        \draw (.5,-1) node [interior]{};
        \draw (1,-1) node [interior]{};
        \draw (0,-1.5) node [interior]{};
        \draw (.5,-1.5) node [interior]{};
        \draw (1,-1.5) node [interior]{};
        \draw (0,-2) node [interior]{};
        \draw (.5,-2) node [interior]{};
        \draw (1,-2) node [interior]{};
        \draw (1.5,-2) node [interior]{};
    \end{tikzpicture}
\end{center}
Consider the positive horizontal symmetrization of the above subgraph.
\begin{center}
    \begin{tikzpicture}
        [interior/.style={circle,draw=black,fill=black, inner sep=0pt,minimum size = 2.5mm},
        boundary/.style={circle,draw=black,fill=black!60, inner sep=0pt,minimum size = 2.5mm},
        exterior/.style={circle,draw=black,fill=white, inner sep=0pt,minimum size = 2.5mm},
        highlight/.style = {circle,draw= red, fill = red, inner sep=0pt, minimum size = 2.5mm}]
        \draw[step=.5cm] (-2.99,-2.99) grid (2.99,2.99);
        \draw (0,3) node {y};
        \draw (3,0) node {x};
        \draw (0,0) node [interior]{};
        \draw (.5,0) node [interior]{};
        \draw (-.5,0) node [interior]{};
        \draw (0,.5) node [interior]{};
        \draw (.5,.5) node [interior]{};
        \draw (0,1) node [interior]{};
        \draw (.5,1) node [interior]{};
        \draw (0,-.5) node [highlight]{};
        \draw (.5,-.5) node [highlight]{};
        \draw (1,-.5) node [highlight]{};
        \draw (-.5,-.5) node [highlight]{};
        \draw (-1,-.5) node [highlight]{};
        \draw (0,-1) node [interior]{};
        \draw (.5,-1) node [interior]{};
        \draw (-.5,-1) node [interior]{};
        \draw (0,-1.5) node [interior]{};
        \draw (.5,-1.5) node [interior]{};
        \draw (-.5,-1.5) node [interior]{};
        \draw (0,-2) node [interior]{};
        \draw (.5,-2) node [interior]{};
        \draw (1,-2) node [interior]{};
        \draw (-.5,-2) node [interior]{};
    \end{tikzpicture}
\end{center}
Since symmetrization can only lower $\lam_D$, we must still have a minimizing subgraph. However, the rightmost vertical slice is not connected. This contradicts the vertical version of Theorem \ref{symmTheoStrict}. In fact, this argument works in complete generality. Since all slices in between $G^{k-m}$ and $G^{k-l}$ do not wall in $G^{k-m}$ ($l$ was chosen to be minimal), $(G^{\star})^{k-m}$ will extend farther to the right then all slices in between $(G^{\star})^{k-m}$ and $(G^{\star})^{k-l}$. The same statement holds for $(G^{\star})^{k-l}$. This implies the existence of a non-connected vertical slice in $G^{\star}$ which cannot happen if $G^{\star}$ is a minimizing subgraph. Hence, no such $l$ exists.

To prove the same statement for vertical slices we simply note that rotating $\ZZ^2$ by 90 degrees is an automorphism of $\ZZ^2$.
\end{proof}
\begin{coro}If $G$ is a minimizing subgraph, then $\mathbf{G}$ is simply connected.
\end{coro}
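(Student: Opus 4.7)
The plan is to deduce simple connectedness of $\mathbf{G}$ from the fact, already established in this section, that a minimizing subgraph is both connected (proved in the previous section) and strongly connected (Proposition \ref{stronglyConnected}). The stronger walled-in property from Proposition \ref{walled-in} is not actually needed for this corollary; strong connectedness alone will do the work.

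First I would check that $\mathbf{G}$ is path-connected. Since $G$ is connected as a graph, any two of its vertices can be joined by a finite sequence of $\ZZ^2$-neighbors, and the closed unit squares centered at two lattice-adjacent points share an entire edge of length one. Hence the interior of the union of all the squares, which is $\mathbf{G}$, is path-connected.

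For simple connectedness of a connected open set in $\RR^2$, it suffices to show that its complement has no bounded connected component. Suppose for contradiction that $U$ is a bounded connected component of $\RR^2 \setminus \mathbf{G}$. Because the closed unit squares $Q_{c,d}$ centered at lattice points tile $\RR^2$, and since for $(c,d)\notin G$ the open square $\mathrm{int}(Q_{c,d})$ is entirely disjoint from $\mathbf{G}$, a short bookkeeping argument shows that $U$ must contain at least one lattice point $(a,b) \notin G$, namely the center of some missing unit square enclosed by $U$. Now I would consider the horizontal ray $\{(x,b) : x > a\}$. As long as this ray lies in interiors of squares $Q_{k,b}$ with $(k,b) \notin G$, it remains in $\RR^2 \setminus \mathbf{G}$ and hence, by connectedness of the ray, in $U$. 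Because $U$ is bounded, the ray cannot stay in $U$ forever, so there is a least integer $a_2 > a$ with $(a_2,b) \in G$. An identical argument to the left gives a greatest integer $a_1 < a$ with $(a_1,b) \in G$. But then part (2) of the definition preceding Proposition \ref{stronglyConnected} (horizontal strong connectedness), applied to the pair $(a_1,b)$ and $(a_2,b)$, forces $(a,b) \in G$, contradicting $(a,b) \notin G$.

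The only subtle step is the structural lemma that every bounded component of $\RR^2 \setminus \mathbf{G}$ contains a lattice point of $\ZZ^2 \setminus G$; I would handle it by noting that such a bounded component is a finite union of boundary segments of the tiling together with interiors of missing squares, and at least one missing square (hence one such lattice center) must be contained in it because the component has positive area and is bounded away from infinity by $\mathbf{G}$. This is really the main---and only---obstacle; once it is in place the rest of the argument reduces to a one-line application of strong connectedness to get the contradiction.
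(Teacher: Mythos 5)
Your argument is correct, but it takes a genuinely different route from the paper. The paper's proof invokes the walled-in property (Proposition \ref{walled-in}): letting $l$ be the horizontal segment that walls in $\mathbf{G}$, every point of $\mathbf{G}$ lies on a vertical segment inside $\mathbf{G}$ ending on $l$, which yields a vertical deformation retraction of $\mathbf{G}$ onto a segment and hence contractibility. You instead use only strong connectedness (Proposition \ref{stronglyConnected}) and work with the complement: a bounded complementary component would contain a lattice point $(a,b)\notin G$ trapped horizontally between two points $(a_1,b),(a_2,b)\in G$, contradicting horizontal strong connectedness. This is a real trade-off: your proof needs only Theorem \ref{symmTheoStrict} and its vertical analogue, not Theorem \ref{symmTheoWall}, so it rests on weaker input; the paper's proof is shorter given that walled-in has already been established and produces contractibility directly rather than passing through the complement criterion. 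The one place your write-up should be tightened is the structural lemma: the ``positive area'' justification is not quite right, since a priori a component of $\RR^2\setminus\mathbf{G}$ could consist of tiling edges and corners alone. The clean argument is that any edge-interior or corner point of the tiling lying outside $\mathbf{G}$ must have at least one adjacent square whose center is missing from $G$; that point then lies in the closure of the corresponding missing open square, which is itself contained in the complement, so every component of the closed set $\RR^2\setminus\mathbf{G}$ contains an entire missing open square and hence its lattice center. With that substitution the proof is complete.
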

\begin{proof} Let $l$ be the horizontal line segment which walls in $\mathbf{G}$. Since all vertical paths are continuous, every point $x$ in $\mathbf{G}$ lies on a vertical line segment $s_x$ entirely contained in $\mathbf{G}$, which starts at $x$ and ends at $l$. This is easily seen to imply that $G$ is contractible
\end{proof}
Now we show that minimizing subgraphs cannot be too thin.
\begin{prop}\label{diamBound}For $\Omega \subset \RR^2$, let $D(\Omega)$ denote the diameter of $\Omega$. Then there exists $C > 0$ such that for any minimizing subgraph $G$
\[D(\mathbf{G}) \leq C\sqrt{|G|}\]
\end{prop}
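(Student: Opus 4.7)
By Proposition \ref{walled-in}, the minimizing $G$ is strongly connected and walled-in, so it is contained in a bounding rectangle $R$ of dimensions $W\times H$ with $G$'s horizontal and vertical projections filling those of $R$. Since $\mathbf{G}\subset R$, one has $D(\mathbf{G})\leq\sqrt{(W+1)^2+(H+1)^2}$, so it suffices to prove $W,H\leq C'\sqrt n$.

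My first step is a subgraph-monotonicity principle for $\lam_D$: if $\widetilde G\subset G'$ are two subgraphs of $\ZZ^2$, then extending any Dirichlet eigenfunction on $\widetilde G$ by zero to $G'$ furnishes an admissible trial function for $G'$ whose $R_{G'}$-value equals the $R_{\widetilde G}$-value of the original (the added edges all have one endpoint at a value that previously contributed the same as a boundary term), so $\lam_D(G')\leq\lam_D(\widetilde G)$. Applied with $G\subset R$, together with the explicit product-of-sines diagonalization of $L_D(R)$, this gives $\lam_D(G)\geq\lam_D(R)=4-2\cos(\pi/(W+1))-2\cos(\pi/(H+1))\geq c_0\bigl((W+1)^{-2}+(H+1)^{-2}\bigr)$ for some absolute $c_0>0$. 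On the other side, testing the variational formula with a $\lceil\sqrt n\rceil\times\lceil\sqrt n\rceil$ square subgraph (trimmed to exactly $n$ vertices) gives $\lam_D^{(n)}\leq C_1/n$. Since $G$ is minimizing, $\lam_D(G)=\lam_D^{(n)}$, and combining these inequalities yields
\[c_0\Bigl(\frac{1}{(W+1)^2}+\frac{1}{(H+1)^2}\Bigr)\leq\frac{C_1}{n},\]
from which the lower bounds $W,H\geq c_2\sqrt n$ follow.

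To upgrade these to matching upper bounds I plan to establish that $G$ fills a positive fraction of its bounding box, namely $WH\leq C_3 n$. Combined with $H\geq c_2\sqrt n$ this produces $W\leq C_3 n/H\leq (C_3/c_2)\sqrt n$, and symmetrically for $H$, completing the proof. To prove the filling-fraction estimate I will invoke diagonal analogues of Theorems \ref{symmTheoStrict} and \ref{symmTheoWall}, obtained by transferring their proofs verbatim to the diagonal symmetrization of the previous subsection, working with diagonal slices in place of horizontal ones. These analogues, together with the horizontal/vertical walling-in of Proposition \ref{walled-in}, force $G$ to be \emph{eight-direction convex}: its intersections with every horizontal, vertical, diagonal, and anti-diagonal line are intervals, and in each of these four families of slices there is a central longest slice walling in all the others. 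For such a strongly connected 8-convex subgraph sitting in a $W\times H$ box, the four corner regions of the box that are excluded from $G$ are each cut off by a single diagonal or anti-diagonal segment, each contributing at most a fixed fraction of the box area; a short packing computation then yields $|G|\geq c\,WH$.

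The delicate part will be the filling-fraction bound $WH\leq C_3 n$. The rectangle-containment lower bound on $\lam_D(G)$ alone only ever produces lower bounds on $W$ and $H$ and is perfectly compatible with sparse walled-in configurations such as the $+$, where $|G|\sim W+H$ while $WH\gg n$; ruling out such shapes requires an additional geometric constraint. Supplying the diagonal analogues of Theorems~\ref{symmTheoStrict} and \ref{symmTheoWall} (and then executing the corner-packing estimate cleanly, with careful attention to off-by-one issues arising from the ``extra point on the right'' convention in the positive symmetrizations) is therefore the main technical content of the proof.
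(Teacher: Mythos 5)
Your first half (the containment monotonicity $G\subset R\Rightarrow\lam_D(R)\le\lam_D(G)$, the explicit rectangle eigenvalue, and the test-function bound $\lam_D^{(n)}\le C_1/n$) is correct, but it only produces \emph{lower} bounds $W,H\ge c_2\sqrt n$, while the proposition needs upper bounds. Everything therefore rests on the filling-fraction estimate $|G|\ge c\,WH$, and that is where the proof breaks down, for two reasons. First, the diagonal analogues of Theorems \ref{symmTheoStrict} and \ref{symmTheoWall} do not transfer ``verbatim'': adjacent diagonal slices interleave along the transversal direction rather than aligning vertex-over-vertex (which is exactly why Lemma \ref{diagLemm} needs the zigzag diagram and four switch lemmas instead of Lemma \ref{verticalLemm}), the paper contains no strict version of Lemma \ref{diagLemm} to invoke, and the naive notion of one diagonal slice ``walling in'' another fails even for a rectangle, whose diagonal slices slide rather than nest. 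Formulating and proving the correct strict statements is genuinely new work. Second, and more fatally, even granting full eight-direction convexity together with horizontal and vertical walling-in, the corner-packing argument does not yield $|G|\ge c\,WH$: the inscribed triangles supplied by anti-diagonal convexity have legs of length $\min(W_i,H_j)$, so the estimate degrades to $|G|\gtrsim\min(W,H)^2$, which bounds the \emph{smaller} side of the box by $C\sqrt n$ but says nothing about the larger one. Concretely, the union of the diamond $\{|x|+|y|\le H/2\}$ with the single row $[-W/2,W/2]\times\{0\}$, $W\gg H$, is strongly connected, walled-in horizontally and vertically, and has every horizontal, vertical, diagonal and anti-diagonal slice an interval, yet $|G|\approx H^2/2+W\ll WH$. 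Excluding such shapes would require a diagonal \emph{walling-in} theorem, not merely diagonal slice-connectivity, and you have not supplied one.

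The paper avoids all of this with a different mechanism worth adopting: it uses only the already-proven non-strict Theorem \ref{diagTheo}. A vertical slice of length $H$ in $G$ meets $H$ consecutive diagonal slices, so $G^{\dagger}$, which is again minimizing, contains a diagonal staircase of length $H$; applying Proposition \ref{walled-in} and Proposition \ref{stronglyConnected} to $G^{\dagger}$ then forces a triangular array of order $H^2$ vertices into one of the quadrants cut out by the walling slices, whence $H^2\lesssim n$ directly --- with no strict diagonal theorems and no need for the lower bounds on $W$ and $H$.
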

\begin{proof}Set $n = |G|$. Let $W$ and $H$ be the length of the longest horizontal slice and longest vertical slice respectively. Then Proposition \ref{walled-in} implies that $\mathbf{G}$ is contained inside a $W+1/2$ by $H+1/2$ rectangle. Then
\[D(\mathbf{G}) \leq \sqrt{(W+1/2)^2 + (H+1/2)^2}\]
Thus, it suffices to prove that $H$ and $W$ are both $O(\sqrt{n})$. Clearly it suffices to only prove that $H = O(\sqrt{n})$.

We have a vertical slice of length $H$ in $G$. Below we draw such a slice for $H = 11$.
\begin{center}
\begin{tikzpicture}
        [interior/.style={circle,draw=black,fill=black, inner sep=0pt,minimum size = 2.5mm},
        boundary/.style={circle,draw=black,fill=black!60, inner sep=0pt,minimum size = 2.5mm},
        exterior/.style={circle,draw=black,fill=white, inner sep=0pt,minimum size = 2.5mm},
        highlight/.style = {circle,draw= red, fill = red, inner sep=0pt, minimum size = 2.5mm}]
        \draw[step=.5cm] (-2.99,-2.99) grid (2.99,2.99);
        \draw (0,3) node {y};
        \draw (3,0) node {x};
        \draw (0,2.5) node [interior]{};
        \draw (0,2) node [interior]{};
        \draw (0,1.5) node [interior]{};
        \draw (0,1) node [interior]{};
        \draw (0,.5) node [interior]{};
        \draw (0,0) node [interior]{};
        \draw (0,-.5) node [interior]{};
        \draw (0,-1) node [interior]{};
        \draw (0,-1.5) node [interior]{};
        \draw (0,-2) node [interior]{};
        \draw (0,-2.5) node [interior]{};
    \end{tikzpicture}
\end{center}
Now we consider $G^{\dagger}$, the diagonal symmetrization of $G$. From Theorem \ref{diagTheo}, $G^{\dagger}$ is still a minimizing subgraph. Due to the presence of the slice of length $H$ in $G$, we can find a ``diagonal path'' of length $H$ in $G^{\dagger}$.
\begin{center}
\begin{tikzpicture}
        [interior/.style={circle,draw=black,fill=black, inner sep=0pt,minimum size = 2.5mm},
        boundary/.style={circle,draw=black,fill=black!60, inner sep=0pt,minimum size = 2.5mm},
        exterior/.style={circle,draw=black,fill=white, inner sep=0pt,minimum size = 2.5mm},
        highlight/.style = {circle,draw= red, fill = red, inner sep=0pt, minimum size = 2.5mm}]
        \draw[step=.5cm] (-2.99,-2.99) grid (2.99,2.99);
        \draw (0,3) node {y};
        \draw (3,0) node {x};
        \draw (0,0) node [interior]{};
        \draw (0,.5) node [interior]{};
        \draw (-.5,.5) node [interior]{};
        \draw (-.5,1) node [interior]{};
        \draw (-1,1) node [interior]{};
        \draw (-1,1.5) node [interior]{};
        \draw (.5,0) node [interior]{};
        \draw (.5,-.5) node [interior]{};
        \draw (1,-.5) node [interior]{};
        \draw (1,-1) node [interior]{};
        \draw (1.5,-1) node [interior]{};
    \end{tikzpicture}
\end{center}
From Proposition \ref{walled-in}, $G^{\dagger}$ must be walled-in. Hence, we have a horizontal slice and a vertical slice that both ``wall in'' this diagonal slice.
\begin{center}
\begin{tikzpicture}
        [interior/.style={circle,draw=black,fill=black, inner sep=0pt,minimum size = 2.5mm},
        boundary/.style={circle,draw=black,fill=black!60, inner sep=0pt,minimum size = 2.5mm},
        exterior/.style={circle,draw=black,fill=white, inner sep=0pt,minimum size = 2.5mm},
        highlight/.style = {circle,draw= red, fill = red, inner sep=0pt, minimum size = 2.5mm}]
        \draw[step=.5cm] (-2.99,-2.99) grid (2.99,2.99);
        \draw (0,3) node {y};
        \draw (3,0) node {x};
        \draw (0,0) node [interior]{};
        \draw (0,.5) node [interior]{};
        \draw (-.5,.5) node [interior]{};
        \draw (-.5,1) node [interior]{};
        \draw (-1,1) node [interior]{};
        \draw (-1,1.5) node [interior]{};
        \draw (.5,0) node [interior]{};
        \draw (.5,-.5) node [interior]{};
        \draw (1,-.5) node [interior]{};
        \draw (1,-1) node [interior]{};
        \draw (1.5,-1) node [interior]{};
        \draw (-.5,1) node [interior]{};
        \draw (-.5,1.5) node [interior]{};
        \draw (-.5,2) node [interior]{};
        \draw (-.5,-.5) node [interior]{};
        \draw (-.5,-1) node [interior]{};
        \draw (-.5,-1.5) node [interior]{};
        \draw (1,0) node [interior]{};
        \draw (1.5,0) node [interior]{};
        \draw (2,0) node [interior]{};
        \draw (-.5,0) node [interior]{};
        \draw (-1,0) node [interior]{};
        \draw (-1.5,0) node [interior]{};
    \end{tikzpicture}
\end{center}
At least $H - 2$ points on the diagonal path do not lie on these horizontal and vertical slices. These $H-2$ points lie inside the rectangle determined by the horizontal and vertical slices. Furthermore, these horizontal and vertical slices divide the rectangle into four quadrants. Thus, $(H-2)/4$ points must lie in at least one of these quadrants. Now we focus our attention on this quadrant. A representative picture might look like
\begin{center}
\begin{tikzpicture}
        [interior/.style={circle,draw=black,fill=black, inner sep=0pt,minimum size = 2.5mm},
        boundary/.style={circle,draw=black,fill=black!60, inner sep=0pt,minimum size = 2.5mm},
        exterior/.style={circle,draw=black,fill=white, inner sep=0pt,minimum size = 2.5mm},
        highlight/.style = {circle,draw= red, fill = red, inner sep=0pt, minimum size = 2.5mm}]
        \draw[step=.5cm] (-2.99,-2.99) grid (2.99,2.99);
        \draw (0,3) node {y};
        \draw (3,0) node {x};
        \draw (-.5,-.5) node [interior]{};
        \draw (0,-.5) node [interior]{};
        \draw (.5,-.5) node [interior]{};
        \draw (1,-.5) node [interior]{};
        \draw (1.5,-.5) node [interior]{};
        \draw (2,-.5) node [interior]{};
        \draw (-.5,0) node [interior]{};
        \draw (-.5,.5) node [interior]{};
        \draw (-.5,1) node [interior]{};
        \draw (-.5,1.5) node [interior]{};
        \draw (-.5,2) node [interior]{};
        \draw (2,-.5) node [interior]{};
        \draw (2,0) node [interior]{};
        \draw (1.5,0) node [interior]{};
        \draw (1.5,.5) node [interior]{};
        \draw (1,.5) node [interior]{};
        \draw (1,1) node [interior]{};
        \draw (.5,1) node [interior]{};
        \draw (.5,1.5) node [interior]{};
        \draw (0,1.5) node [interior]{};
        \draw (0,2) node [interior]{};
        \draw (-.5,2) node [interior]{};
    \end{tikzpicture}
\end{center}
Proposition \ref{stronglyConnected} implies that all horizontal and vertical slices must be connected. Hence, the highlighted points must also be in the graph.
\begin{center}
\begin{tikzpicture}
        [interior/.style={circle,draw=black,fill=black, inner sep=0pt,minimum size = 2.5mm},
        boundary/.style={circle,draw=black,fill=black!60, inner sep=0pt,minimum size = 2.5mm},
        exterior/.style={circle,draw=black,fill=white, inner sep=0pt,minimum size = 2.5mm},
        highlight/.style = {circle,draw= red, fill = red, inner sep=0pt, minimum size = 2.5mm}]
        \draw[step=.5cm] (-2.99,-2.99) grid (2.99,2.99);
        \draw (0,3) node {y};
        \draw (3,0) node {x};
        \draw (-.5,-.5) node [interior]{};
        \draw (0,-.5) node [interior]{};
        \draw (.5,-.5) node [interior]{};
        \draw (1,-.5) node [interior]{};
        \draw (1.5,-.5) node [interior]{};
        \draw (2,-.5) node [interior]{};
        \draw (-.5,0) node [interior]{};
        \draw (-.5,.5) node [interior]{};
        \draw (-.5,1) node [interior]{};
        \draw (-.5,1.5) node [interior]{};
        \draw (-.5,2) node [interior]{};
        \draw (2,-.5) node [interior]{};
        \draw (2,0) node [interior]{};
        \draw (1.5,0) node [interior]{};
        \draw (1.5,.5) node [interior]{};
        \draw (1,.5) node [interior]{};
        \draw (1,1) node [interior]{};
        \draw (.5,1) node [interior]{};
        \draw (.5,1.5) node [interior]{};
        \draw (0,1.5) node [interior]{};
        \draw (0,2) node [interior]{};
        \draw (-.5,2) node [interior]{};
        \draw (0,0) node [highlight]{};
        \draw (.5,0) node [highlight]{};
        \draw (1,0) node [highlight]{};
        \draw (0,.5) node [highlight]{};
        \draw (.5,.5) node [highlight]{};
        \draw (0,1) node [highlight]{};
    \end{tikzpicture}
\end{center}
In general we can conclude that there are at least
\[\sum_{j=1}^{(H-2)/8}j = (1/2)\left(\frac{H-2}{8}\right)\left(\frac{H-2}{8} + 1\right) = \frac{H^2 + 4H - 12}{128}\]
points in $G$. That is,
\[\frac{H^2 + 4H - 12}{128} \leq n \Rightarrow \]
\[H \leq \sqrt{128n + 12}\]
\end{proof}

\section{Approximation By Continuous Eigenvalues}\label{analysis}
Now we establish some asymptotic estimates for $\lam_D$. This will be accomplished by relating $\lam_D(G)$ to the regular Laplacian eigenvalues of a related domain in $\RR^2$. What follows is a minor modification of ideas used in finite difference approximations to PDEs. See \cite{n7}, \cite{n3}, and \cite{n4}. The goal of this section is to prove Theorem \ref{approxTheo} which we quote here again.
For any bounded domain $\Omega \subset \RR^2$ and $\ep > 0$, we defined $B^{\ell_1}_{\ep}\left(\Omega\right)$ to be the interior of the set of all points with $\ell_1$ distance less than $\ep$ to $\overline{\Omega}$. We also set $\lam(\Omega)$ to be the first Dirichlet eigenvalue of the regular Laplacian. For subgraphs $G$ with $n$ vertices we will prove
    \begin{theo}For some constant $C > 0$
    \[\frac{\lam\left(B^{\ell_1}_{2/\sqrt{n}}\left(\mathbf{G}^*\right)\right)}{n + C\lam\left(B^{\ell_1}_{2/\sqrt{n}}\left(\mathbf{G}^*\right)\right)} \leq \lam_D(G) \leq \frac{\lam\left(\mathbf{G}^*\right)}{n - C\lam\left(\mathbf{G}^*\right)}\]
    \end{theo}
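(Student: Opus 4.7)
The plan is to prove both inequalities using the variational principles supplied by Theorem~\ref{rayleighQuotientD} (for $\lam_D(G)$) and Theorem~\ref{variational} (for $\lam(\Omega)$), and to transfer between the discrete and continuous worlds by explicit test-function constructions in the spirit of finite-difference / finite-element convergence theory. It is cleanest to work in the unrescaled $\mathbf{G}$, where each $v \in G$ is the center of a unit square $S_v$, and to perform the homothety $y \mapsto y/\sqrt{n}$ only at the end. Write $\Lambda := \lam(\mathbf{G}^*)$ and $\Lambda' := \lam(B^{\ell_1}_{2/\sqrt{n}}(\mathbf{G}^*))$.

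For the upper bound, I would take the principal Dirichlet eigenfunction $u$ on $\mathbf{G}^*$ normalized by $\int u^2 = 1$ (so $\int|\nabla u|^2 = \Lambda$), lift it to $\tilde u(y) := u(y/\sqrt{n})$ on $\mathbf{G}$, and define the discrete test function $f(v) := \int_{S_v} \tilde u$ for $v \in G$ with $f(v) := 0$ for $v \in \pa G$. Jensen combined with the Poincar\'e inequality on unit squares yields the denominator bound $\sum_v f(v)^2 \geq n - C_P \Lambda$. For the numerator, a change of variables in $f(v) - f(v+e_1) = \int_{S_v}[\tilde u(x) - \tilde u(x+e_1)]\,dx$ produces the identity $f(v) - f(v+e_1) = -\int_{S_v \cup S_{v+e_1}} (\partial_1 \tilde u)(y)\, m_{v,v+e_1}(y_1)\,dy$, where $m_{v,v+e_1}$ is a tent-shaped weight supported on $S_v \cup S_{v+e_1}$ of total integral $1$. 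A weighted Cauchy--Schwarz gives $(f(v)-f(v+e_1))^2 \leq \int m_{v,v+e_1}|\partial_1 \tilde u|^2\,dy$, and the critical observation is that on each horizontal row the weights from the two adjacent horizontal edges telescope to~$1$: $m_{v-e_1,v}(y_1) + m_{v,v+e_1}(y_1) = 1$ throughout $S_v$. Summing therefore yields the sharp bound $\sum_{\text{horiz edges}}(f(v)-f(w))^2 \leq \int_{\mathbf{G}}|\partial_1 \tilde u|^2$, and the vertical analog completes $\sum_{v \sim w}(f(v)-f(w))^2 \leq \Lambda$, so that $\lam_D(G) \leq R_G(f) \leq \Lambda/(n - C_P \Lambda)$.

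For the lower bound, I would take a principal eigenfunction $f$ of $\lam_D(G)$, extend it by zero to $\ZZ^2 \setminus G$, and define $\hat f \colon \RR^2 \to \RR$ by piecewise linear interpolation on the triangulation that splits each integer-corner square $[a,a+1] \times [b,b+1]$ along the diagonal $(a,b)$--$(a+1,b+1)$. On each right triangle the linear interpolant satisfies $\int_T|\nabla \hat f|^2 = \tfrac{1}{2}[(\text{horizontal-side difference})^2 + (\text{vertical-side difference})^2]$, and summing (each unit lattice edge is a side of exactly two triangles, and the diagonals are not lattice edges) yields the exact identity $\int_{\RR^2}|\nabla \hat f|^2 = \sum_{v \sim w}(f(v)-f(w))^2$. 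The finite-element mass formula $\int_T \hat f^2 = \tfrac{|T|}{12}[(a+b+c)^2 + a^2 + b^2 + c^2]$, combined with the identity $(a+b+c)^2 = 3(a^2+b^2+c^2) - [(a-b)^2 + (a-c)^2 + (b-c)^2]$ and the bound $(a-c)^2 \leq 2(a-b)^2 + 2(b-c)^2$ (controlling the diagonal difference by the two unit-edge differences), yields $\int_{\RR^2}\hat f^2 \geq \sum_v f(v)^2 - C \sum_{v \sim w}(f(v)-f(w))^2$. Rescaling $\hat f^*(y) := \hat f(y\sqrt{n})$ produces an $H^1_0$ test function on $B^{\ell_1}_{2/\sqrt{n}}(\mathbf{G}^*)$---the buffer $2/\sqrt{n}$ suffices because a triangle touching any $v \in G$ reaches at most $\ell_1$ distance $1/\sqrt{n}$ outside $\mathbf{G}^*$ after rescaling---and Theorem~\ref{variational} together with the eigenfunction relation $\sum_{v\sim w}(f(v)-f(w))^2 = \lam_D(G)\sum_v f(v)^2$ gives $\Lambda' \leq n\lam_D(G)/(1 - C\lam_D(G))$, which rearranges to $\lam_D(G) \geq \Lambda'/(n + C\Lambda')$.

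The main obstacle is keeping the leading constant in front of $\Lambda$ in the upper-bound numerator (and $\Lambda'$ in the lower-bound numerator) equal to exactly~$1$, since the asymptotic argument at the end of the introduction requires $\limsup \Lambda_n \leq \lam(D)$ with the sharp leading constant; a stray factor of $2$ would spoil the final limit. The naive estimate $(f(v)-f(w))^2 \leq \int_{S_v \cup S_w}|\partial_1 \tilde u|^2$ (without the tent weight) produces exactly such a stray factor, coming from the multiplicity with which each sub-square is visited by the regions of adjacent edges; the telescoping identity for the tent weights---morally a discrete integration-by-parts---is the substantive technical point that avoids it. The remaining ingredients (the Poincar\'e inequality on a unit square, the finite-element identities, and the geometric verification of the $\ell_1$ support of $\hat f^*$) are routine.
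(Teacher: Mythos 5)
Your proposal is correct and follows essentially the same route as the paper: the upper bound via square-averaging of the continuous eigenfunction, the Poincar\'e inequality on squares for the denominator, and a tent-weight/integration-by-parts identity for the numerator (your weighted Cauchy--Schwarz with telescoping tents is the inequality form of the paper's exact identity in Lemma \ref{annoyingLemm}); the lower bound via piecewise-linear interpolation on a diagonal triangulation, the exact gradient-energy identity, the P1 mass formula, and AM--GM to absorb the diagonal differences. The only differences are cosmetic (working unrescaled, the choice of diagonal, and the max of the two constants).
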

\subsection{An Upperbound for $\lam_D$}
Here we prove
\begin{theo}\label{upperBound} Let $n = |G|$. Then
$$\lam_D(G) \leq \frac{\pi^2\lam(\mathbf{G}^*)}{\pi^2n - \lam(\mathbf{G}^*)}$$
\end{theo}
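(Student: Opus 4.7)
The plan is to apply the variational characterization $\lam_D(G) \le R_G(f)$ from Theorem~\ref{rayleighQuotientD} to a carefully chosen test function $f$ built from the principal Dirichlet eigenfunction of $\mathbf{G}$. Because of the scaling law $\lam(\mathbf{G}^*) = n\,\lam(\mathbf{G})$, the inequality to prove is equivalent to $\lam_D(G) \le \pi^2 \lam(\mathbf{G})/(\pi^2 - \lam(\mathbf{G}))$, and it is this form I will target. I take $u \in H_0^1(\mathbf{G})$ to be the first Dirichlet eigenfunction normalized so $\int_{\mathbf{G}} u^2 = 1$ (so $\int_{\mathbf{G}} |\nabla u|^2 = \lam(\mathbf{G})$), extend $u$ by zero to $\RR^2$, and define $f : \overline{G} \to \RR$ to be the cell average $f(v) \equiv \int_{Q_v} u(x)\, dx$, where $Q_v$ is the closed unit square centered at $v$. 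For $v \in \pa G$ the square $Q_v$ is disjoint from $\mathbf{G}$ up to a set of measure zero, so $f \equiv 0$ on $\pa G$ holds automatically.

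For the numerator of $R_G(f)$, I would use the fundamental theorem of calculus along horizontal slices to write
\[
f(v + e_1) - f(v) = \int_{Q_v}\int_0^1 \partial_1 u(x + s e_1)\, ds\, dx,
\]
then apply Cauchy--Schwarz twice to obtain
\[
(f(v+e_1) - f(v))^2 \le \int_{Q_v}\int_0^1 (\partial_1 u(x + s e_1))^2\, ds\, dx.
\]
A substitution $\xi = x_1 + s$ rewrites the right-hand side as an integral of $(\partial_1 u)^2$ against a triangular weight $w(\xi - v_1)$ supported on the $2 \times 1$ slab covering $Q_v \cup Q_{v+e_1}$, with $\int w = 1$ and, crucially, the partition-of-unity identity $\sum_{m \in \ZZ} w(\xi - m) \equiv 1$. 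Summing over horizontal edges of $\overline{G}$ and handling vertical edges identically then gives
\[
\sum_{x \sim y} (f(x)-f(y))^2 \le \int_{\mathbf{G}} |\nabla u|^2\, dx = \lam(\mathbf{G}).
\]

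For the denominator, I would invoke the sharp Neumann--Poincar\'e inequality on the unit square, whose first nontrivial Neumann eigenvalue is $\pi^2$ (realized by $\sin(\pi x_1)$ on $(-1/2, 1/2)^2$). Since $f(v)$ is exactly the mean of $u$ over $Q_v$, this yields
\[
\int_{Q_v}(u - f(v))^2\, dx \le \frac{1}{\pi^2}\int_{Q_v} |\nabla u|^2\, dx,
\]
and expanding $\int_{Q_v} u^2 = \int_{Q_v}(u - f(v))^2 + f(v)^2$ and summing over $v \in G$ (whose $Q_v$ tile $\mathbf{G}$ up to measure zero overlaps) gives
\[
\sum_{v \in G} f(v)^2 \ge \int_{\mathbf{G}} u^2\, dx - \frac{1}{\pi^2}\int_{\mathbf{G}} |\nabla u|^2\, dx = 1 - \frac{\lam(\mathbf{G})}{\pi^2}.
\]
Dividing the two bounds produces $R_G(f) \le \lam(\mathbf{G})/(1 - \lam(\mathbf{G})/\pi^2) = \pi^2\lam(\mathbf{G})/(\pi^2 - \lam(\mathbf{G}))$, and substituting $\lam(\mathbf{G}) = \lam(\mathbf{G}^*)/n$ recovers the stated estimate.

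The main obstacle is the numerator estimate. The naive pointwise bound $(u(y) - u(x))^2 \le \int_{\overline{xy}} |\nabla u|^2$ along the connecting segment produces only a one-dimensional line integral, and summing over edges fails to reconstitute the two-dimensional Dirichlet integral. The cell-average definition $f(v) = \int_{Q_v} u$, together with the pointwise identity $\sum_m w(\xi - m) = 1$ for the triangular weight, is precisely what converts the one-dimensional difference bound into a true two-dimensional estimate with optimal constant $1$. The only other delicate ingredient is the sharp constant $1/\pi^2$ in the Neumann--Poincar\'e inequality on the unit square, which is the ultimate source of the $\pi^2$ appearing in the theorem's denominator.
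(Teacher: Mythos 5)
Your proposal is correct and is essentially the paper's argument: both cell-average the continuous eigenfunction to build the discrete test function, bound the $L^2$ term via the Payne--Weinberger Poincar\'e inequality on squares (the source of the $\pi^2$), and control the edge-difference sum using the same triangular weight $\psi$ with its partition-of-unity property. The only difference is cosmetic: you work at unit scale and rescale at the end, and you obtain the numerator bound directly from Cauchy--Schwarz rather than through the exact sum-of-squares identity of Lemma \ref{annoyingLemm}, which is a slightly cleaner route to the same estimate.
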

\begin{proof}An extremely close variant of this is proved in \cite{n3}. We will adapt the ideas there to the case at hand. Recall that $\overline{G}$ was the union $G$ and $\pa G$. The operator $L_D$ acted on functions defined on $\overline{G}$ which vanished on $\pa G$. Now embed $\overline{G}$ and $G$ into $\RR^2$ by sending $(x,y)$ to $((1/\sqrt{n})x,(1/\sqrt{n})y)$. Denote these embeddings by $\overline{G}^*$ and $G^*$ respectively. Let $C_g$ be a closed square of volume $1/n$ centered at $g$. Observe that $\mathbf{G}^*$ is the interior of $\bigcup_{g \in G^*}C_g$. Let $u$ be an eigenfunction associated to $\lam(\mathbf{G}^*)$ with $\vv u\vv_2 = 1$. Extend $u$ to be $0$ outside of $\mathbf{G}^*$. To prove Theorem \ref{upperBound} we will create a ``discrete version'' of $u$ on $\overline{G}^*$ and then plug it into the relevant Rayleigh quotient.

Define a function $v$ on $\overline{G}^*$ by averaging values of $u$, i.e.
\[v(g) = n\int_{C_g}u(x,y)\ dxdy\]
From the Rayleigh quotient we have
$$\lam_D(G) = \lam_D(G^*) \leq \frac{\sum_{i \sim j}(v(i) - v(j))^2}{\sum_{i}v^2(i)}$$
We will proceed by bounding the numerator and denominator of this. We start with the following simple calculation
\begin{align*}
\sum_{g\in \overline{G}^*}\int_{C_g}\left[u(x,y)-v(g)\right]^2\ dxdy &= \sum_{g\in \overline{G}^*}\Big[\int_{C_g}u^2(x,y)\ dxdy\\
                                                          &\ - 2v(g)\int_{C_g}u(x,y)\ dxdy + v^2(g)\int_{C_g}\ dxdy\Big]\\
                                                          &= \int_{\overline{G}^*}u^2(x,y)\ dxdy + \sum_{g \in \overline{G}^*}\Big[\frac{-2}{n}v^2(g) + \frac{1}{n}v^2(g)\Big]\\
                                                          &= \int_{\overline{G}^*}u^2(x,y)\ dxdy - \frac{1}{n}\sum_{g \in \overline{G}^*}v^2(g)
\end{align*}
As a special case of the main Theorem in \cite{n10}, we have the following version of the Poincare Inequality:
\begin{theo}\label{poincare}Suppose $S \subset \RR^2$ is a square, $f \in H^1(S)$, and $\int_{S} f(x,y)\ dxdy = 0$. Then
\[\int_{S}|f(x,y)|^2\ dxdy \leq \frac{l^2}{\pi^2}\int_{S}|\nabla f(x,y)|^2\ dxdy\]
where $l$ is the side length of $S$.
\end{theo}
By construction
\begin{align*}
\int_{C_g}[u(x,y)-v(g)]\ dxdy &= \int_{C_g}u(x,y)\ dxdy - \int_{C_g}n\left(\int_{C_g}u(x,y)\ dxdy\right)\\
                              &=\int_{C_g}u(x,y)\ dxdy - \int_{C_g}u(x,y)\ dxdy\left(\int_{C_g}n\ dxdy\right)\\
                              &= 0
\end{align*}
Hence, we can apply \ref{poincare} to get
\begin{align*}
\int_{\mathbf{G}^*}u^2(x,y)\ dxdy - \frac{1}{n}\sum_{g\in \overline{G}^*} v^2(g) &= \sum_{g\in \overline{G}^*}\int_{C_g}[u(x,y)-v(g)]^2\ dxdy\\
                                                                           &\leq \sum_{g\in \overline{G}^*}\frac{1}{n\pi^2}\int_{C_g}|\nabla u|^2\ dxdy\\
                                                                           &=\frac{1}{n\pi^2}\int_{\mathbf{G}^*}|\nabla u|^2\ dxdy\\
                                                                           &= \frac{\lam(\mathbf{G}^*)}{n\pi^2}
\end{align*}
This implies that
\begin{align*}
\int_{\mathbf{G}^*}u^2(x,y)\ dxdy - \frac{1}{n}\sum_{g\in\overline{G}^*}v^2(g) &\leq \frac{\lam(\mathbf{G}^*)}{n\pi^2} \Leftrightarrow\\
\sum_{g \in \overline{G}^*}v^2(g) &\geq n\int_{\mathbf{G}^*}u^2(x,y)\ dxdy - \frac{\lam(\mathbf{G}^*)}{\pi^2} \Leftrightarrow\\
\sum_{g \in \overline{G}^*}v^2(g) &\geq n - \frac{\lam(\mathbf{G}^*)}{\pi^2}
\end{align*}
The final inequality follows because $u$ was chosen to be a normalized eigenfunction.
This will provide the denominator bound.

Now for the numerator: For $(a,b) \in \overline{G}^*$, set $v_{(a,b)}(x,y) = v(a+x,b+y)$ and $u_{(a,b)}(x,y) = u(a+x,b+y)$. Also, we set $h = 1/\sqrt{n}$. Now fix some $g = (a,b) \in \overline{G}^*$. We need to control $v_g(h,0) - v_g(0,0)$ in terms of $u$. We will use integration by parts to rewrite $v_{g}(h,0) - v_{g}(0,0)$ as an integral of a bump function times a partial derivative of $u$. This will allow us to relate $\sum (v(a,b)-v(c,d))^2$ to $\int_{\mathbf{G}^*}|\nabla u|^2$. Define a bump function by
\begin{equation*}
    \psi(x) = \left\{
    \begin{array}{rl}
        x + \frac{h}{2} & \text{if } x \in [-\frac{h}{2},\frac{h}{2}]\\
        \frac{3h}{2} - x & \text{if } x \in [\frac{h}{2},\frac{3h}{2}]\\
        0 & \text{otherwise}
    \end{array} \right.
\end{equation*}
Extend $v$ to be zero on any mess points outside of $\overline{G}^*$. Recall that earlier we extended $u$ to be $0$ outside of its original domain. Then
\begin{lemm}\label{annoyingLemm}
\[\int_{\RR^2}|\nabla u|^2\ dxdy - \sum_{i \sim_{h\ZZ^2} j}(v(i)-v(j))^2 = \]
\[\frac{1}{h}\sum_{g\in h\ZZ^2}\int_{-\frac{h}{2}}^{\frac{3h}{2}}\int_{-\frac{h}{2}}^{\frac{h}{2}}\psi(x)\left(\frac{\pa u_g}{\pa x}(x,y) - \frac{1}{h}\left(v_g(h,0) - v_g(0,0)\right)\right)^2\ dxdy + \]
\[\frac{1}{h}\sum_{g\in h\ZZ^2}\int_{-\frac{h}{2}}^{\frac{3h}{2}}\int_{-\frac{h}{2}}^{\frac{h}{2}}\psi(y)\left(\frac{\pa u_g}{\pa y}(x,y) - \frac{1}{h}\left(v_g(0,h) - v_g(0,0)\right)\right)^2\ dydx\]
\end{lemm}
\begin{proof}This is a quite messy computation which we relegate to the appendix. We strongly encourage the first (or second) time reader to skip this proof.
\end{proof}
This implies
\[\sum_{i \sim_{h\ZZ^2} j}(v(i)-v(j))^2 \leq \int_{\RR^2}|\nabla u|^2\ dxdy\]
This successfully bounds the numerator of the Rayleigh quotient and we now have
\[\lam_D(G) = \lam_D(G^*) \leq \frac{\sum_{(a,b)\sim (c,d)}(v(a,b) - v(c,d))^2}{\sum_{(a,b)}v^2(a,b)} \leq \frac{\lam(\mathbf{G}^*)}{n-\frac{\lam(\mathbf{G}^*)}{\pi^2}} = \frac{\pi^2\lam(\mathbf{G}^*)}{\pi^2n-\lam(\mathbf{G}^*)}\]
\end{proof}
\subsection{A Lower Bound for $\lam_D$}
Here we will again make quite minor modifications to ideas presented in \cite{n7}, \cite{n3}, and \cite{n4}. As in the previous section we consider $\overline{G}^*$ and the domain $\mathbf{G}^*$. Recall that for a bounded open domain $\Omega \subset \RR^2$ we defined $B^{\ell_1}_{\ep}(\Omega)$ to be the interior of the set of all points with $\ell_1$ distance less than $\ep$ to $\Omega$. We have $\overline{G}^* \subset B^{\ell_1}_{2/\sqrt{n}}\left(\mathbf{G}^*\right)$.
Our strategy in this section is similar to the previous one. We will start with a principle eigenfunction for the graph and produce an approximate eigenfunction for $B^{\ell_1}_{2/\sqrt{n}}\left(\mathbf{G}^*\right)$. Then plugging everything into the relevant Rayleigh quotient will give us
\begin{prop}
\[\lam_D(G) \geq \frac{\lam\left(B^{\ell_1}_{2/\sqrt{n}}\left(\mathbf{G}^*\right)\right)}{n+(5/12)\lam\left(B^{\ell_1}_{2/\sqrt{n}}\left(\mathbf{G}^*\right)\right)}\]
\end{prop}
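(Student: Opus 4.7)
The plan is to mirror the strategy of Theorem \ref{upperBound} in reverse: begin with a principle eigenfunction $\phi$ of $G$ (normalized so $\sum_g\phi(g)^2=1$, so that $\sum_{i\sim_{\overline G}j}(\phi(i)-\phi(j))^2 = \lam_D(G)$ by Theorem \ref{rayleighQuotientD}), build from it a trial function $u \in H^1_0(\Omega)$ where $\Omega = B^{\ell_1}_{2/\sqrt n}(\mathbf{G}^*)$, and apply the variational inequality $\lam(\Omega) \leq \int_\Omega |\nabla u|^2 / \int_\Omega u^2$. Two estimates will do the job:
\[\int_\Omega |\nabla u|^2 \leq \lam_D(G), \qquad \int_\Omega u^2 \geq \frac{1}{n} - \frac{5}{12n}\,\lam_D(G).\]
Substituting both into the Rayleigh quotient bound and rearranging algebraically yields the stated inequality.

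For the construction of $u$, I would scale $\phi$ down to $\overline{G}^* = (1/\sqrt n)\overline G$, extend by zero to all of $(1/\sqrt n)\ZZ^2$, fix a triangulation by splitting each lattice square along (say) its northwest--southeast diagonal, and take $u$ to be the continuous piecewise-affine function with the prescribed vertex values. Since $\phi$ vanishes on $\pa G$ and beyond, $u$ is identically zero on every triangle none of whose vertices lie in $G^*$. Every vertex of a triangle touching $G^*$ lies within $\ell_1$-distance $2/\sqrt n$ of a point of $\overline{\mathbf{G}^*}$, and hence so does every point of the triangle (by convexity of the $\ell_1$ norm), placing the support of $u$ inside $\overline{B^{\ell_1}_{2/\sqrt n}(\mathbf{G}^*)}$ so that $u \in H^1_0(\Omega)$.

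The gradient estimate is the clean part. On each right-angled triangle of the triangulation $\nabla u$ is constant, and an elementary computation in coordinates aligned with the two legs gives $\int_T |\nabla u|^2 = \frac{1}{2}\bigl((b-a)^2 + (c-a)^2\bigr)$, where $a$ is the value at the right-angle vertex and $b,c$ are the values at the endpoints of the two legs; notice that the hypotenuse (the lattice diagonal) does not contribute. Summing over all triangles, each lattice edge of $(1/\sqrt n)\ZZ^2$ is the leg of exactly two triangles, and the two halves combine to yield $\int_\Omega |\nabla u|^2 = \sum_{i\sim_{\overline G}j}(\phi(i)-\phi(j))^2 = \lam_D(G)$ exactly.

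The lower bound on $\int_\Omega u^2$ is the main obstacle, and is where the constant $5/12$ is born. Starting from the identity $\int_T u^2 = (|T|/6)(a^2+b^2+c^2+ab+bc+ca)$ for an affine function on a triangle, summing over all triangles and applying $2\phi(i)\phi(j) = \phi(i)^2+\phi(j)^2-(\phi(i)-\phi(j))^2$ on each cross term produces
\[\int_\Omega u^2 = \frac{1}{n}\sum_g \phi(g)^2 - \frac{1}{12n}\bigl(L + D\bigr),\]
where $L = \sum_{i\sim_{\overline G}j}(\phi(i)-\phi(j))^2 = \lam_D(G)$ and $D$ is the analogous sum over the diagonal edges of the triangulation. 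Cauchy--Schwarz applied to a two-step lattice path gives $(\phi(i)-\phi(j))^2_{\text{diag}} \leq 2\bigl((\Delta\phi)_1^2 + (\Delta\phi)_2^2\bigr)$ in terms of two lattice edges, and a counting argument (each lattice edge is used by at most two diagonal paths under a consistent choice of paths) yields $D \leq 4L$, so $\int_\Omega u^2 \geq \frac{1}{n} - \frac{5L}{12n}$ as required. The delicate point is the combinatorial bookkeeping: one has to track how many triangles meet at each vertex, handle edge contributions from triangles partially outside $\overline{G}^*$ (where $\phi$ vanishes at some vertices), and choose the two-step lattice paths so that each lattice edge indeed serves at most twice.
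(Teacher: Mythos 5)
Your proposal is correct and follows the paper's argument essentially step for step: triangulate $h\ZZ^2$ with NW--SE diagonals, take the piecewise-affine interpolant of the discrete principle eigenfunction, verify that $\int|\nabla u|^2$ equals $\lambda_D(G)$ exactly because each lattice edge is a leg of two triangles, and lower-bound $\int u^2$ by writing the triangle cross-terms as $\phi_i\phi_j = \tfrac12(\phi_i^2+\phi_j^2-(\phi_i-\phi_j)^2)$ and then controlling the diagonal differences by the lattice differences to get $L + D \le 5L$. (Two small notes, neither affecting validity: your per-triangle formula $\tfrac{|T|}{6}(a^2+b^2+c^2+ab+bc+ca)$ is the correct one --- the paper's printed version drops the $bc$ term but uses the correct edge-count in the vertex sum --- and with a truly consistent choice of triangle per diagonal each lattice edge is actually used exactly once, giving the sharper $D \le 2L$; the weaker ``at most two'' you invoke still yields the paper's $5/12$.)
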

\begin{proof}
We start by adding edges to the mesh by drawing lines of slope $-1$ through every mesh point as in the following picture
    \begin{center}
\begin{tikzpicture}
[interior/.style={circle,draw=black,fill=black, inner sep=0pt,minimum size = 2.5mm},
 boundary/.style={circle,draw=black,fill=black!60, inner sep=0pt,minimum size = 2.5mm},
 exterior/.style={circle,draw=black,fill=white, inner sep=0pt,minimum size = 2.5mm},
 highlight/.style = {circle,draw= red, fill = red, inner sep=0pt, minimum size = 2.5mm}]
\draw[step=.5cm] (-.99,-.99) grid (.99,.99);
\draw (0,1.2) node {y};
\draw (1.2,0) node {x};
\draw[red] (-1.75,.75) -- (.75,-1.75);
\draw[red] (-1.5,1) -- (1,-1.5);
\draw[red] (-1.25,1.25) -- (1.25,-1.25);
\draw[red] (-1,1.5) -- (1.5,-1);
\draw[red] (-.75,1.75) -- (1.75,-.75);
\end{tikzpicture}
\end{center}
If we want to include these added edges we will refer to the subgraph as $G_{T}^*$ where the $T$ stands for triangle.
Let $u$ be a normalized eigenfunction for $G^*$. Extend $u$ to be $0$ everywhere it is not defined. We will define a continuous and piecewise differentiable function $v$ vanishing on the boundary of $B^{\ell_1}_{2/\sqrt{n}}\left(\mathbf{G}^*\right)$. Choose a triangle in the mesh oriented like
\begin{center}
\begin{tikzpicture}
[interior/.style={circle,draw=black,fill=black, inner sep=0pt,minimum size = 2.5mm},
 boundary/.style={circle,draw=black,fill=black!60, inner sep=0pt,minimum size = 2.5mm},
 exterior/.style={circle,draw=black,fill=white, inner sep=0pt,minimum size = 2.5mm},
 highlight/.style = {circle,draw= red, fill = red, inner sep=0pt, minimum size = 2.5mm}]
\draw (0,0) -- (0,1) -- (1,0) -- (0,0);
\draw (-.2,0) node {$a$};
\draw (0,1.2) node {$b$};
\draw (1.2,0) node {$c$};
\end{tikzpicture}
\end{center}
Let $(s,t)$ be $x$ and $y$ coordinates on the triangle. Then define
\[v(s,t) = \sqrt{n}(u(c)-u(a))s + \sqrt{n}(u(b)-u(a))t + u(a)\]
On the other triangles, define $v$ in the obvious way. It is clear that $v$ is continuous and piecewise differentiable. Also, we note that $v$ vanishes on the boundary of $B^{\ell_1}_{2/\sqrt{n}}\left(\mathbf{G}^*\right)$. Thus we have
\[\lam\left(B^{\ell_1}_{2/\sqrt{n}}\left(\mathbf{G}^*\right)\right) \leq \frac{\int_{B^{\ell_1}_{2/\sqrt{n}}\left(\mathbf{G}^*\right)} |\nabla v|^2}{\int_{B^{\ell_1}_{2/\sqrt{n}}\left(\mathbf{G}^*\right)} v^2}\]
It is immediately clear that
\[\int_{B^{\ell_1}_{2/\sqrt{n}}\left(\mathbf{G}^*\right)} |\nabla v|^2 = \sum_{i\sim_{\overline{G}^*} j} (u(i)-u(j))^2 = \lam_D(G)\]
The denominator bound is a little more tricky:
\[\int_{B^{\ell_1}_{2/\sqrt{n}}\left(\mathbf{G}^*\right)} v^2 = \]
\[\sum_{\text{triangles } abc}\int_0^{1/\sqrt{n}}\int_0^{1/\sqrt{n} - t}(\sqrt{n}(u(c)-u(a))s + \sqrt{n}(u(b)-u(a))t + u(a))^2\ dsdt =\]
\[\sum_{\text{triangles } abc} \frac{u(a)^2+u(b)^2+u(c)^2 + u(a)u(b) + u(a)u(c)}{12n}\]
Let $h = 1/\sqrt{n}$ and extend $v$ to be $0$ everywhere it is not defined. After noting that each edge lies in two triangles and each vertex lies in six triangles, we see that the above equality gives
\[\int_{B^{\ell_1}_{2/\sqrt{n}}\left(\mathbf{G}^*\right)} v^2 = \]
\[\sum_{(a,b)} \frac{u(a,b)}{12n}(6u(a,b) + u(a,b+h) + u(a-h,b+h)\]
\[ + u(a-h,b) + u(a,b-h) + u(a+h,b-h) + u(a+h,b)) = \]
\[\sum_{(a,b)} u^2(a,b)/n + \frac{u(a,b)}{12n}\Big((u(a,b+h)-u(a,b)) + (u(a-h,b+h) - u(a,b))\]
\[ + (u(a-h,b)-u(a,b)) + (u(a,b-h)-u(a,b))\]
\[ + (u(a,+h,b-h)-u(a,b)) + (u(a+h,b)-u(a,b))\Big) = \]
\[1/n - \frac{1}{12n}\sum_{i\sim_{\overline{G}^*_T} j}(u(i)-u(j))^2 = \]
\[1/n - \frac{\lam_D(G)}{12n} - \frac{1}{12n}\sum_{i\sim_{\overline{G}^*_T} j\text{ and } i \not\sim_{\overline{G}^*} j}(u(i)-u(j))^2\]

The last term involves differences of the eigenfunction evaluated on opposite vertices of the lattice squares. Consider the triangle in the diagram above with vertices $a$,$b$, and $c$. We need to control $-(u(b) - u(c))^2$. We have
\begin{align*}
-(u(b)-u(c))^2 &= -([u(b) - u(a)] + [u(a) - u(c)])^2\\
               &= -(u(b)-u(a))^2 - (u(a)-u(c))^2 - 2[u(b)-u(a)][u(a)-u(c)]\\
               &\geq -2(u(b)-u(a))^2 - 2(u(a)-u(c))^2
\end{align*}
where we used the AM-GM inequality\footnote{$xy \leq \frac{x^2+y^2}{2}$} in the last step.
This implies that
\[1/n - \frac{\lam_D(G)}{12n} - \frac{1}{12n}\sum_{i\sim \overline{G}^*_T j\text{ and } i \not\sim_{\overline{G}^*} j}(u(i)-u(j))^2 \geq\]
\[1/n - \frac{5}{12n}\lam_D(G)\]
Thus we have bounded the denominator of the Rayleigh quotient
\[\int_{B^{\ell_1}_{2/\sqrt{n}}\left(\mathbf{G}^*\right)}v^2 \geq 1/n - \frac{5}{12n}\lam_D(G)\]
Plugging everything into the relevant Rayleigh quotient now gives
\[\lam\left(B^{\ell_1}_{2/\sqrt{n}}\left(\mathbf{G}^*\right)\right) \leq \frac{\lam_D(G)}{1/n - (5/12n)\lam_D(G)}\]
Rearranging this gives
\[\lam_D(G) \geq \frac{\lam\left(B^{\ell_1}_{2/\sqrt{n}}\left(\mathbf{G}^*\right)\right)}{n+(5/12)\lam\left(B^{\ell_1}_{2/\sqrt{n}}\left(\mathbf{G}^*\right)\right)}\]
\end{proof}
This concludes the proof of Theorem \ref{approxTheo}.
\section{Conclusion of Proof}
Let $\{G_n\}$ be any sequence of minimizing subgraphs with $|G_n| = n$. As per the proof outline in the introduction, all that we need to complete the proof of Theorem \ref{mainTheo} are the following two lemmas:
\begin{lemm}The symmetric difference of the $B^{\ell_1}_{2/\sqrt{n}}\left(\mathbf{G_n}^*\right)$ and $\mathbf{G_n}^*$ converges to $0$ as $n \to\infty$.
\end{lemm}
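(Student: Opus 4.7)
The plan is to prove this by showing that the Euclidean perimeter of $\mathbf{G_n}^*$ is uniformly bounded in $n$, and then translating that into a bound on the area of the $\ell_1$-collar. Since $\mathbf{G_n}^* \subset B^{\ell_1}_{2/\sqrt{n}}(\mathbf{G_n}^*)$, the symmetric difference equals the area of the annulus $B^{\ell_1}_{2/\sqrt{n}}(\mathbf{G_n}^*) \setminus \mathbf{G_n}^*$, so it suffices to show that this area tends to $0$.

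First I would establish the key combinatorial perimeter estimate: if $\partial^e G_n$ denotes the set of $\ZZ^2$-edges from $G_n$ to $\pa G_n$, then $|\partial^e G_n| \leq 4C\sqrt{n}$. By Proposition \ref{stronglyConnected}, every non-empty horizontal slice (row) and every non-empty vertical slice (column) of a minimizing subgraph is a contiguous interval. Consequently each non-empty column contributes exactly two horizontal boundary lattice edges (at the top and bottom of the column), and each non-empty row contributes exactly two vertical boundary lattice edges (at its left and right ends). Proposition \ref{diamBound} says $G_n$ fits inside a box whose sides have length at most $C\sqrt{n}$, so the numbers of non-empty rows and columns are each at most $C\sqrt{n}$. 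Rescaling by $1/\sqrt{n}$ gives that the Euclidean perimeter of $\mathbf{G_n}^*$ satisfies $P(\mathbf{G_n}^*) = |\partial^e G_n|/\sqrt{n} \leq 4C$, uniformly in $n$.

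Next I would use this perimeter bound to control the $\ell_1$-neighborhood. Since $\vv x\vv_\infty \leq \vv x\vv_1$, we have the inclusion $B^{\ell_1}_{\ep}(\mathbf{G_n}^*) \subset B^{\ell_\infty}_{\ep}(\mathbf{G_n}^*)$ for every $\ep > 0$. The $\ell_\infty$-neighborhood of a rectilinear region is the Minkowski sum with an axis-aligned square of side $2\ep$, which increases the area by at most $\ep \cdot P(\mathbf{G_n}^*) + 4\ep^2 K_n$, where $K_n$ is the number of convex corners along $\partial\mathbf{G_n}^*$. Corners occur only at endpoints of boundary lattice edges, so $K_n = O(\sqrt{n})$. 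Substituting $\ep = 2/\sqrt{n}$ and $P(\mathbf{G_n}^*) \leq 4C$ gives
\[
\bigl|B^{\ell_1}_{2/\sqrt{n}}(\mathbf{G_n}^*) \setminus \mathbf{G_n}^*\bigr| \leq \frac{8C}{\sqrt{n}} + O\!\left(\frac{1}{\sqrt{n}}\right) \longrightarrow 0.
\]

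The entire argument rests on the perimeter bound, and the main point is that the minimality of $G_n$ forces it to be strongly connected and walled-in (no thin protrusions or pierced holes), so that $|\partial^e G_n|$ scales like $\sqrt{n}$ rather than $n$. Once this is in hand, comparing $\ell_1$- and $\ell_\infty$-neighborhoods and applying the Minkowski-sum area estimate for rectilinear sets is routine geometric bookkeeping; no further analytic input is required beyond what has already been developed in Sections 5 and 6.
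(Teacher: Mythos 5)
Your proposal is correct and follows essentially the same route as the paper: both arguments derive a uniform perimeter bound $P(\mathbf{G_n}^*) \leq 4C$ from strong connectivity (Proposition \ref{stronglyConnected}) together with the diameter bound (Proposition \ref{diamBound}), and then bound the area of the collar by (perimeter) $\times$ (width) plus lower-order corner terms. The only cosmetic differences are that you count the two boundary segments per row and per column explicitly and pass through the $\ell_\infty$-neighborhood via a Minkowski-sum estimate, whereas the paper covers the $\ell_1$-collar directly by neighborhoods $B_e$ of the individual boundary edges.
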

\begin{proof}Choose $C > 0$ such that $\mathbf{G_n}^* $ is contained in a square of side length $C$ for all $n$. Since $\mathbf{G_n}^*$ is a union of squares and all horizontal and vertical paths are continuous, we conclude that the perimeter of $\mathbf{G_n}^*$ is less than $4C$ for all $n$. The edge of a square in $\mathbf{G_n}^*$ has length $1/\sqrt{n}$. For an edge $e$ on the boundary of $\mathbf{G_n}^*$, let $B_e = \{x \in \RR^2\nin \mathbf{G_n}: \vv x - e_0\vv_{\ell_1} \leq 2/\sqrt{n}\text{ for some }e_0 \in e\}$. Simple geometric considerations imply that $|B_e| \leq M/n$ for some fixed constant $M > 0$. Since the perimeter of $\mathbf{G_n}^*$ is less than $4C$, there are at most $4C\sqrt{n}$ edges on the boundary of $\mathbf{G_n}^*$. Since $B^{\ell_1}_{2/\sqrt{n}}\left(\mathbf{G_n}^*\right) \nin \mathbf{G_n}^* = \cup_{e \in \pa \mathbf{G_n}^*} B_e$, we conclude that
\[|B^{\ell_1}_{2/\sqrt{n}}\left(\mathbf{G_n}^*\right) \nin \mathbf{G_n}^*| \leq \frac{4CM}{\sqrt{n}} \to 0 \text{ as } n\to\infty\]
\end{proof}
\begin{lemm}The sequence $\left\{\lam\left(B^{\ell_1}_{2/\sqrt{n}}\left(\mathbf{G_n}^*\right)\right)\right\}$ is uniformly bounded.
\end{lemm}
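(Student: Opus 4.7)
The plan is to bound $\lam(B^{\ell_1}_{2/\sqrt{n}}(\mathbf{G_n}^*))$ by comparing $G_n$ with the explicit sequence $\{D_n\}$ constructed in Appendix I, which by hypothesis satisfies $|D_n|=n$ and $\lam(\mathbf{D_n}^*) \to \lam(D)$. In particular, the sequence $\{\lam(\mathbf{D_n}^*)\}$ is bounded, so fix $M$ with $\lam(\mathbf{D_n}^*) \leq M$ for every $n$. Since $G_n$ is a minimizing subgraph, $\lam_D(G_n) \leq \lam_D(D_n)$, and we may feed this into the two inequalities of Theorem \ref{approxTheo}: the lower bound applied to $G_n$ and the upper bound applied to $D_n$.

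Write $\alpha_n := \lam(B^{\ell_1}_{2/\sqrt{n}}(\mathbf{G_n}^*))$ and $\beta_n := \lam(\mathbf{D_n}^*) \leq M$. Then for all $n$ with $n > CM$ (so the right-hand side of Theorem \ref{approxTheo} is positive), we have
\[
\frac{\alpha_n}{n + C\alpha_n} \;\leq\; \lam_D(G_n) \;\leq\; \lam_D(D_n) \;\leq\; \frac{\beta_n}{n - C\beta_n} \;\leq\; \frac{M}{n - CM}.
\]
Cross-multiplying (both denominators are positive) gives $\alpha_n(n - CM) \leq M(n + C\alpha_n)$, which rearranges to $\alpha_n(n - 2CM) \leq Mn$. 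For all sufficiently large $n$ (say $n \geq 4CM$) we conclude $\alpha_n \leq \frac{Mn}{n-2CM} \leq 2M$. Thus $\{\alpha_n\}$ is bounded on the tail, and adjoining the finitely many remaining terms (each of which is a finite number, as $B^{\ell_1}_{2/\sqrt{n}}(\mathbf{G_n}^*)$ is a bounded open set) yields uniform boundedness of the entire sequence.

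There is essentially no obstacle here: the argument is a purely algebraic manipulation of the two-sided bound from Theorem \ref{approxTheo}, combined with existence of a competitor sequence $\{D_n\}$ whose continuous eigenvalues converge. The only conceptual point worth flagging is that one must verify positivity of $n - C\beta_n$ and $n - 2CM$ before cross-multiplying, but this is automatic for $n$ large. The initial finite index range is handled trivially since each individual $\alpha_n$ is finite.
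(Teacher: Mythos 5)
Your argument is correct, and it takes a genuinely different route from the paper. The paper first reduces to bounding $\lam(\mathbf{G_n}^*)$ (via domain monotonicity $\mathbf{G_n}^* \subset B^{\ell_1}_{2/\sqrt{n}}(\mathbf{G_n}^*)$), then argues by contradiction: if a subsequence of eigenvalues blew up, the corresponding scaled domains could contain no $\delta\times\delta$ square, yet the walled-in structure of minimizing subgraphs together with the uniform diameter bound forces $|\mathbf{G_{n_k}}^*| \to 0$ as $\delta\to 0$, contradicting $|\mathbf{G_{n_k}}^*|=1$. Your proof instead bypasses all of this geometry: you pit $G_n$ against the explicit competitor $D_n$ from Appendix I, apply the lower bound of Theorem~\ref{approxTheo} to $G_n$ and the upper bound to $D_n$, chain through $\lam_D(G_n)\le\lam_D(D_n)$, and solve the resulting algebraic inequality for $\alpha_n$. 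This is shorter and arguably cleaner, since it uses only Theorem~\ref{approxTheo} and Appendix~I rather than Propositions~\ref{walled-in} and~\ref{diamBound}. In fact your estimate $\alpha_n \le \tfrac{Mn}{n-2CM}$ already yields $\limsup_n \alpha_n \le M$ for any $M>\sup_n \lam(\mathbf{D_n}^*)$, hence $\limsup_n \alpha_n \le \lam(D)$, which is the very limsup bound the introduction derives as a separate subsequent step; so your approach compresses two stages of the paper's conclusion into one. The one thing the paper's proof buys that yours does not is a geometric picture (minimizing subgraphs cannot be arbitrarily thin), but for the purposes of this lemma your route is entirely adequate and correctly handles the positivity of denominators and the finitely many small-$n$ terms.
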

\begin{proof} Since $\mathbf{G_n}^* \subset B^{\ell_1}_{2/\sqrt{n}}\left(\mathbf{G_n}^*\right)$ it suffices to show that $\lam\left(\mathbf{G_n}^*\right)$ is uniformly bounded. For the sake of contradiction, assume that we have a subsequence $\{G_{n_k}\}$ with $\lim_{k\to\infty}\lam\left(\mathbf{G_{n_k}}^*\right) = \infty$. Recall that the the first eigenvalue of a square with side length $\delta$ is $2\pi^2/\delta^2$. Hence, for any $\delta$ we can find $K$ such that $k \geq K$ implies that $\mathbf{G_{n_k}}^*$ contains no square with side length $\delta$. Previously we showed that there always exist horizontal and vertical paths in $G_{n_k}$ that ``wall-in'' $G_{n_k}$. Thus we can find a horizontal segment $s_1$ and a vertical segment $s_2$ in $\mathbf{G_{n_k}}^*$ such that $\mathbf{G_{n_k}}^*$ lies in the rectangle determined by $s_1$ and $s_2$. All horizontal and vertical paths in $\mathbf{G_{n_k}}^*$ must be continuous. Thus, if no square with side length $\delta$ lies in $\mathbf{G_{n_k}}^*$, every point in $\mathbf{G_{n_k}}^*$ must have $\ell_1$ distance less than $\delta$ to either $s_1$ or $s_2$. In turn this implies that $1 = |\mathbf{G_{n_k}}^*| \leq 2s_1\delta+2s_2\delta$. This gives $\max(s_1,s_2) \geq 1/4\delta$. But, $\max(s_1,s_2)$ is less than the diameter of $\mathbf{G_{n_k}}^*$, which is uniformly bounded. Taking $\delta \to 0$ gives a contradiction.
\end{proof}
\section{Final Remarks}
In \cite{n15}, versions of Melas' stability theorem are generalized to domains in $\RR^d$. Hence, the main difficulties in generalizing Theorem \ref{mainTheo} to $\ZZ^d$ most likely consist of notational headaches and integrating on $d$ dimensional simplicies (Section \ref{analysis}).

However, when this project was started, the original goal of the author was the following:
\begin{conj}\label{mainConj}
Let $\{G_n\}$ be any sequence of subgraphs in $\ZZ^d$ such that $|G_n| = n$ and $\lam_D(G_n) = \lam_D^{(n)}$. Let $D \subset \RR^d$ denote the unit ball. Then, after possibly translating the $G_n$, the Hausdorff distance of $\mathbf{G_n}^*$ and $D$ converges to $0$ as $n\to\infty$.
\end{conj}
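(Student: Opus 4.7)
The plan is to attack Conjecture \ref{mainConj} in two stages: first extend Theorem \ref{mainTheo} to $\ZZ^d$ in symmetric-difference measure, and then upgrade this convergence to Hausdorff distance.

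\textbf{Stage 1 (measure convergence in $\ZZ^d$).} This is largely notational. One replaces $L_D = 4I - A$ by $L_D = 2dI - A$; generalises horizontal and vertical symmetrisation to symmetrisation along each of the $d$ coordinate hyperplanes, and diagonal symmetrisation to each pair of coordinates; replaces squares in Theorem \ref{approxTheo} by $d$-cubes, with the Poincar\'e constant for a cube and a piecewise-linear interpolation on a simplicial subdivision of each cube; and invokes the $\RR^d$ version of Melas stability from \cite{n15}. The same end-game as in the introduction then gives that, after translation, the symmetric difference of $\mathbf{G_n}^*$ and $D$ tends to zero.

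\textbf{Stage 2 (upgrade to Hausdorff).} Hausdorff convergence decomposes into a gap direction and a protrusion direction. The gap direction $\sup_{q \in D} d(q, \mathbf{G_n}^*) \to 0$ is immediate from Stage 1: an open ball of radius $\epsilon$ contained in $D \setminus \mathbf{G_n}^*$ would contribute $c_d \epsilon^d$ to the symmetric difference. The protrusion direction $\sup_{p \in \mathbf{G_n}^*} d(p, D) \to 0$ is the substantive issue, since thin tentacles have negligible measure but arbitrary Hausdorff reach. My plan uses two ingredients. First, the generalised walled-in property of Proposition \ref{walled-in} (which in $\ZZ^d$ asserts, for each coordinate $x_i$, the existence of a hyperplane slice $\{x_i = h_i\}$ that projects onto every other parallel slice) precludes any coordinate-aligned tentacle: such a tentacle would force a positive-measure axis-aligned slab into $\mathbf{G_n}^* \setminus D$, contradicting Stage 1. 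Second, for tentacles in arbitrary directions, I would use a local surgery argument. Along a thin tentacle of degree-two vertices $p_1, \dots, p_k$ the principal eigenfunction $f$ satisfies the recurrence $(2d - \lam_D) f(p_i) = f(p_{i-1}) + f(p_{i+1})$, with characteristic roots $r_\pm = \bigl((2d - \lam_D) \pm \sqrt{(2d - \lam_D)^2 - 4}\bigr)/2$ and hence exponential decay toward the tip. Tentacle vertices therefore contribute negligibly to $\sum f^2$, and deleting them and re-attaching to the bulk boundary (where $f$ is $\Theta(n^{-1/2})$ by a Harnack-type estimate) should strictly decrease the Rayleigh quotient $R_{G_n}$, contradicting minimality of $G_n$.

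\textbf{Main obstacle.} The hard step is making the surgery argument quantitative in general dimension. One needs: a uniform lower bound on the principal eigenfunction on the bulk, amounting to a Harnack estimate for the combinatorial Dirichlet Laplacian on subgraphs of bounded scaled diameter; an explicit tentacle-detection procedure separating bulk vertices from protrusions; and a clean book-keeping of the Rayleigh-quotient change under re-attachment. The coordinate walled-in property rules out tentacles along axis directions, and diagonal symmetrisation (generalised to every pair of coordinates) kills many more, but residual tentacles along irrational directions fall only to the surgery argument, and carrying out these quantitative eigenfunction estimates uniformly in $d$ is where the bulk of the work will lie.
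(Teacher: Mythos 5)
This statement is an open conjecture in the paper, not a theorem: the paper proves only measure convergence of the symmetric difference (Theorem \ref{mainTheo}, in $\ZZ^2$) and explicitly states that a direct attack on Conjecture \ref{mainConj} along the same lines fails because the Faber--Krahn inequality is not stable in Hausdorff distance for non-convex domains. Your proposal is therefore not being measured against a proof in the paper, and, as you yourself acknowledge under ``Main obstacle,'' it is a research plan with an open core step rather than a proof. Your Stage 2 surgery idea is essentially the second route the paper itself sketches in its Final Remarks (compare eigenfunction values on the tube-part against the ball-part and exploit the Rayleigh quotient), and the paper reports that the required quantitative estimates have proved elusive.

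Beyond the acknowledged gap, two specific claims in Stage 2 are wrong or unjustified. First, the walled-in property does \emph{not} preclude coordinate-aligned tentacles: if a minimizer consists of a ball with a single-vertex-wide horizontal tail at height $y_0$, the horizontal slice at $y_0$ is the longest one and can itself serve as the slice that walls in all others, so no positive-measure slab is forced into $\mathbf{G_n}^* \setminus D$. The paper says exactly this: none of its geometric results preclude ``balls with long thin tails,'' including axis-aligned ones. Second, the surgery step is not a routine Rayleigh-quotient comparison. When you delete the tentacle vertices $p_1,\dots,p_k$ from the attachment point $q$, the edge term $(f(q)-f(p_1))^2$ in the numerator is replaced by $(f(q)-0)^2$, which is \emph{larger} whenever $0 < f(p_1) < 2f(q)$; so removing a tentacle can increase the local Dirichlet energy, and whether the net Rayleigh quotient decreases after re-attaching $k$ vertices to the bulk depends on precisely the uniform lower (Harnack-type) bound on $f$ in the bulk that you list as missing. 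The exponential decay along the tentacle (your recurrence and roots $r_\pm$ are correct) controls only the far end of the tail, not the first few vertices near $q$, which carry the dominant contribution to both the energy gain and loss. Until these two points are repaired, the protrusion direction of Hausdorff convergence --- the entire content of the conjecture beyond Theorem \ref{mainTheo} --- remains open.
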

Hausdorff convergence is of course a much stronger requirement than asking that the symmetric difference has measure converging to $0$. Morally, the main problem is that Theorem \ref{mainTheo} (and all results in this paper concerning the geometry of minimizing subgraphs) do not preclude the possibility that minimizing subgraphs look like ``balls with long thin tails,'' i.e. balls with very thin tubes coming out. Theorem \ref{mainTheo} and the diameter bounds we have established only force these tubes to become thinner and thinner; so that, ``in the limit'' the $\mathbf{G_n}^*$'s become a ball with some line segments attached. Of course Conjecture \ref{mainConj} would preclude such tails.

A direct attack on Conjecture \ref{mainConj} along the lines of this paper will not work, because the Faber-Krahn inequality is \emph{not} stable with respect to Hausdorff distance unless we restrict ourselves to convex domains (balls with long thin tails again). One possibility is to prove that for every sequence of minimizing subgraphs $\{G_{n_k}\}$, $\{\mathbf{G_{n_k}}^*\}$ contains a Hausdorff convergent subsequence. Then some applications of Theorem \ref{strongFaberKrahn} would prove Conjecture \ref{mainConj}. However, such a compactness result has proved elusive. Another route is to leverage Theorem \ref{mainTheo} to analyze principle eigenfunctions of minimizing subgraphs and proceed from there. For example, with $n$ large, a minimizing subgraph $G_n$ has a ball-part and a tube-part which ``sticks out.'' Then one might try to show that on one hand, by approximation with eigenfunctions of the regular Laplacian, the values of the eigenfunction on the boundary of the ball-part are much larger than the values of the eigenfunction on the boundary of the tube-part. On the other hand, the Rayleigh quotient can be used to establish bounds on how much the eigenfunction can vary across the boundary of a minimizing subgraph.
\section{Acknowledgements}
I would like to thank Professor Mazzeo very much for guiding me through the honors thesis process. He ability to quickly decide (correctly) if an idea was worth pursing was extremely useful! He also displayed great amounts patience while I explained many half-baked ideas.
\section{Appendix I: Discrete Approximations of the Unit Disk}
We will construct subgraphs $\{D_n\}$ such that $|D_n| = n$ and $\lim_{n\to\infty}\lam(\mathbf{D_n}^*) \to \lam(D)$, where $D$ is the disk of area $1$. These $D_n$ can be thought of as discrete approximations to $D$. They are constructed inductively. Let $D_1 = \{(0,0)\}$. Now assume that $D_{n-1}$ has been constructed. Choose any $x$ such that
\[|x| = \min_{y \in \ZZ^2 - D_{n-1}}|y|\]
Then let $D_n$ be the subgraph with vertex set $D_{n-1} \cup \{x\}$.
It is clear that $|D_n| = n$. Hence, it only remains to show
\begin{prop}\label{discreteDisk}$\lam(\mathbf{D_n}^*) \to \lam(D)$ as $n\to\infty$.
\end{prop}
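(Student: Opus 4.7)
The plan is to sandwich $\mathbf{D_n}^*$ between two Euclidean disks whose radii both tend to the radius of $D$, and then conclude via domain monotonicity of the first Dirichlet eigenvalue together with its scaling law.

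First I would unpack the inductive construction. Since at each step we adjoin a lattice point of minimum Euclidean norm not already present, $D_n$ consists of the $n$ lattice points in $\ZZ^2$ of smallest norm (for some tie-breaking rule). Setting $M_n := \max_{y \in D_n}|y|$, this immediately gives
\[
\{y \in \ZZ^2 : |y| < M_n\} \;\subseteq\; D_n \;\subseteq\; \{y \in \ZZ^2 : |y| \leq M_n\}.
\]
The classical Gauss circle estimate $\#\{y \in \ZZ^2 : |y| \leq r\} = \pi r^2 + O(r)$ applied to both sides yields $\pi M_n^2 = n + O(M_n)$, hence $M_n = \sqrt{n/\pi} + O(1)$.

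Next I would pass from the lattice point set to the associated domain $\mathbf{D_n}$. Since $\mathbf{D_n}$ is the union of closed unit squares centered at the points of $D_n$, elementary geometry gives fixed constants $c_1, c_2 > 0$ with
\[
D(0, M_n - c_1) \;\subseteq\; \mathbf{D_n} \;\subseteq\; D(0, M_n + c_2).
\]
Scaling by $1/\sqrt{n}$ turns this into
\[
D\!\left(0, (M_n - c_1)/\sqrt{n}\right) \;\subseteq\; \mathbf{D_n}^* \;\subseteq\; D\!\left(0, (M_n + c_2)/\sqrt{n}\right),
\]
and both radii converge to $1/\sqrt{\pi}$, the radius of the unit-area disk $D$.

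Finally I would invoke monotonicity of $\lam(\cdot)$ under inclusion (immediate from the Rayleigh quotient by extension by zero) and the scaling law $\lam(D(0,R)) = \lam(D(0,1))/R^2$, which gives $\lam(D(0,R)) = \lam(D)/(\pi R^2)$ after substituting $\lam(D) = \pi\,\lam(D(0,1))$. The sandwich above implies
\[
\frac{\lam(D)}{\pi(M_n + c_2)^2/n} \;\leq\; \lam(\mathbf{D_n}^*) \;\leq\; \frac{\lam(D)}{\pi(M_n - c_1)^2/n},
\]
and since $\pi M_n^2/n \to 1$ by Step 1, the squeeze theorem yields $\lam(\mathbf{D_n}^*) \to \lam(D)$. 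There is no real obstacle here; the only point requiring mild care is the Gauss circle estimate, which is standard and could alternatively be replaced by an even weaker ad hoc bound (namely $M_n^2/n \to 1/\pi$), obtained by comparing $D_n$ to axis-aligned squares of side $\approx M_n$ inside and outside.
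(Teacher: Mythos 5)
Your proposal is correct and follows essentially the same route as the paper: both arguments sandwich $\mathbf{D_n}^*$ between two concentric disks whose radii converge to $1/\sqrt{\pi}$ via a Gauss-circle-type counting estimate, and then conclude using domain monotonicity of the first Dirichlet eigenvalue together with the scaling law $\lam(\alpha\Omega) = \lam(\Omega)/\alpha^2$. The only cosmetic difference is that you parametrize directly by the maximal norm $M_n$ in $D_n$, whereas the paper states its counting lemma in terms of disks of area $n$ and solves a small quadratic to produce the enclosing disk, but the two are interchangeable.
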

The proof of this relies on the following lemma due to Gauss.
\begin{lemm}\label{gLemm}Let $B_r \subset \RR^2$ be the disk of area $r$ centered at the origin. For any $n \in \ZZ$ set $O_n = B_n \cap \ZZ^2$. Then $|O_n| = n + O(\sqrt{n})$
\end{lemm}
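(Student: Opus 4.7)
The plan is to reduce this counting problem to a pure area computation by the familiar trick of attaching a unit square to each lattice point. Let $R = \sqrt{n/\pi}$ so that $B_n$ is the disk of radius $R$ centered at the origin. For each $z \in \ZZ^2$, let $Q_z$ denote the closed axis-aligned unit square centered at $z$. The collection $\{Q_z\}_{z \in \ZZ^2}$ tiles $\RR^2$ up to a set of measure zero, so for any finite $S \subset \ZZ^2$ the Lebesgue measure of $\bigcup_{z \in S} Q_z$ equals $|S|$.

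First I would apply this observation to $S = O_n$, writing $U = \bigcup_{z \in O_n} Q_z$ and noting $|O_n| = \text{area}(U)$. The key geometric input is that each $Q_z$ has diameter $\sqrt{2}$. Letting $B^+$ and $B^-$ denote the disks centered at the origin of radii $R + \sqrt{2}/2$ and $\max(R - \sqrt{2}/2, 0)$ respectively, I claim $B^- \subseteq U \subseteq B^+$. The upper inclusion is immediate: if $p \in Q_z$ with $z \in O_n$ then $|p| \leq |z| + |p - z| \leq R + \sqrt{2}/2$. For the lower inclusion, if $|p| \leq R - \sqrt{2}/2$ then the unique lattice point $z$ with $p \in Q_z$ satisfies $|z| \leq |p| + \sqrt{2}/2 \leq R$, so $z \in O_n$ and $p \in U$.

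Combining the inclusions with the area identity yields
\[\pi\left(R - \tfrac{\sqrt{2}}{2}\right)^2 \leq |O_n| \leq \pi\left(R + \tfrac{\sqrt{2}}{2}\right)^2\]
whenever $R \geq \sqrt{2}/2$, and expanding gives $|O_n| = \pi R^2 \pm \sqrt{2\pi}\, R + \pi/2 = n + O(\sqrt{n})$. There is no real obstacle here; the only minor subtlety is handling small $n$ (when $R < \sqrt{2}/2$), in which case the lower bound is vacuous and the $O(\sqrt{n})$ conclusion is absorbed into the implicit constant via the trivial upper bound alone. This completes the plan.
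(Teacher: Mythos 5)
Your proof is correct and follows essentially the same route as the paper: tile the plane with unit squares centered at lattice points, sandwich the resulting union between two concentric disks, and compare areas. In fact your version is slightly more careful than the paper's, which uses an offset of $1/2$ where the half-diagonal $\sqrt{2}/2$ is the correct constant, and you also handle the small-$n$ degenerate case explicitly; neither difference affects the $O(\sqrt{n})$ conclusion.
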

\begin{proof}For any $x \in \ZZ^2$, let $S_x$ denoted the square of area $1$ centered at $x$. The radius of $B_n$ is $\sqrt{n/\pi}$. For any $x \in B_n$, $S_x$ is contained in the disk of radius $\sqrt{n/\pi} + 1/2$. Hence $\mathbf{O_n}$ is contained in a disk of radius $\sqrt{n/\pi} + 1/2$. By construction, the area of $\mathbf{O_n}$ is equal to $|O_n|$. Thus we have
\[|O_n| \leq \pi(\sqrt{n/\pi} + 1/2)^2 = n + \sqrt{n\pi} + \pi/4\]
We can apply the same idea in reverse to get an inequality in the opposite direction. That is, let $y \in B_n$ and suppose that $y \not\in C_x$ for any $x \in O_n$. Since the squares $C_x$ for $x \in \ZZ^2$ completely tile $\RR^2$ we conclude that there exists $x \in \ZZ^2-B_n$ with $y \in S_x$. This implies that $y$ does not lie in the disk of radius $\sqrt{n/\pi}-1/2$ centered at the origin. Hence the disk of radius $\sqrt{n/\pi} - 1/2$ is completely covered by $\mathbf{O_n}$. This gives
\[|O_n| \geq \pi(\sqrt{n/\pi} - 1/2)^2 = n - \sqrt{n\pi} + \pi/4\]
\end{proof}
Recall that for any constant $\alpha > 0$ and a bounded open domain $\Omega$, $\lam(\alpha \Omega) = \lam(\Omega)/\alpha^2$. Hence, to prove Proposition \ref{discreteDisk}, it suffices to prove that $\frac{\lam(D)}{n+o(n)} \leq \lam(\mathbf{D_n}) \leq  \frac{\lam(D)}{n - o(n)}$.
\begin{proof}Via Lemma \ref{gLemm} choose $C > 0$ so that
\[\Big||B_n \cap \ZZ^2| - n\Big| \leq C\sqrt{n}\]
Then, for any $m$ such that $m - C\sqrt{m} \geq n$, we will have $D_n \subset B_m$. Solving the relevant quadratic equation reveals that we can take
\[m = \frac{4n + 2C^2 + 2C\sqrt{C^2+4n}}{4} = n + O(\sqrt{n})\]
The radius of $B_m$ is then
\[\sqrt{\frac{m}{\pi}} = \sqrt{\frac{n}{\pi}} + o(\sqrt{n})\]
Now let
\[r_1 = \sqrt{\frac{m}{\pi}} + 1/2 = \sqrt{\frac{n}{\pi}} + o(\sqrt{n})\]
$\mathbf{D_n}$ is contained in the disk of radius $r_1$. A completely analogous argument produces
\[r_2 = \sqrt{\frac{n}{\pi}} + o(\sqrt{n})\]
such that the ball of radius $r_2$ is contained in $\mathbf{D_n}$. Since $\Omega_1 \subset \Omega_2$ implies $\lam(\Omega_1) \geq \lam(\Omega_2)$, we get
\[\frac{\lam(D)}{n+o(n)} \leq \lam(\mathbf{D_n}) \leq  \frac{\lam(D)}{n - o(n)}\]
\end{proof}
\section{Appendix II: Proof of Lemma \ref{annoyingLemm}}
Our goal is to prove
\[\int_{\RR^2}|\nabla u|^2\ dxdy - \sum_{i \sim_{h\ZZ^2} j}(v(i)-v(j))^2 = \]
\[\frac{1}{h}\sum_{g\in h\ZZ^2}\int_{-\frac{h}{2}}^{\frac{3h}{2}}\int_{-\frac{h}{2}}^{\frac{h}{2}}\psi(x)\left(\frac{\pa u_g}{\pa x}(x,y) - \frac{1}{h}\left(v_g(h,0) - v_g(0,0)\right)\right)^2\ dxdy + \]
\[\frac{1}{h}\sum_{g\in h\ZZ^2}\int_{-\frac{h}{2}}^{\frac{3h}{2}}\int_{-\frac{h}{2}}^{\frac{h}{2}}\psi(y)\left(\frac{\pa u_g}{\pa y}(x,y) - \frac{1}{h}\left(v_g(0,h) - v_g(0,0)\right)\right)^2\ dydx\]
\begin{proof}
For any function $f$
\begin{align*}
\int_{-h/2}^{3h/2}\int_{-h/2}^{h/2}\psi(x)f(x,y)\ dxdy &= \\
\int_{-h/2}^{h/2}\left[\int_{-h/2}^{h/2}\left(x+\frac{h}{2}\right)f(x,y)\ dx + \int_{h/2}^{3h/2}\left(\frac{3h}{2}-x\right)f(x,y)\ dx\right]\ dy &=\\
\int_{-h/2}^{h/2}\int_{-h/2}^{h/2}\left(x+\frac{h}{2}\right)f(x,y)\ dxdy + \int_{-h/2}^{h/2}\int_{-h/2}^{h/2}\left(\frac{h}{2}-x\right)f(x-h,y)\ dxdy&
\end{align*}
Next
\begin{align*}
\frac{1}{h}\sum_{g\in h\ZZ^2}\int_{-h/2}^{3h/2}\int_{-h/2}^{h/2}\psi(x)\frac{\pa u_g}{\pa x}^2(x,y)\ dxdy &=\\
\frac{1}{h}\sum_{g \in h\ZZ^2}\Bigg[\int_{-h/2}^{h/2}\int_{-h/2}^{h/2}\left(x + \frac{h}{2}\right)\frac{\pa u_g}{\pa x}^2(x,y)\ dxdy &\\
+ \int_{-h/2}^{h/2}\int_{-h/2}^{h/2}\left(\frac{h}{2}-x\right)\frac{\pa u_g}{\pa x}^2(x-h,y)\ dxdy\Bigg]&=\\
\frac{1}{h}\sum_{i,j = -\infty}^{\infty}\Bigg[\int_{-h/2}^{h/2}\int_{-h/2}^{h/2}\left(x + \frac{h}{2}\right)\frac{\pa u}{\pa x}^2(ih+x,jh+y)\ dxdy &\\
+ \int_{-h/2}^{h/2}\int_{-h/2}^{h/2}\left(\frac{h}{2}-x\right)\frac{\pa u}{\pa x}^2((i-1)h+x,jh+y)\ dxdy\Bigg]&=\\
\frac{1}{h}\Bigg[\sum_{i,j = -\infty}^{\infty}\int_{-h/2}^{h/2}\int_{-h/2}^{h/2}\left(x + \frac{h}{2}\right)\frac{\pa u}{\pa x}^2(ih+x,jh+y)\ dxdy &\\
+ \sum_{i,j=-\infty}^{\infty}\int_{-h/2}^{h/2}\int_{-h/2}^{h/2}\left(\frac{h}{2}-x\right)\frac{\pa u}{\pa x}^2(ih+x,jh+y)\ dxdy\Bigg]&=\\
\frac{1}{h}\sum_{i,j=-\infty}^{\infty}\int_{-h/2}^{h/2}\int_{-h/2}^{h/2}h\frac{\pa u}{\pa x}^2(ih+x,jh+y)\ dxdy &=\\
\int_{\RR^2}\frac{\pa u}{\pa x}^2\ dxdy
\end{align*}
Similarly we have
\[\frac{1}{h}\sum_{g\in h\ZZ^2}\int_{-h/2}^{3h/2}\int_{-h/2}^{h/2}\psi(y)\frac{\pa u_g}{\pa y}^2(x,y)\ dydx = \int_{\RR^2}\frac{\pa u}{\pa y}^2\ dxdy\]
and
\[\frac{1}{h}\sum_{g \in h\ZZ^2}\int_{-h/2}^{3h/2}\int_{-h/2}^{h/2}\psi(x)\frac{(v_g(h,0)-v_g(0,0))^2}{h^2}\ dxdy + \]
\[\frac{1}{h}\sum_{g \in h\ZZ^2}\int_{-h/2}^{3h/2}\int_{-h/2}^{h/2}\psi(y)\frac{(v_g(0,h)-v_g(0,0))^2}{h^2}\ dydx = \]
\[\sum_{i\sim_{h\ZZ^2} j}(v(i)-v(j))^2\]
Integration by parts gives the following identity
\begin{align*}
v_g(h,0) - v_g(0,0) &= h^2\left(\int_{h/2}^{3h/2}\int_{-h/2}^{h/2}u_g\ dxdy - \int_{-h/2}^{h/2}\int_{-h/2}^{h/2}u_g\ dxdy\right) \\
&= h^2\int_{-h/2}^{h/2}\left(\int_{h/2}^{3h/2}u_g\ dx - \int_{-h/2}^{h/2}u_g\ dx\right)\ dy\\
&= -h^2\int_{-h/2}^{h/2}\left(\int_{h/2}^{3h/2}\psi'(x)u_g\ dx  - \int_{-h/2}^{h/2}\psi'(x)u_g\ dx\right)\ dy \\
&= h^2\int_{-h/2}^{h/2}\int_{-h/2}^{3h/2}\Bigg(\psi(x)\frac{\pa u_g}{\pa x}\ dx \\
& \ -\psi(3h/2)u_g(3h/2,y) + \psi(h/2)u_g(h/2,y) \\
& \ - \psi(h/2)u_g(h/2,y) + \psi(-h/2)u_g(-h/2,y)\ dy\Bigg) \\
&= h^2\int_{-h/2}^{3h/2}\int_{-h/2}^{h/2}\psi(x)\frac{\pa u_g}{\pa x}\ dxdy
\end{align*}
Thus
\begin{align*}
\frac{-2}{h^2}\sum_{g\in h\ZZ^2}\Bigg[\int_{-h/2}^{3h/2}\int_{-h/2}^{h/2}\psi(x)\frac{\pa u_g}{\pa x}(v_g(h,0)-v_g(0,0))\ dxdy + &\\
\int_{-h/2}^{3h/2}\int_{-h/2}^{h/2}\psi(y)\frac{\pa u_g}{\pa y}(v_g(0,h)-v_g(0,0))\ dydx \Bigg]&=\\
-2\sum_{i\sim_{h\ZZ^2} j}(v(i)-v(j))^2&
\end{align*}
Now we are ready to put everything together:
\begin{align*}
\frac{1}{h}\sum_{g\in h\ZZ^2}\int_{-\frac{h}{2}}^{\frac{3h}{2}}\int_{-\frac{h}{2}}^{\frac{h}{2}}\psi(x)\left(\frac{\pa u_g}{\pa x}(x,y) - \frac{1}{h}\left(v_g(h,0) - v_g(0,0)\right)\right)^2\ dxdy + &\\
\frac{1}{h}\sum_{g\in h\ZZ^2}\int_{-\frac{h}{2}}^{\frac{3h}{2}}\int_{-\frac{h}{2}}^{\frac{h}{2}}\psi(y)\left(\frac{\pa u_g}{\pa y}(x,y) - \frac{1}{h}\left(v_g(0,h) - v_g(0,0)\right)\right)^2\ dydx&=\\
\frac{1}{h}\sum_{g\in h\ZZ^2}\int_{-h/2}^{3h/2}\int_{-h/2}^{h/2}\psi(x)\frac{\pa u_g}{\pa x}^2(x,y)\ dxdy + &\\
\frac{1}{h}\sum_{g\in h\ZZ^2}\int_{-h/2}^{3h/2}\int_{-h/2}^{h/2}\psi(y)\frac{\pa u_g}{\pa y}^2(x,y)\ dydx + &\\
\frac{1}{h}\sum_{g \in h\ZZ^2}\int_{-h/2}^{3h/2}\int_{-h/2}^{h/2}\psi(x)\frac{(v_g(h,0)-v_g(0,0))^2}{h^2}\ dxdy + &\\
\frac{1}{h}\sum_{g \in h\ZZ^2}\int_{-h/2}^{3h/2}\int_{-h/2}^{h/2}\psi(y)\frac{(v_g(0,h)-v_g(0,0))^2}{h^2}\ dydx + &\\
\frac{-2}{h^2}\sum_{g\in h\ZZ^2}\Bigg[\int_{-h/2}^{3h/2}\int_{-h/2}^{h/2}\psi(x)\frac{\pa u_g}{\pa x}(v_g(h,0)-v_g(0,0))\ dxdy + &\\
\int_{-h/2}^{3h/2}\int_{-h/2}^{h/2}\psi(y)\frac{\pa u_g}{\pa y}(v_g(0,h)-v_g(0,0))\ dydx \Bigg] &=\\
\int_{\RR^2}\frac{\pa u}{\pa x}^2\ dxdy + \int_{\RR^2}\frac{\pa u}{\pa y}^2\ dxdy + &\\
 + \sum_{i\sim_{h\ZZ^2} j}(v(i)-v(j))^2 -2\sum_{i\sim_{h\ZZ^2} j}(v(i)-v(j))^2 &= \\
\int_{\RR^2}|\nabla u|^2\ dxdy - \sum_{i \sim_{h\ZZ^2} j}(v(i)-v(j))^2
\end{align*}
\end{proof}
\pagebreak

\end{document}